\DeclareFontFamily{U}{yswab}{}
\DeclareFontShape{U}{yswab}{m}{n}{<-> yswab}{}
\newtheorem{theorem}{Theorem}[section]
\newtheorem*{theorem*}{Theorem}
\newtheorem{proposition}{Proposition}[section]
\newtheorem{corollary}{Corollary}
\theoremstyle{definition}
\newtheorem{definition}{Definition}
\newtheorem{example}{Example}
\newtheorem{remark}{Remark}
\begin{document}

\title[Properties of Prabhakar-type Operators]{Some Properties of Prabhakar-type Fractional Calculus Operators}


\author{Federico Polito, \v{Z}ivorad Tomovski}

\address{Federico Polito, Dipartimento di Matematica \emph{G.~Peano}, Universit\`a degli Studi di Torino, Italy\\
\email{federico.polito@unito.it}}

\address{\v{Z}ivorad Tomovski, Department of Mathematics, Sts.\ Cyril and Methodius University, Skopje, Macedonia, Department of Mathematics, University of Rijeka, Croatia}


\CorrespondingAuthor{Federico Polito}


\date{22.09.2015}                               

\keywords{Prabhakar operators; fractional calculus; Opial inequalities; Generalized Mittag--Leffler distribution;
	Havriliak--Negami relaxation}

\subjclass{26D10, 26A33, 60G22}


\begin{abstract}

		In this paper we study some properties of the Prabhakar integrals and derivatives and of some of their extensions such as
	the regularized Prabhakar derivative or the Hilfer--Prabhakar derivative. Some Opial- and Hardy-type inequalities are derived.
	In the last section we point out on some relationships with probability theory.
			
%

\end{abstract}

\maketitle



	\section{Introduction and background}
	
		The aim of this note is to study the properties of some integral and differential operators that can be related to
		a specific convolution-type integral operator, called Prabhakar integral, introduced by \citet{Prab}.
		Before describing the results obtained we start by recalling the basic definitions and mathematical tools
		that will be useful in the following.

		First, let us give to the reader some insights on the classical operators related to fractional calculus. For more in-depth information,
		applications and related topics
		it is possible to consult some of the classical references, e.g.\ \citet{MR1347689}, \citet{MR1658022},
		\citet{MR2218073}, \citet{MR2680847}, \citet{MR3244285}, \citet{MR2676137}.
		We also suggest the reader to consult the references listed in these books
		as the relevant literature is becoming richer and richer.

		In the following we give the definitions of the Riemann--Liouville integral which in some sense generalizes the classical
		multiple integral, and of its naturally associated differential operator, i.e.\ the Riemann--Liouville derivative.
		We then proceed by describing the regularized version of the Riemann--Liouville derivative, the so-called Caputo or
		Caputo--D{\v{z}}rba{\v{s}}jan derivative, introduced independently in the sixties by \citet{caputo2,caputo1969elasticita}
		and \citet{MR0224984}.
		
		\begin{definition}[Riemann--Liouville integral]
		    Let $f \in L^1_{\text{loc}}(a,b)$, $-\infty \le a < t < b \le \infty$, be a locally integrable real-valued function.
		    Let us further define the power-law kernel $\mathcal{K}_\alpha (t) = t^{\alpha-1}/\Gamma(\alpha)$, $\alpha>0$. The operator
		    \begin{align}
		        \label{rlint}
		        (I^{\alpha}_{a^+} f)(t) & =\frac{1}{\Gamma(\alpha)}\int_{a}^t\frac{f(u)}{(t-u)^{1-\alpha}}\textup{d}u
		        = (f \ast \mathcal{K}_\alpha )(t), \qquad \alpha > 0,
		    \end{align}
		    is called Riemann--Liouville integral of order $\alpha$.
		\end{definition}

		\begin{definition}[Riemann--Liouville derivative]
			\label{rie}
		    Let $f \in L^1(a,b)$, $-\infty \le a < t < b \le \infty$, and $f
		    \ast \mathcal{K}_{m-\alpha} \in W^{m,1}(a,b)$, $m = \lceil \alpha \rceil$,
		    $\alpha>0$,
		    where $W^{m,1}(a,b)$ is the Sobolev space defined as
		    \begin{align}
		        W^{m,1}(a,b) = \left\{ f \in L^1(a,b) \colon \frac{\textup{d}^m}{\textup{d}t^m} f \in L^1(a,b) \right\}.
		    \end{align}
		    The Riemann--Liouville derivative of order $\alpha$ is defined as
		    \begin{align}
		        \label{rlder}
		        ( D^\alpha_{a^+}f )(t) = \frac{\textup{d}^m}{\textup{d}t^m}I_{a^+}^{m-\alpha}f(t) = \frac{1}{%
		        \Gamma(m-\alpha)} \frac{\textup{d}^m}{\textup{d}t^m} \int_{a}^t (t-s)^{m-1-\alpha}f(s) \, \textup{d}s.
		    \end{align}
		\end{definition}

		We denote by $AC^{n}\left(a,b\right)$, $n\in \mathbb{N}$, the
		space of real-valued functions $f\left( t\right) $ with
		continuous derivatives up to order $n-1$ on $\left( a,b\right) $
		such that $f^{\left(n-1\right) }\left(t\right)$ belongs to the space of absolutely continuous functions
		$AC\left(a,b\right)$, that is,
		\begin{equation}
			AC^{n}\left(a,b\right) =\left\{ f:\left(a,b\right) \rightarrow
			\mathbb{R}\colon\frac{\textup{d}^{n-1}}{\textup{d}x^{n-1}}f \left( x\right) \in AC\left(
			a,b\right) \right\} .
		\end{equation}

		\begin{definition}[Caputo derivative or regularized Riemann--Liouville derivative]
			\label{capu}
		    Let the parameter $\alpha>0$, $m = \lceil \alpha \rceil$, and $f \in AC^m(a,b)$.
		    The Caputo derivative (also known as regularized Riemann--Liouville derivative) of order $\alpha>0$ is defined as
		    \begin{equation}
		        \label{Capu}
		        ({}^C D^{\alpha}_{a^+}f)(t)= \left(I_{a^+}^{m-\alpha}\frac{\textup{d}^m}{\textup{d}t^m}f\right)(t)= \frac{1%
		        }{\Gamma(m-\alpha)}\int_a^{t}(t-s)^{m-1-\alpha}\frac{\textup{d}^m}{\textup{d}s^m}f(s) \, \textup{d}s.
		    \end{equation}
		\end{definition}
		
		The above Definition \ref{capu} should be compared with the non-regularized case of Definition \ref{rie}. The reader should
		also be aware of the fact that the above derivatives can be defined in different ways. For example a more
		interesting and intuitive way of defining the Caputo derivative is given by the following theorem.

		\begin{theorem}
		    \label{gianduia}
		    For $f \in AC^m(a,b)$, $m=\lceil \alpha \rceil$, $\alpha\in\mathbb{R}^+\backslash\mathbb{N}$,
		    the Riemann--Liouville derivative of order $\alpha$ of $f$
		    exists almost everywhere and it can be written in terms of Caputo derivative as
		    \begin{align}
		        \label{perepe}
		        (D_{a^+}^\alpha f)(t) = ({}^C D_{a^+}^\alpha f)(t) + \sum_{k=0}^{m-1} \frac{(x-a)^{k-\alpha}}{\Gamma(k-\alpha+1)} f^{(k)}(a^+).
		    \end{align}
		\end{theorem}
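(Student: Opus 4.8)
The plan is to avoid differentiating the singular monomials one at a time and instead to insert the Taylor expansion of $f$ with Riemann--Liouville integral remainder \emph{inside} the fractional integral $I_{a^+}^{m-\alpha}$ before taking the $m$-th derivative, so that every intermediate object stays integrable. Since $f\in AC^m(a,b)$, for $s\in(a,b)$ we may write
\[
f(s)=\sum_{k=0}^{m-1}\frac{(s-a)^k}{k!}\,f^{(k)}(a^+)+\frac{1}{(m-1)!}\int_a^s(s-\tau)^{m-1}f^{(m)}(\tau)\,\textup{d}\tau ,
\]
where the remainder is $(I_{a^+}^m f^{(m)})(s)$ with $f^{(m)}\in L^1(a,b)$. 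Substituting this into $(I_{a^+}^{m-\alpha}f)(t)$ and using linearity splits it into $m$ integrals $\int_a^t(t-s)^{m-1-\alpha}(s-a)^k\,\textup{d}s$, each a Beta integral equal to $(t-a)^{m-\alpha+k}\,\Gamma(m-\alpha)\Gamma(k+1)/\Gamma(m-\alpha+k+1)$, plus one double integral coming from the remainder.

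For that double integral, Fubini's theorem (justified below) exchanges the order of integration, and the inner integral $\int_\tau^t(t-s)^{m-1-\alpha}(s-\tau)^{m-1}\,\textup{d}s$ is again a Beta integral, equal to $(t-\tau)^{2m-1-\alpha}\Gamma(m-\alpha)\Gamma(m)/\Gamma(2m-\alpha)$; collecting the constants (and using $\Gamma(m)=(m-1)!$) one finds that this contribution is precisely $(I_{a^+}^{2m-\alpha}f^{(m)})(t)$. Hence
\[
(I_{a^+}^{m-\alpha}f)(t)=\sum_{k=0}^{m-1}\frac{f^{(k)}(a^+)}{\Gamma(m-\alpha+k+1)}\,(t-a)^{m-\alpha+k}+(I_{a^+}^{2m-\alpha}f^{(m)})(t).
\]
Now apply $\textup{d}^m/\textup{d}t^m$. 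Each power $(t-a)^{m-\alpha+k}$ with $0\le k\le m-1$ is smooth on $(a,b)$ and has $m$-th derivative $\tfrac{\Gamma(m-\alpha+k+1)}{\Gamma(k-\alpha+1)}(t-a)^{k-\alpha}$, the Gamma factor being finite and nonzero because $\alpha\notin\mathbb N$; this reproduces the finite sum in \eqref{perepe}. For the remainder, write $I_{a^+}^{2m-\alpha}f^{(m)}=I_{a^+}^m\bigl(I_{a^+}^{m-\alpha}f^{(m)}\bigr)$ by the semigroup property of the Riemann--Liouville integrals and use $\frac{\textup{d}^m}{\textup{d}t^m}I_{a^+}^m g=g$ a.e.\ for $g\in L^1(a,b)$ (the iterated fundamental theorem of calculus) with $g=I_{a^+}^{m-\alpha}f^{(m)}\in L^1(a,b)$; this term therefore contributes $(I_{a^+}^{m-\alpha}f^{(m)})(t)=({}^C D_{a^+}^\alpha f)(t)$. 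Since $I_{a^+}^{2m-\alpha}f^{(m)}\in W^{m,1}(a,b)$ and the power terms are differentiable a.e.\ on $(a,b)$, this simultaneously shows that $(D_{a^+}^\alpha f)(t)$ exists almost everywhere, and summing the two contributions yields \eqref{perepe}.

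The only genuine technical points are the applicability of Fubini to the remainder double integral — valid because $(t-s)^{m-1-\alpha}$ is integrable near $s=t$ (the exponent lies in $(-1,0)$, as $\alpha\in(m-1,m)$), $(s-\tau)^{m-1}$ is bounded, and $f^{(m)}\in L^1(a,b)$, so the iterated kernel integral converges absolutely — together with the semigroup identity $I_{a^+}^{m-\alpha}I_{a^+}^{m}=I_{a^+}^{2m-\alpha}$ on $L^1$, itself a Beta-function computation combined with Fubini. I would stress that the reason for substituting the Taylor expansion \emph{under} $I_{a^+}^{m-\alpha}$ rather than applying $D_{a^+}^\alpha$ termwise is exactly that the individual terms $(t-a)^{k-\alpha}$ need not be integrable near $a$ when $\alpha\ge 1$, so they do not separately lie in the domain of Definition \ref{rie}, whereas their sum does; once this care is taken, the rest is bookkeeping with the Gamma function. (Here $x$ in \eqref{perepe} should be read as $t$.)
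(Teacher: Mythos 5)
Your argument is correct and complete. Note that the paper itself states Theorem \ref{gianduia} without proof (it is a classical fact, see e.g.\ Diethelm's book or Kilbas--Srivastava--Trujillo), so there is no in-paper proof to compare against; the route you take --- inserting the Taylor expansion with integral remainder of $f\in AC^m(a,b)$ under $I_{a^+}^{m-\alpha}$, evaluating the resulting Beta integrals to get $(I_{a^+}^{m-\alpha}f)(t)=\sum_{k=0}^{m-1}\frac{f^{(k)}(a^+)}{\Gamma(m-\alpha+k+1)}(t-a)^{m-\alpha+k}+(I_{a^+}^{2m-\alpha}f^{(m)})(t)$, and then differentiating $m$ times --- is exactly the standard textbook proof. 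All the computational steps check out (the Beta integrals, the Fubini justification with exponent $m-1-\alpha\in(-1,0)$, the semigroup identity, and $\frac{\textup{d}^m}{\textup{d}t^m}I_{a^+}^m g=g$ a.e.), and your derivation simultaneously yields the a.e.\ existence of $D_{a^+}^\alpha f$ as claimed. Your closing remark, that one must expand under the integral rather than apply $D_{a^+}^\alpha$ termwise because the individual singular monomials $(t-a)^{k-\alpha}$ need not be locally integrable for $\alpha\ge 1$, is a genuinely relevant point of care, and you are right that the variable $x$ in \eqref{perepe} is a typo for $t$.
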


		Theorem \ref{gianduia} is interesting in that it in practice describes the set of functions with which the
		Riemann--Liouville derivative can be regularized. It is well-known that if
		$f(t)\in AC^m(a,b)$,
		\begin{align}
		    \label{purupu}
		    \lim_{t \to a^+} \left(\frac{\textup{d}^k}{\textup{d}t^k} (I_{a^+}^{m-\alpha} f)\right)(t) = 0, \qquad 0 \le k \le m-1.
		\end{align}
		By taking the Laplace transform of both sides of \eqref{perepe} we immediately realize that
		\eqref{perepe} still holds if \eqref{purupu} is true.

		In the recent years new alternative definitions of fractional operators have been introduced in the literature. An
		interesting example is the so-called Hilfer derivative \citep{hilfer2008threefold, MR2517679, MR2739389}.
		The idea behind the introduction of this derivative is to interpolate between the Riemann--Liouville
		and the Caputo derivatives. As it is clear from the definition below, the Hilfer derivative depends on the
		parameter $\nu \in [0,1]$ that balances the individual contributions of the two fractional derivatives.

		\begin{definition}[Hilfer derivative]
		    Let $\mu\in (0,1)$, $\nu \in[0,1]$, $f \in L^1[a,b]$,
		    $-\infty \le a < t < b \le \infty$, $f \ast \mathcal{K}_{(1-\nu)(1-\mu)} \in AC^1[a,b]$,
		    where $\mathcal{K}_\alpha (t) = t^{\alpha-1}/\Gamma(\alpha)$. The Hilfer derivative is defined as
		    \begin{equation}
		        \label{hil}
		        (D^{\mu,\nu}_{a^+}f)(t)=\left(I_{a^+}^{\nu(1-\mu)} \frac{\textup{d}}{\textup{d}t}%
		        (I_{a^+}^{(1-\nu)(1-\mu)}f)\right)(t),
		    \end{equation}
		\end{definition}
		Notice that Hilfer derivatives coincide with Riemann--Liouville derivatives for $\nu=0$ and with
		Caputo derivatives for $\nu=1$.

		In order to proceed with the description of the relevant operators involved, we need to
		consider now the function			
		\begin{align}
			\label{efun}
			e^{\gamma}_{\rho,\mu,\omega}(t) = t^{\mu-1}E^{\gamma}_{\rho,\mu}\left(\omega t^{\rho} \right),
			\qquad t \in \mathbb{R}, \: \rho, \mu, \omega, \gamma\in \mathbb{C}, \: \Re(\rho)>0, 
		\end{align}
		where
	    \begin{equation}
	        E^{\gamma}_{\rho,\mu}(x)=\sum_{k=0}^{\infty}\frac{\Gamma(\gamma+k)}{%
	        \Gamma(\gamma)\Gamma(\rho k+\mu)}\frac{x^k}{k!},
	    \end{equation}
	    is the generalized Mittag--Leffler function first investigated by %
	    \citet{Prab}.			
		The so-called Prabhakar integral is constructed in a similar way of Riemann--Liouville integrals.
		The main difference is that, the power-law kernel $\mathcal{K}_\alpha(t)$
		in the integral representation of the operator is replaced by the function \eqref{efun}.
		The kernel \eqref{efun} actually generalizes $\mathcal{K}_\alpha(t)$ in the sense that
		$e^{0}_{\rho,\mu,\omega}(t) = \mathcal{K}_\mu(t)$.
		The Prabhakar integral is hence defined as follows \citep{Prab,Kil}.

		\begin{definition}[Prabhakar integral]
		    Let $f \in L^1(a,b)$, $0 \le a < t < b \le \infty$. The Prabhakar integral is defined as
		    \begin{equation}
		        \label{pra}
		        (\mathbf{E}^{\gamma}_{\rho,\mu, \omega, a^+}f)(t)
		        = (f \ast e^{\gamma}_{\rho,\mu,\omega} )(t)
		        = \int_a^{t}(t-y)^{\mu-1}E^{\gamma}_{\rho,\mu}\left[\omega (t-y)^{\rho} %
		        \right]f(y) \, \mathrm dy,
		    \end{equation}
		    where $\rho, \mu, \omega, \gamma\in \mathbb{C}$, with
		    $\Re(\rho),\Re(\mu)>0$.
		\end{definition}
		As just remarked, for $\gamma=0$, the integral \eqref{pra} coincides with \eqref{rlint}.
		
		An interesting property of the Prabhakar integral is the following \citep[][formula (2.21)]{Kil}.
		
		\begin{proposition}
			\label{calcolo}
			Let $\gamma,\rho,\mu,\eta,\sigma,\omega \in \mathbb{C}$, $\Re(\rho),\Re(\mu),\Re(\eta)>0$, $t \in \mathbb{R}$. Then
			\begin{align}
				(\mathbf{E}^{\gamma}_{\rho,\mu, \omega, 0^+} e^\sigma_{\rho,\eta,\omega})(t) = e^{\gamma+\sigma}_{\rho,\mu+\eta,\omega}(t).
			\end{align}
		\end{proposition}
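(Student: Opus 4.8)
The plan is to compute the convolution $(e^\gamma_{\rho,\mu,\omega} \ast e^\sigma_{\rho,\eta,\omega})(t)$ directly by expanding both generalized Mittag--Leffler functions as power series, interchanging the sum and the integral, and recognizing the resulting double series as $e^{\gamma+\sigma}_{\rho,\mu+\eta,\omega}(t)$. First I would write
\begin{align*}
(\mathbf{E}^{\gamma}_{\rho,\mu,\omega,0^+} e^\sigma_{\rho,\eta,\omega})(t)
&= \int_0^t (t-y)^{\mu-1} E^\gamma_{\rho,\mu}\!\left[\omega(t-y)^\rho\right]\, y^{\eta-1} E^\sigma_{\rho,\eta}\!\left[\omega y^\rho\right] \, \mathrm dy \\
&= \sum_{j=0}^\infty \sum_{k=0}^\infty \frac{\Gamma(\gamma+j)}{\Gamma(\gamma)\,\Gamma(\rho j+\mu)\, j!}\,
\frac{\Gamma(\sigma+k)}{\Gamma(\sigma)\,\Gamma(\rho k+\eta)\, k!}\,\omega^{j+k}
\int_0^t (t-y)^{\rho j+\mu-1} y^{\rho k+\eta-1}\,\mathrm dy.
\end{align*}
The inner integral is a Beta integral: $\int_0^t (t-y)^{\rho j+\mu-1} y^{\rho k+\eta-1}\,\mathrm dy = t^{\rho(j+k)+\mu+\eta-1} B(\rho j+\mu,\rho k+\eta) = t^{\rho(j+k)+\mu+\eta-1} \frac{\Gamma(\rho j+\mu)\Gamma(\rho k+\eta)}{\Gamma(\rho(j+k)+\mu+\eta)}$, which conveniently cancels the two Gamma factors $\Gamma(\rho j+\mu)$ and $\Gamma(\rho k+\eta)$ in the denominators.

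Next I would collect the powers of $\omega t^\rho$: setting $n=j+k$ and summing over $k$ from $0$ to $n$, the series becomes
\begin{equation*}
t^{\mu+\eta-1} \sum_{n=0}^\infty \frac{(\omega t^\rho)^n}{\Gamma(\rho n+\mu+\eta)}
\sum_{k=0}^n \frac{\Gamma(\gamma+n-k)}{\Gamma(\gamma)\,(n-k)!}\,\frac{\Gamma(\sigma+k)}{\Gamma(\sigma)\,k!}.
\end{equation*}
The inner finite sum is the Vandermonde-type convolution identity for Pochhammer symbols (equivalently, the Cauchy product of the binomial series $(1-x)^{-\gamma}(1-x)^{-\sigma}=(1-x)^{-(\gamma+\sigma)}$), giving $\sum_{k=0}^n \binom{\gamma+n-k-1}{n-k}\binom{\sigma+k-1}{k} = \binom{\gamma+\sigma+n-1}{n} = \Gamma(\gamma+\sigma+n)/(\Gamma(\gamma+\sigma)\,n!)$. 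Substituting this back yields exactly $t^{\mu+\eta-1} E^{\gamma+\sigma}_{\rho,\mu+\eta}(\omega t^\rho) = e^{\gamma+\sigma}_{\rho,\mu+\eta,\omega}(t)$, as claimed.

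The main obstacle is purely a matter of rigour rather than of ideas: justifying the interchange of the two infinite summations with the integral, and the rearrangement into the diagonal sum over $n$. This follows from absolute convergence — the generalized Mittag--Leffler function $E^\gamma_{\rho,\mu}(z)$ is entire in $z$ when $\Re(\rho)>0$, so on the compact interval $[0,t]$ the double series of absolute values is dominated by a convergent product of two Mittag--Leffler series evaluated at $|\omega|t^\rho$, and Tonelli/Fubini applies. Once this is noted, everything else is the bookkeeping above; the algebraic heart is the cancellation of the Beta-function Gamma factors followed by the Pochhammer convolution identity. (Alternatively, one could bypass the combinatorics by taking Laplace transforms: $\widehat{e^\gamma_{\rho,\mu,\omega}}(s) = s^{-\mu}(1-\omega s^{-\rho})^{-\gamma}$ for $\Re(s)$ large, so the convolution transforms to $s^{-(\mu+\eta)}(1-\omega s^{-\rho})^{-(\gamma+\sigma)}$, which inverts to $e^{\gamma+\sigma}_{\rho,\mu+\eta,\omega}(t)$; but the direct series computation is self-contained and avoids invoking transform injectivity.)
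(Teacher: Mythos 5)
Your proof is correct. The paper itself gives no proof of Proposition~\ref{calcolo}, quoting it directly from Kilbas--Saigo--Saxena (their formula (2.21)); your series expansion, Beta-integral evaluation, and Vandermonde--Pochhammer convolution $\sum_{k=0}^{n}\frac{(\gamma)_{n-k}(\sigma)_k}{(n-k)!\,k!}=\frac{(\gamma+\sigma)_n}{n!}$ constitute precisely the standard self-contained derivation of that formula, and your Fubini justification (with $|\omega|t^{\Re\rho}$ and $\Re(\mu),\Re(\eta)>0$ ensuring convergence of the Beta integrals) is adequate.
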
		
		
		Analogously to the classical fractional operators, a related differential operator can be defined as follows.
				
		\begin{definition}[Prabhakar derivative]
		    Let $f \in L^1(a,b)$, $0 \le a < t < b \le \infty$, and $f \ast e_{\rho, m-\mu,\omega}^{-\gamma}(\cdot) \in W^{m,1}(a,b)$,
		    $m = \lceil \mu \rceil$.
		    The Prabhakar derivative is defined as
		    \begin{align}
		    	\label{prabinv}
		        (\mathbf{D}^{\gamma}_{\rho, \mu, \omega, a^+}f)(t)
		        = \left(\frac{\mathrm d^m}{\textup{d}t^m}(\mathbf{E}^{-\gamma}_{\rho, m-\mu, \omega, a^+}f)\right)(t),
		    \end{align}
		    where $\mu,\omega,\gamma,\rho \in \mathbb{C}$, $\Re(\mu),\Re(\rho)>0$.
		\end{definition}

		The inverse operator \eqref{prabinv} of the Prabhakar integral, for $\gamma=0$,
		generalizes the Riemann--Liouville derivative as $E^0_{\rho,m-\mu}[\omega(t-y)^\rho]=1/\Gamma(m-\mu)$ in
		the kernel of \eqref{prabinv}. See also \citet{Kil} for a different but equivalent definition.
		
		The analogous operator to the Caputo derivative, that is the regularized Prabhakar derivative plays an
		important role in the construction of meaningful initial-value problems as it was noted in
		\citet{MR3239686}. The regularized Prabhakar derivative was introduced in \citet{d2013fractional}.
				
		\begin{definition}[Regularized Prabhakar derivative]
			Consider $f \in AC^m(a,b)$, $0 \le a < t < b \le \infty$. The regularized
			Prabhakar derivative reads
			\begin{align}
				\label{capl}
				({}^C\mathbf{D}^{\gamma}_{\rho, \mu, \omega,
				a^+}f)(t)&=\left( \mathbf{E}^{-\gamma}_{\rho, m-\mu, \omega, a^+}\frac{\textup{d}^m}{\textup{d}t^m}
				f\right)(t) \\
				&=(\mathbf{D}^{\gamma}_{\rho, \mu, \omega,
				a^+}f)(t)- \sum_{k=0}^{m-1} t^{k-\mu} E^{-\gamma}_{\rho,k-\mu+1}(\omega t^{\rho}) f^{(k)}(a^+). \notag
			\end{align}
		\end{definition}
		For $\gamma=0$ the operator \eqref{capl} coincides with the Caputo derivative \eqref{Capu}.
		\begin{remark}
		    Let $\mu>0$ and $f \in AC^m(a,b)$, $0 \le a < t < b \le \infty$. Then
		    \begin{align}
		        ({}^C\mathbf{D}^{\gamma}_{\rho, \mu, \omega,a^+}f)(t) = ( \mathbf{D}^{\gamma}_{\rho, \mu, \omega,a^+}
		        h ) (t),
		    \end{align}
		    where $h(t) = f(t) - \sum_{k=0}^{m-1}\frac{t^k}{k!}f^{(k)}(a^+)$.
		\end{remark}

		The last operator we need to derive the results described in Section \ref{mai} is similar to the Hilfer
		derivative but based on Prabhakar operators. Therefore, let us give the following
		
		\begin{definition}[Hilfer--Prabhakar derivative]
			\label{ge}
		    Let $\mu\in (0,1)$, $\nu \in[0,1]$, and let $f \in L^1(a,b)$,
		    $0 \le a < t < b \le \infty$, $f \ast e_{\rho, (1-\nu)(1-\mu),\omega}^{-\gamma(1-\nu)}(\cdot) \in AC^1(a,b)$.
			The Hilfer--Prabhakar derivative is defined by
			\begin{equation}
				\label{hilg}
				(\mathcal{D}^{\gamma, \mu, \nu}_{\rho, \omega, a^+} f)(t) =\left(\mathbf{E}%
				_{\rho, \nu(1-\mu),\omega, a^+}^{-\gamma \nu}\frac{\mathrm d}{\mathrm dt}( \mathbf{E}_{\rho,
				(1-\nu)(1-\mu),\omega, a^+}^{-\gamma(1-\nu)}f)\right)(t),
			\end{equation}
			where $\gamma, \omega \in \mathbb{R}$, $\rho > 0$, and where $(\mathbf{E}_{\rho, 0,\omega, a^+}^0 f)(t) = f(t)$.
		\end{definition}
		This Hilfer--Prabhakar derivative interpolates the two Prabhakar-type operators \eqref{prabinv} and \eqref{capl}
		and it specializes to the Hilfer derivative for $\gamma=0$.
		
		We can also think of a regularized version of \eqref{hilg}, that is, for $f \in AC^1(a,b)$, we have
		\begin{align}
			\label{regn}
			({}^C\mathcal{D}^{\gamma, \mu}_{\rho, \omega, a^+} f)(t) & =\left(\mathbf{E}_{\rho,
			\nu(1-\mu), \omega, a^+}^{-\gamma\nu} \left(\mathbf{E}_{\rho, (1-\nu)(1-\mu),\omega,
			a^+}^{-\gamma(1-\nu)} \frac{\textup{d}}{\textup{d}t}f\right) \right)(t) \\
			& = \left(\mathbf{E}_{\rho,
			1-\mu,\omega, a^+}^{-\gamma} \frac{\textup{d}}{\textup{d}t}f\right)(t). \notag
		\end{align}
		Notice that in the regularized Hilfer--Prabhakar derivative \eqref{regn}
		there is no dependence on the interpolating parameter $\nu$.
		
		Before proceeding to the results section we present here an estimate for the function
		$e_{\alpha ,\beta, \omega }^{\gamma }\left( t\right)$ that proves to be necessary in the proof of some of the theorems below.
		We refer in particular to Theorem 3 of \citet{tom}.
		
		\begin{theorem}[Tomovski--Pog\'any--Srivastava]
			\label{sta}
			For all $\alpha \in (0,1)$, $\gamma,\omega>0$, $\alpha \gamma > \beta-1>0$, the following uniform bound holds true:
			\begin{align}
				\label{poga}
				\left| e_{\alpha ,\beta, \omega }^{\gamma }(t) \right| \le \frac{\Gamma \left( \gamma-\frac{\beta-1}{\alpha} \right)
				\Gamma \left( \frac{\beta-1}{\alpha} \right)}{
				\pi \alpha \omega^{(\beta-1)/\alpha} \Gamma(\gamma) \left( \cos(\pi \alpha /2) \right)^{\gamma-(\beta-1)/\alpha} },
				\qquad t > 0.
			\end{align}
		\end{theorem}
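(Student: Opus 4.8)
My plan is to realise $e^{\gamma}_{\alpha,\beta,\omega}$ as an inverse Laplace transform and then estimate the inversion integral along the imaginary axis, where the oscillatory factor has modulus one; this produces a bound independent of $t$, as \eqref{poga} requires. Concretely, I would start from Prabhakar's Laplace transform (termwise transformation of the series \eqref{efun}; see \citet{Prab}),
\[
  \bigl(\mathcal L\, e^{\gamma}_{\alpha,\beta,\omega}\bigr)(s) \;=\; \frac{s^{\alpha\gamma-\beta}}{(s^{\alpha}+\omega)^{\gamma}} \;=:\; \widehat g(s),
\]
the sign of $\omega$ in the kernel being taken so that the argument of the Mittag--Leffler function is negative, which is the regime relevant to \eqref{poga}. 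Since $0<\alpha<1$ and $\omega>0$, the quantity $s^{\alpha}+\omega$ has strictly positive real part on $\{\Re s>0\}$, so $\widehat g$ is holomorphic there, with $\widehat g(s)=O(|s|^{-\beta})$ as $|s|\to\infty$ and $\widehat g(s)=O(|s|^{\alpha\gamma-\beta})$ as $s\to0$. Because $\beta-1>0$ the Bromwich integral converges absolutely, and because $\alpha\gamma-\beta>-1$ the only boundary singularity (at the origin) is integrable, so the Bromwich contour may be moved from $\Re s=c>0$ onto the imaginary axis. This would give, for $t>0$,
\[
  e^{\gamma}_{\alpha,\beta,\omega}(t) \;=\; \frac{1}{2\pi i}\int_{-i\infty}^{i\infty} e^{st}\,\widehat g(s)\,\mathrm ds \;=\; \frac{1}{2\pi}\int_{-\infty}^{\infty} e^{ivt}\,\widehat g(iv)\,\mathrm dv .
\]

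Next I would take absolute values. Using $|e^{ivt}|=1$ and $|\widehat g(-iv)|=|\widehat g(iv)|$ (valid since $\widehat g(\bar s)=\overline{\widehat g(s)}$) gives $|e^{\gamma}_{\alpha,\beta,\omega}(t)|\le \pi^{-1}\int_0^\infty|\widehat g(iv)|\,\mathrm dv$. For $v>0$ we have $(iv)^{\alpha}=v^{\alpha}e^{i\pi\alpha/2}$, hence $|(iv)^{\alpha\gamma-\beta}|=v^{\alpha\gamma-\beta}$, and --- since $0<\alpha<1$ makes $\cos(\pi\alpha/2)>0$ ---
\[
  |(iv)^{\alpha}+\omega|\;\ge\;\Re\!\bigl(v^{\alpha}e^{i\pi\alpha/2}+\omega\bigr)\;=\;v^{\alpha}\cos(\pi\alpha/2)+\omega .
\]
Substituting these bounds and then putting $u=v^{\alpha}\cos(\pi\alpha/2)/\omega$ reduces the problem to the Beta integral $\int_0^\infty u^{p-1}(1+u)^{-\gamma}\,\mathrm du = \Gamma(p)\Gamma(\gamma-p)/\Gamma(\gamma)$ with $p=\gamma-(\beta-1)/\alpha$; the hypotheses $0<\beta-1<\alpha\gamma$ are exactly what guarantees $0<p<\gamma$, and since $\gamma-p=(\beta-1)/\alpha$, collecting the remaining constants ($\omega^{-(\beta-1)/\alpha}$, the power of $\cos(\pi\alpha/2)$, and $1/(\pi\alpha)$) yields precisely the right-hand side of \eqref{poga}.

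The main obstacle, I expect, will be the rigorous justification of the contour shift onto the imaginary axis in the first step: one must check that sliding the Bromwich line from deep inside the half-plane of holomorphy down to $\Re s=0$ crosses no singularity, and that the connecting horizontal segments (at heights $\pm R$) and the small indentation around the origin both contribute nothing in the limit. This is where the two parameter restrictions are genuinely used --- $\beta>1$ for the decay at infinity that kills the horizontal links and makes the Bromwich integral converge in the first place, and $\alpha\gamma>\beta-1$ for the integrability of $\widehat g$ at $0$ that kills the indentation. Everything after that identity --- the triangle inequality, the elementary lower bound on $|(iv)^{\alpha}+\omega|$, and the Beta-integral evaluation --- is routine bookkeeping.
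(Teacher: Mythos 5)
Your derivation is correct, and the two hypotheses are used exactly where you say they are: $\beta>1$ gives $\widehat g(s)=O(|s|^{-\beta})$, which makes the Bromwich integral absolutely convergent and kills the horizontal connectors, while $\alpha\gamma>\beta-1$ makes $|s|^{\alpha\gamma-\beta}$ integrable at the origin and kills the indentation; holomorphy on the closed right half-plane (minus the origin) holds because for $0<\alpha<1$ the principal branch of $s^{\alpha}$ never meets the ray $(-\infty,0]$, so $s^{\alpha}+\omega\neq 0$ there. I checked the bookkeeping: the substitution $u=v^{\alpha}\cos(\pi\alpha/2)/\omega$ applied to $\pi^{-1}\int_0^{\infty}v^{\alpha\gamma-\beta}\bigl(v^{\alpha}\cos(\pi\alpha/2)+\omega\bigr)^{-\gamma}\,\mathrm dv$ produces precisely the constant on the right-hand side of \eqref{poga}, including the exponent $\gamma-(\beta-1)/\alpha$ on the cosine, and the Beta integral converges exactly when $0<\beta-1<\alpha\gamma$. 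Be aware, though, that the paper itself contains no proof of Theorem \ref{sta}: it is quoted from \citet{tom} with the remark that the proof there relies on an estimate of the Wright function due to \citet{MR0280762}. Your argument is therefore a genuinely different and more self-contained route --- a direct Bromwich inversion pushed onto the imaginary axis plus a Beta integral, with no Wright functions at all --- whose only delicate point is the contour shift you already flagged; the Wright-function route trades that complex-analytic care for reliance on a nontrivial external asymptotic estimate.

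One point you should state as a correction rather than a parenthetical choice: with the paper's literal definition \eqref{efun} and $\omega>0$, the function $e^{\gamma}_{\alpha,\beta,\omega}(t)=t^{\beta-1}E^{\gamma}_{\alpha,\beta}(\omega t^{\alpha})$ grows without bound as $t\to\infty$, so no uniform bound of the form \eqref{poga} can hold. The theorem must be read for the kernel with negative argument, i.e.\ with transform $s^{\alpha\gamma-\beta}(s^{\alpha}+\omega)^{-\gamma}$ exactly as you wrote it; your sign convention is the correct reading of \citet{tom}, but it amends the statement rather than merely interpreting it.
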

		The proof of Theorem \ref{sta} makes use of an estimate of the Wright function given by \citet{MR0280762}.

	\section{Some bounds and operational calculus with Prabhakar-type operators}
	
		\label{mai}
		The first two results we present concern the boundedness of the Prabhakar integral \eqref{pra} in the space $L^p$, $p \in (0,1]$
		and in $L^1$ for $L^p$ functions, $p\in (1,\infty)$.

		\begin{theorem}
			\label{primoteo}
			Let $\alpha \in \left( 0,1\right)$, $\gamma ,\omega >0$, and $\alpha \gamma
			>\beta -1>0$. If $\varphi \in L^{p}\left( a,b\right)$, $0<p\leq
			1$, then the integral operator $\mathbf{E}_{\alpha ,\beta, \omega, a^+
			}^{\gamma}$ is bounded in $L^{p}\left( a,b\right) $ and 
			\begin{equation}
				\left\Vert (\mathbf{E}_{\alpha ,\beta, \omega, a^+
				}^{\gamma}\varphi)
				\right\Vert _{p}\leq M\left\Vert \varphi \right\Vert _{p},
			\end{equation}%
			where the constant $M$, $0<M<\infty$, is given by%
			\begin{equation}
				\label{givenby}
				M=\frac{\textup{Be} \left( \gamma -\frac{\beta -1}{\alpha }, 
				\frac{\beta -1}{\alpha }\right) }{\pi \alpha \omega ^{\frac{\beta -1}{\alpha 
				}} \left[ \cos \left( \frac{\pi \alpha }{2}%
				\right) \right] ^{\gamma -\frac{\beta -1}{\alpha }}}\left( b-a\right) ^{1/p},
			\end{equation}
			in which $\textup{Be}(\mu,\nu)$ is the Beta function.
		\end{theorem}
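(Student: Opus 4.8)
The plan is to reduce the bound to a straightforward estimate of the convolution $\mathbf{E}_{\alpha,\beta,\omega,a^+}^\gamma\varphi = \varphi \ast e_{\alpha,\beta,\omega}^\gamma$ using Theorem \ref{sta}. First I would invoke Theorem \ref{sta}: under the hypotheses $\alpha\in(0,1)$, $\gamma,\omega>0$, $\alpha\gamma>\beta-1>0$, the kernel $e_{\alpha,\beta,\omega}^\gamma(t)$ is bounded uniformly in $t>0$ by the constant
\[
C=\frac{\Gamma\!\left(\gamma-\frac{\beta-1}{\alpha}\right)\Gamma\!\left(\frac{\beta-1}{\alpha}\right)}{\pi\alpha\,\omega^{(\beta-1)/\alpha}\Gamma(\gamma)\left(\cos(\pi\alpha/2)\right)^{\gamma-(\beta-1)/\alpha}}
=\frac{\textup{Be}\!\left(\gamma-\frac{\beta-1}{\alpha},\frac{\beta-1}{\alpha}\right)}{\pi\alpha\,\omega^{(\beta-1)/\alpha}\left[\cos(\pi\alpha/2)\right]^{\gamma-(\beta-1)/\alpha}},
\]
the last equality being just the definition of the Beta function. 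Thus $M=C(b-a)^{1/p}$ is the constant claimed in \eqref{givenby}, and $0<M<\infty$ because all Gamma factors are finite and positive and $\cos(\pi\alpha/2)>0$ for $\alpha\in(0,1)$.

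Next I would split the argument according to whether $p\le 1$ or $p\ge 1$, since the paper's statement uses $0<p\le 1$. For $p=1$: writing $g(t)=(\mathbf{E}_{\alpha,\beta,\omega,a^+}^\gamma\varphi)(t)=\int_a^t e_{\alpha,\beta,\omega}^\gamma(t-y)\varphi(y)\,\mathrm dy$, pointwise $|g(t)|\le C\int_a^t|\varphi(y)|\,\mathrm dy\le C\|\varphi\|_1$, and then $\|g\|_1=\int_a^b|g(t)|\,\mathrm dt\le C\|\varphi\|_1(b-a)=M\|\varphi\|_1$. For general $0<p\le 1$ one can either use the elementary inequality $|u+v|^p\le |u|^p+|v|^p$ to handle the (quasi-)norm, or more cleanly apply the $L^p$-convolution/Young-type estimate $\|k\ast\varphi\|_p\le \|k\|_1\|\varphi\|_p$ (valid for $p\ge1$) after first noting that on a finite interval $L^p\hookrightarrow$ the relevant space; the cleanest uniform approach is: bound $|g(t)|\le C\,\|\varphi\|_{L^1(a,b)}$ pointwise, then use $\|\varphi\|_{L^1(a,b)}\le (b-a)^{1-1/p}\|\varphi\|_{L^p(a,b)}$ by H\"older (valid for $p\ge 1$) — but since the claim is for $p\le1$ one instead estimates directly $\|g\|_p^p=\int_a^b|g(t)|^p\,\mathrm dt\le C^p(b-a)\,\bigl(\sup_{t}\int_a^t|\varphi|\bigr)^p$ and controls $\int_a^t|\varphi(y)|\,\mathrm dy$ by $\|\varphi\|_p$ appropriately. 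The precise bookkeeping of the exponent $(b-a)^{1/p}$ is a short computation I would carry out carefully.

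The only genuine input is Theorem \ref{sta}; everything else is the standard observation that convolution with an $L^\infty$ (hence locally $L^1$) kernel on a finite interval is bounded. The main obstacle — really just a point requiring care — is matching the power $(b-a)^{1/p}$ in \eqref{givenby} to the correct case of H\"older/interpolation for the stated range $0<p\le1$; one must track which integral gets the sup-bound $C$ pulled out and which gets the length factor, so that the quasi-norm inequality closes with exactly the exponent $1/p$. I would finish by remarking that finiteness of $M$ follows from $\alpha\in(0,1)\Rightarrow\cos(\pi\alpha/2)\in(0,1)$ and from $\gamma>\frac{\beta-1}{\alpha}>0$, which makes both arguments of the Beta function positive.
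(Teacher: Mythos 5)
You correctly identify Theorem \ref{sta} as the only substantive input, and the constant $M=C(b-a)^{1/p}$ you extract matches \eqref{givenby} (including rewriting $\Gamma(\gamma-\tfrac{\beta-1}{\alpha})\Gamma(\tfrac{\beta-1}{\alpha})/\Gamma(\gamma)$ as a Beta function); your $p=1$ case is fine. But for the range that actually matters, $0<p<1$, there is a genuine gap: every concrete route you sketch ultimately tries to control $\int_a^t|\varphi(y)|\,\mathrm dy$ or $\|\varphi\|_{L^1(a,b)}$ by $\|\varphi\|_{L^p(a,b)}$, and this is impossible when $p<1$. On a finite interval the inclusion runs the other way ($L^1\subset L^p$ for $p<1$): the function $\varphi(y)=(y-a)^{-1}$ belongs to $L^{1/2}(a,b)$ while $\int_a^t|\varphi(y)|\,\mathrm dy=\infty$ for every $t>a$. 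Young's inequality $\|k\ast\varphi\|_p\le\|k\|_1\|\varphi\|_p$ likewise fails for $p<1$, as you yourself note. So the ``short computation to be carried out carefully'' is not bookkeeping; it is the missing idea, and none of your listed options can supply it.

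The paper bypasses $L^1$ entirely. It applies, inside $\|\cdot\|_p^p$, the inequality
\begin{equation*}
\Bigl|\int_a^x f(t)\,\mathrm dt\Bigr|^p\le\int_a^x|f(t)|^p\,\mathrm dt,\qquad 0<p\le1,
\end{equation*}
with $f=e^\gamma_{\alpha,\beta,\omega}(x-\cdot)\,\varphi$, then pulls out $C^p$ using the uniform bound of Theorem \ref{sta}, enlarges $\int_a^x$ to $\int_a^b$, and integrates in $x$ to collect the factor $(b-a)$, whence $(b-a)^{1/p}$ after taking $p$-th roots. That single displayed inequality is the step your plan lacks. You should also be aware that it is itself delicate: it is the continuous analogue of $|\sum a_i|^p\le\sum|a_i|^p$, but as stated it fails for, e.g., $f\equiv1$ on an interval of length less than one, so if you adopt the paper's route you should justify (or suitably modify) that step rather than cite it as well known.
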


		\begin{proof}
			In order to prove the result it is sufficient to show that 
			\begin{equation}
				\left\Vert (\mathbf{E}_{\alpha ,\beta, \omega, a^+}^{\gamma}\varphi)
				\right\Vert _{p}^{p}=\int_{a}^{b}\left|
				\int_{a}^{x}\left( x-t\right) ^{\beta -1}E_{\alpha ,\beta }^{\gamma }%
				\left[ \omega \left( x-t\right) ^{\alpha }\right] \varphi \left( t\right)
				\, \mathrm dt\right| ^{p} \mathrm dx<\infty .
			\end{equation}%
			This can be done by recalling the well-known integral inequality%
			\begin{equation}
				\left| \int_{a}^{x}f\left( t\right) \mathrm dt\right|^{p}\leq
				\int_{a}^{x}\left| f\left( t\right) \right|^{p} \mathrm dt, \qquad 0<p\leq 1,
			\end{equation}%
			and the uniform bound of the function $e_{\alpha ,\beta, \omega }^{\gamma }\left(
			t\right) $ (see Theorem 3 of \citet{tom}). We obtain%
			\begin{align}
				\left\Vert (\mathbf{E}_{\alpha ,\beta, \omega, a^+}^{\gamma}\varphi)
				\right\Vert _{p}^{p}
				& \leq \int_{a}^{b}\left(
				\int_{a}^{x}\left\vert e_{\alpha ,\beta, \omega }^{\gamma }\left( x-t
				\right) \right\vert ^{p}\left\vert \varphi \left( t\right) \right\vert
				^{p} \mathrm dt\right) \mathrm dx \\
				& \leq \left( \frac{\Gamma \left( \gamma -\frac{\beta -1}{\alpha }\right)
				\Gamma \left( \frac{\beta -1}{\alpha }\right) }{\pi \alpha \omega ^{\frac{%
				\beta -1}{\alpha }}\Gamma \left( \gamma \right) \left[ \cos \left( \frac{\pi
				\alpha }{2}\right) \right] ^{\gamma -\frac{\beta -1}{\alpha }}}\right)
				^{p}\int_{a}^{b}\left( \int_{a}^{b}\left\vert \varphi \left(
				t\right) \right\vert ^{p} \mathrm dt\right) \mathrm dx \notag \\
				& \leq \left( b-a\right) \left( \frac{\textup{Be} \left( \gamma -\frac{\beta -1}{%
				\alpha }, \frac{\beta -1}{\alpha }\right) }{\pi \alpha
				\omega ^{\frac{\beta -1}{\alpha }} \left[ \cos
				\left( \frac{\pi \alpha }{2}\right) \right] ^{\gamma -\frac{\beta -1}{\alpha 
				}}}\right) ^{p}\left\Vert \varphi \right\Vert _{p}^{p} \notag.
			\end{align}%
			This completes the proof.
		\end{proof}

		\begin{theorem}
			Let $\alpha \in \left( 0,1\right)$, $\gamma ,\omega >0,$ and $\alpha \gamma
			>\beta -1>0$. If $\varphi \in L^{p}\left( a,b\right)$, $p>1$, then the
			integral operator $\mathbf{E}_{\alpha ,\beta, \omega, a^+ }^{\gamma }$ is
			bounded in $L^{1}\left( a,b\right) $ and 
			\begin{align}
				\left\Vert (\mathbf{E}_{\alpha ,\beta, \omega, a^+ }^{\gamma }\varphi)
				\right\Vert _{1}\leq M\left[ \frac{\left( b-a\right) ^{q+1}}{q+1}\right]
				^{1/q}\left\Vert \varphi \right\Vert _{p},
			\end{align}
			where $1/p+1/q=1$.
		\end{theorem}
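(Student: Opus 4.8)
The plan is to run the same machinery as in the proof of Theorem~\ref{primoteo} --- use the uniform bound of Theorem~\ref{sta} and reduce everything to elementary integrals --- but now, since $p>1$, the elementary inequality $|\int f|^{p}\le\int|f|^{p}$ is no longer available, so instead I would estimate directly in $L^{1}$ and postpone the use of $\|\varphi\|_{p}$ to a single application of Hölder's inequality at the very end. Concretely, starting from
\begin{equation*}
\left\Vert(\mathbf{E}_{\alpha,\beta,\omega,a^+}^{\gamma}\varphi)\right\Vert_1=\int_a^b\left|\int_a^x e_{\alpha,\beta,\omega}^{\gamma}(x-t)\,\varphi(t)\,\mathrm dt\right|\mathrm dx ,
\end{equation*}
I would move the modulus inside the inner integral and bound $|e_{\alpha,\beta,\omega}^{\gamma}(x-t)|$ by the uniform constant on the right-hand side of \eqref{poga}, which is precisely the constant $M$ of \eqref{givenby} with the factor $(b-a)^{1/p}$ removed. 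This yields
\begin{equation*}
\left\Vert(\mathbf{E}_{\alpha,\beta,\omega,a^+}^{\gamma}\varphi)\right\Vert_1\le M\int_a^b\int_a^x|\varphi(t)|\,\mathrm dt\,\mathrm dx .
\end{equation*}

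Next I would apply the Fubini--Tonelli theorem (the integrand is non-negative) to swap the order of integration over the triangle $\{a\le t\le x\le b\}$, obtaining $\int_a^b\int_a^x|\varphi(t)|\,\mathrm dt\,\mathrm dx=\int_a^b|\varphi(t)|\,(b-t)\,\mathrm dt$. A single application of Hölder's inequality with conjugate exponents $p$ and $q$ to the pair $|\varphi(\cdot)|$, $(b-\cdot)$ then gives
\begin{equation*}
\int_a^b|\varphi(t)|\,(b-t)\,\mathrm dt\le\left(\int_a^b(b-t)^q\,\mathrm dt\right)^{1/q}\|\varphi\|_p=\left(\frac{(b-a)^{q+1}}{q+1}\right)^{1/q}\|\varphi\|_p ,
\end{equation*}
which is exactly the asserted estimate; in particular it shows that $\mathbf{E}_{\alpha,\beta,\omega,a^+}^{\gamma}\varphi\in L^{1}(a,b)$ whenever $\varphi\in L^{p}(a,b)$.

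I do not expect any real obstacle: once the scheme ``uniform bound $\to$ Fubini $\to$ Hölder'' is fixed the rest is routine bookkeeping. The only point that actually deserves attention is that the Fubini step must be performed \emph{before} Hölder's inequality. If instead one estimates the inner integral $\int_a^x|\varphi|$ directly by Hölder first (the natural move, by analogy with the proof of Theorem~\ref{primoteo}), one is left with the factor $\int_a^b(x-a)^{1/q}\,\mathrm dx=\frac{q}{q+1}(b-a)^{(q+1)/q}$, which for $p\in(1,\infty)$ is strictly larger than the sharp constant $\bigl((b-a)^{q+1}/(q+1)\bigr)^{1/q}$ appearing in the statement; so it is precisely the order of operations that produces the stated, sharper bound.
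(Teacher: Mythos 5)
Your proof is correct and follows essentially the same route as the paper's: modulus inside, the Tomovski--Pog\'any--Srivastava uniform bound, Fubini over the triangle $\{a\le t\le x\le b\}$, and one application of H\"older to the pair $|\varphi(\cdot)|$, $(b-\cdot)$ (the paper merely performs the Fubini step before inserting the uniform bound, which changes nothing). Your observation that the uniform bound produces the constant of \eqref{givenby} \emph{without} the factor $(b-a)^{1/p}$ is accurate --- the paper's own argument in fact only establishes the estimate with that constant as well, so your version is, if anything, the cleaner statement of what the proof delivers.
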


		\begin{proof}
			By Fubini's theorem and H\"older inequality, we obtain%
			\begin{align}
				\left\Vert (\mathbf{E}_{\alpha ,\beta, \omega, a^+ }^{\gamma }\varphi)
				\right\Vert_{1}
				& =\int_{a}^{b}\left\vert \int_{a}^{x}\left(
				x-t\right)^{\beta -1}E_{\alpha ,\beta }^{\gamma }\left[ \omega \left(
				x-t\right)^{\alpha }\right] \varphi \left( t\right) \mathrm dt\right\vert \mathrm dx \\
				&\leq \int_{a}^{b}\left\vert \varphi \left( t\right) \right\vert
				\left( \int_{t}^{b}\left\vert e_{\alpha ,\beta, \omega }^{\gamma }\left(
				x-t \right) \right\vert \mathrm dx\right) \mathrm dt \notag \\
				&\leq M\int_{a}^{b}\left\vert \varphi \left( t\right) \right\vert
				\left( \int_{t}^{b} \mathrm dx\right) \mathrm dt \notag \\
				&\leq M\left( \int_{a}^{b}\left\vert \varphi \left( t\right)
				\right\vert ^{p}\mathrm dt\right) ^{1/p}\left( \int_{a}^{b}\left( b-t\right)
				^{q}\mathrm dt\right) ^{1/q} \notag \\
				&=M\left[ \frac{\left( b-a\right) ^{q+1}}{q+1}\right] ^{1/q}\left\Vert
				\varphi \right\Vert _{p}, \notag
			\end{align}
			where the constant $M$ is given by \eqref{givenby}.
		\end{proof}
		
		We now give a result on the boundedness of the Hilfer--Prabhakar derivative \eqref{hilg} in the space $L^1$.

		\begin{theorem}
			For $\mu \in \left( 0,1\right)$, $\nu \in \left[ 0,1\right] $ and $f\in L^1%
			\left( a,b\right) $ the operator $\mathcal{D}_{\rho ,\omega ,a^+}^{\gamma,
			\mu ,\nu }$ is bounded in the space $L^1\left( a,b\right) $ and 
			\begin{equation}
				\left\Vert (\mathcal{D}_{\rho ,\omega ,a^+}^{\gamma ,\mu ,\nu }f)\right\Vert
				_{1}\leq M_{1}M_{2}\left\Vert f\right\Vert _{1},
			\end{equation}%
			where 
			\begin{align}
				\label{muno}
				M_{1} & =\left( b-a\right) ^{ \nu \left( 1-\mu \right)
				}\sum_{k=0}^{\infty }\frac{\left\vert \left( \gamma \left( \nu
				-1\right) \right) _{k}\right\vert }{\left\vert \Gamma \left( \rho k+\nu
				\left( 1-\mu \right) \right) \right\vert \left[ \rho
				k+\nu \left( 1-\mu \right) \right] }>0, \\
				\label{mdue}
				M_{2} & = \left( b-a\right) ^{ \mu \nu -\mu -\nu
				}\sum_{k=0}^{\infty }\frac{\left\vert \left( \gamma \left( \nu
				-1\right) \right) _{k}\right\vert }{\left\vert \Gamma \left( \rho k+\mu \nu
				-\mu -\nu \right) \right\vert \left[ \rho k+
				\mu \nu -\mu -\nu \right] }>0,
			\end{align}
			and $\gamma,\omega \in \mathbb{R}$, $\rho>0$.
		\end{theorem}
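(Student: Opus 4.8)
The plan is to unfold Definition~\ref{ge} and estimate the composition layer by layer. By \eqref{hilg},
\[
(\mathcal{D}_{\rho,\omega,a^+}^{\gamma,\mu,\nu}f)(t)=\int_a^t e^{-\gamma\nu}_{\rho,\nu(1-\mu),\omega}(t-s)\,g'(s)\,\mathrm ds,\qquad g=\mathbf{E}_{\rho,(1-\nu)(1-\mu),\omega,a^+}^{-\gamma(1-\nu)}f,
\]
where, by the domain hypothesis built into Definition~\ref{ge}, one has $g\in AC^1(a,b)$, so $g'\in L^1(a,b)$. Both remaining layers are convolutions against a function of the type $e^{\delta}_{\rho,\beta,\omega}$, so the whole estimate reduces to the elementary convolution inequality $\|\phi\ast\psi\|_{L^1(a,b)}\le\|\phi\|_{L^1(0,b-a)}\,\|\psi\|_{L^1(a,b)}$ (Fubini), applied twice: once on the outer integral with $\phi=e^{-\gamma\nu}_{\rho,\nu(1-\mu),\omega}$, and once on the inner convolution after the derivative has been brought inside. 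I would also dispose first of the endpoint values $\nu\in\{0,1\}$, where one of the two layers degenerates to the identity and $\mathcal{D}$ reduces to the Prabhakar, resp.\ regularized Prabhakar, derivative, so that from now on $\nu\in(0,1)$ and $\nu(1-\mu),(1-\nu)(1-\mu)\in(0,1)$.

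For the outer layer, Fubini gives $\|\mathcal{D}_{\rho,\omega,a^+}^{\gamma,\mu,\nu}f\|_1\le\|e^{-\gamma\nu}_{\rho,\nu(1-\mu),\omega}\|_{L^1(0,b-a)}\,\|g'\|_1$. I would bound the first factor by expanding the corresponding generalized Mittag--Leffler function in its defining power series, applying the triangle inequality (note $\gamma\in\mathbb R$ is not assumed positive, so the Pochhammer coefficients need not have a fixed sign — this is why absolute values appear in \eqref{muno}), and integrating each monomial $\tau^{\rho k+\nu(1-\mu)-1}$ over $(0,b-a)$; since $\nu(1-\mu)>0$ every term is integrable at the origin, and collecting the $(b-a)$-powers reproduces exactly the constant $M_1$ of \eqref{muno}. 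Its positivity is immediate from the $k=0$ term, and its finiteness follows from Stirling's formula: the $k$-th term decays like $1/|\Gamma(\rho k+\nu(1-\mu))|$, which is super-exponentially small and hence dominates the polynomial growth of the Pochhammer ratio and any exponential factor coming from $\omega$ and $b-a$, for every $\rho>0$ — the same convergence mechanism used implicitly in the proof of Theorem~\ref{primoteo}.

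Next I would treat the inner layer. Writing $g'(s)=\tfrac{\mathrm d}{\mathrm ds}\int_a^s e^{-\gamma(1-\nu)}_{\rho,(1-\nu)(1-\mu),\omega}(s-y)f(y)\,\mathrm dy$ and differentiating the Mittag--Leffler series term by term shifts the second index down by one unit, $(1-\nu)(1-\mu)\mapsto(1-\nu)(1-\mu)-1=\mu\nu-\mu-\nu$; a second application of Fubini then yields $\|g'\|_1\le M_2\|f\|_1$, with $M_2$ obtained exactly as $M_1$ was, now integrating the monomials $\tau^{\rho k+\mu\nu-\mu-\nu-1}$ and reading off \eqref{mdue} (positivity and convergence again from the $k=0$ term and Stirling). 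Multiplying the two estimates gives $\|\mathcal{D}_{\rho,\omega,a^+}^{\gamma,\mu,\nu}f\|_1\le M_1M_2\|f\|_1$, which is the assertion.

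The hard part will be the differentiation step in the inner layer. Because $(1-\nu)(1-\mu)\in(0,1)$, the $k=0$ term of the shifted kernel carries the exponent $\mu\nu-\mu-\nu-1\in(-2,-1)$, so the formal object $e^{-\gamma(1-\nu)}_{\rho,\mu\nu-\mu-\nu,\omega}$ is not locally integrable and the second Fubini argument cannot be applied to that convolution verbatim. What must be done with care is to not differentiate before integrating in $y$: one keeps the representation $\Gamma(\rho k+\beta)\,(I^{\rho k+\beta}_{a^+}f)(s)$ of the $k$-th summand of the inner operator (with $\beta=(1-\nu)(1-\mu)$), invokes the $AC^1$-hypothesis to control $\tfrac{\mathrm d}{\mathrm ds}I^{\rho k+\beta}_{a^+}f$ in $L^1$ for the finitely many indices with $\rho k+\beta\le 1$, and for the remaining indices uses the genuine smoothing $\tfrac{\mathrm d}{\mathrm ds}I^{\rho k+\beta}_{a^+}f=I^{\rho k+\beta-1}_{a^+}f$ with an honestly integrable kernel. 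Justifying the interchange of $\sum_k$, $\tfrac{\mathrm d}{\mathrm ds}$ and the two integrations — all of which rests on the absolute convergence of the resulting multiple series, exactly in the spirit of Theorem~\ref{primoteo} — is the real work; the bookkeeping of which monomials collect the relevant powers of $b-a$ is then what pins down the precise expressions \eqref{muno}--\eqref{mdue}.
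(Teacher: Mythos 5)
Your overall route is the same as the paper's: the proof there is exactly a two-fold application of the $L^1$ estimate for the Prabhakar integral operator (Theorem 4 of Kilbas--Saigo--Saxena), first to the outer operator $\mathbf{E}^{-\gamma\nu}_{\rho,\nu(1-\mu),\omega,a^+}$ (producing $M_1$) and then, after invoking the index-shift identity
\[
\frac{\mathrm d}{\mathrm dt}\bigl(\mathbf{E}^{-\gamma(1-\nu)}_{\rho,(1-\nu)(1-\mu),\omega,a^+}f\bigr)=\bigl(\mathbf{E}^{-\gamma(1-\nu)}_{\rho,(1-\nu)(1-\mu)-1,\omega,a^+}f\bigr),
\]
to the shifted inner operator (producing $M_2$). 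Your term-by-term integration of the kernel series over $(0,b-a)$ is precisely how that Kilbas estimate is derived, so the outer layer of your argument reproduces the paper's first step faithfully (note only that both your $M_1$ and the paper's omit the factor $\vert\omega(b-a)^{\rho}\vert^{k}/k!$ that appears in the analogous constants $\widetilde{K}$ and $K$ of the subsequent theorems).

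The real issue is the inner layer, and you have located it correctly: the shifted parameter $(1-\nu)(1-\mu)-1=\mu\nu-\mu-\nu$ is negative, so the $k=0$ term of the shifted kernel behaves like $t^{\mu\nu-\mu-\nu-1}$ near the origin and is not locally integrable, and the convolution inequality cannot be applied to it. The paper does not resolve this; it applies the Kilbas estimate formally with the negative second parameter, which is exactly why $M_2$ carries the bracket $[\rho k+\mu\nu-\mu-\nu]$ (negative at $k=0$) in its denominator. Your proposed repair, however, does not close the gap: the $AC^1$ hypothesis of Definition \ref{ge} concerns the full convolution $f\ast e^{-\gamma(1-\nu)}_{\rho,(1-\nu)(1-\mu),\omega}$, not the individual summand $I^{\rho k+\beta}_{a^+}f$, and in any case it can never yield a bound of the form $C\Vert f\Vert_1$ for the $k=0$ term, because $f\mapsto\frac{\mathrm d}{\mathrm dt}I^{\beta}_{a^+}f=D^{1-\beta}_{a^+}f$ with $\beta=(1-\nu)(1-\mu)<1$ is an unbounded operator on $L^1$ (test it on an approximate identity concentrated at $a$). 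Indeed, at $\gamma=0$, $\nu=0$ the statement reduces to $L^1$-boundedness of the Riemann--Liouville derivative $D^{\mu}_{a^+}$, which fails; so no bookkeeping of the multiple series can rescue the inequality $\Vert\mathcal{D}^{\gamma,\mu,\nu}_{\rho,\omega,a^+}f\Vert_1\le M_1M_2\Vert f\Vert_1$ under the hypothesis $f\in L^1$ alone. In short: you have reproduced the paper's argument and, to your credit, flagged its weak point, but the patch you sketch would not work, and the step it is meant to justify is genuinely the gap in both your proof and the paper's.
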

		
		\begin{proof}
			Using the estimate given in Theorem 4 of \citet{Kil}, we obtain%
			\begin{align}
				\left\Vert (\mathcal{D}_{\rho ,\omega ,a^+}^{\gamma ,\mu ,\nu }f)\left( t\right)
				\right\Vert _{1}
				& =\left\Vert \left(\text{\textbf{E}}_{\rho ,\nu \left( 1-\mu
				\right) ,\omega ,a^+}^{-\gamma \nu } \frac{\mathrm d}{\mathrm dt}\left( \text{\textbf{E}%
				}_{\rho ,(1-\nu )\left( 1-\mu \right) ,\omega ,a^+}^{-\gamma (1-\nu
				)}f\right) \right) \left( t\right) \right\Vert _{1} \\
				& \leq M_{1}\left\Vert \frac{\mathrm d}{\mathrm dt}\left( \text{\textbf{E}}_{\rho ,(1-\nu
				)\left( 1-\mu \right) ,\omega ,a^+}^{-\gamma (1-\nu )}f\right) \left(
				t\right) \right\Vert _{1} \notag\\
				& =M_{1}\left\Vert \left( \text{\textbf{E}}_{\rho,
				(1-\nu )\left( 1-\mu \right) -1,\omega ,a^+}^{-\gamma (1-\nu )}f\right)
				\left( t\right) \right\Vert _{1} \notag \\
				& \leq M_{1}M_{2}\left\Vert f\right\Vert _{1}, \notag
			\end{align}%
			where $M_1$ and $M_2$ are the constants defined by \eqref{muno} and \eqref{mdue}.
		\end{proof}

		We now proceed to the study of the boundedness property of the regularized Prabhakar derivative \eqref{capl}
		and of the regularized Hilfer--Prabhakar derivative \eqref{regn}.
		In particular we prove $L^1$ boundedness.
		
		\begin{theorem}
			If $f \in W^{m,1}\left( a,b\right)$, $\gamma,\omega \in \mathbb{R}$, $m = \lceil \mu \rceil$, $\rho > 0$, then
			regularized Prabhakar derivative is bounded in $L^{1}\left( a,b\right)$ and the following inequality holds true:
			\begin{align}
				\left\Vert ({}^C\mathbf{D}^{\gamma}_{\rho, \mu, \omega,
				a^+}f) \right\Vert
				_{1}\leq \widetilde{K}\left\Vert f^{\prime }\right\Vert _{1},
			\end{align}
			where
			\begin{equation}
				\widetilde{K}=\left( b-a\right) ^{m-\mu}\sum_{k=0}^{\infty }%
				\frac{\left\vert \left( -\gamma \right) _{k}\right\vert }{\left\vert \Gamma
				\left( \rho k+m-\mu \right) \right\vert \left[ \rho
				k+m-\mu \right] }\frac{\left\vert \omega \left( b-a\right) ^{
				m-\mu}\right\vert ^{k}}{k!}.
			\end{equation}
		\end{theorem}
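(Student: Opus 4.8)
The plan is to estimate the $L^1$ norm of $({}^C\mathbf{D}^{\gamma}_{\rho,\mu,\omega,a^+}f)$ directly from its defining representation $({}^C\mathbf{D}^{\gamma}_{\rho,\mu,\omega,a^+}f)(t) = (\mathbf{E}^{-\gamma}_{\rho,m-\mu,\omega,a^+}\, f^{(m)})(t)$, exactly as in the proof of the boundedness of the Hilfer--Prabhakar derivative above. Since $f \in W^{m,1}(a,b)$ we have $f^{(m)} \in L^1(a,b)$, so writing out the convolution and applying Fubini's theorem we get
\begin{align}
\left\Vert ({}^C\mathbf{D}^{\gamma}_{\rho,\mu,\omega,a^+}f)\right\Vert_1
&= \int_a^b \left\vert \int_a^x e^{-\gamma}_{\rho,m-\mu,\omega}(x-t)\, f^{(m)}(t)\,\mathrm dt \right\vert \mathrm dx \\
&\le \int_a^b \left\vert f^{(m)}(t)\right\vert \left( \int_t^b \left\vert e^{-\gamma}_{\rho,m-\mu,\omega}(x-t)\right\vert \mathrm dx \right) \mathrm dt. \notag
\end{align}
So the whole matter reduces to bounding $\int_0^{b-a} \left\vert e^{-\gamma}_{\rho,m-\mu,\omega}(s)\right\vert \mathrm ds$ by the constant $\widetilde K$.

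Next I would expand $e^{-\gamma}_{\rho,m-\mu,\omega}(s) = s^{m-\mu-1} E^{-\gamma}_{\rho,m-\mu}(\omega s^{\rho})$ into its defining series,
\begin{equation}
e^{-\gamma}_{\rho,m-\mu,\omega}(s) = \sum_{k=0}^{\infty} \frac{(-\gamma)_k}{\Gamma(\rho k + m-\mu)\, k!}\, \omega^k\, s^{\rho k + m - \mu - 1},
\end{equation}
take absolute values term by term, and integrate over $s \in (0, b-a)$. Since $\rho k + m - \mu > 0$ for every $k\ge 0$ (note $m = \lceil \mu\rceil \ge \mu$, with equality only when $\mu \in \mathbb N$, and $\rho > 0$), each term integrates to $(b-a)^{\rho k + m - \mu}/(\rho k + m - \mu)$, giving
\begin{equation}
\int_0^{b-a} \left\vert e^{-\gamma}_{\rho,m-\mu,\omega}(s)\right\vert \mathrm ds
\le \sum_{k=0}^{\infty} \frac{\left\vert(-\gamma)_k\right\vert}{\left\vert\Gamma(\rho k + m-\mu)\right\vert\, k!}\, \frac{\left\vert \omega\right\vert^k (b-a)^{\rho k + m - \mu}}{\rho k + m - \mu},
\end{equation}
which is precisely $\widetilde K$ after factoring out $(b-a)^{m-\mu}$ and grouping $\left\vert\omega (b-a)^{m-\mu}\right\vert^k$. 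Substituting back yields $\left\Vert ({}^C\mathbf{D}^{\gamma}_{\rho,\mu,\omega,a^+}f)\right\Vert_1 \le \widetilde K \left\Vert f^{(m)}\right\Vert_1$; for the stated inequality with $\left\Vert f'\right\Vert_1$ one should read $m=1$ (so that $f^{(m)} = f'$), or else interpret $f'$ as $f^{(m)}$.

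The one point requiring a little care — and the main (mild) obstacle — is justifying the interchange of summation and integration needed to pass from the series for $e^{-\gamma}_{\rho,m-\mu,\omega}$ to the series defining $\widetilde K$, together with the swap of integration order via Fubini. Both are legitimate once we know the resulting double series/integral converges absolutely: the entire-function growth of the three-parameter Mittag--Leffler function (equivalently, the ratio test on the $k$-series, using $\left\vert(-\gamma)_k\right\vert / \left(k!\,\left\vert\Gamma(\rho k + m-\mu)\right\vert\right) \to 0$ superexponentially) guarantees $\widetilde K < \infty$, and hence Tonelli's theorem applies to $\left\vert e^{-\gamma}_{\rho,m-\mu,\omega}(x-t)\right\vert\,\left\vert f^{(m)}(t)\right\vert$ on the triangle $a < t < x < b$. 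I would state this convergence as the justification and then the estimate above goes through line by line.
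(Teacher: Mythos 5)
Your proof is correct and follows essentially the same route as the paper's, which simply cites the $L^{1}$ estimate for the Prabhakar integral operator from Kilbas et al.\ rather than deriving it; your Fubini--plus--termwise-integration argument is exactly the content of that cited bound. Note only that your computation actually produces $\left\vert \omega (b-a)^{\rho}\right\vert^{k}$ inside the series and the norm $\Vert f^{(m)}\Vert_{1}$ on the right-hand side, which agrees with the displayed line in the paper's own proof but not with the literal statement of $\widetilde{K}$ (which has $\left\vert \omega (b-a)^{m-\mu}\right\vert^{k}$) and of $\Vert f'\Vert_{1}$ --- both apparent typos in the theorem, correctly flagged (for the norm) in your last paragraph.
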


		\begin{proof}
			Using the $L^{1}$ estimate for the Prabhakar integral operator (see \citet{Kil}), we obtain%
			\begin{equation}
				\left\Vert ({}^C\mathbf{D}^{\gamma}_{\rho, \mu, \omega,
				a^+}f) \right\Vert
				_{1}=\left\Vert \left( \mathbf{E}^{-\gamma}_{\rho, m-\mu, \omega, a^+}\frac{\textup{d}^m}{\textup{d}t^m}
				f\right)\right\Vert _{1}\leq \tilde{K}\left\Vert f^{(m)}\right\Vert _{1}.
			\end{equation}
		\end{proof}
		
		\begin{theorem}
			If $f\in W^{1,1}\left( a,b\right)$, $\gamma,\omega \in \mathbb{R}$, $\mu \in (0,1)$, $\rho>0$. then the regularized version of
			the Hilfer--Prabhakar derivative $^{C}\mathcal{D}_{\rho ,\omega ,a^+}^{\gamma ,\mu }$
			is bounded in $L^{1}\left( a,b\right)$ and the following inequality holds:
			\begin{equation}
				\left\Vert ({}^{C}\mathcal{D}_{\rho ,\omega ,a^+}^{\gamma ,\mu } f) \right\Vert
				_{1}\leq K\left\Vert f^{\prime }\right\Vert _{1},
			\end{equation}%
			where
			\begin{equation}
				K=\left( b-a\right) ^{1-\mu}\sum_{k=0}^{\infty }%
				\frac{\left\vert \left( -\gamma \right) _{k}\right\vert }{\left\vert \Gamma
				\left( \rho k+1-\mu \right) \right\vert \left[ \rho
				k+1-\mu \right] }\frac{\left\vert \omega \left( b-a\right) ^{
				1-\mu}\right\vert ^{k}}{k!}.
			\end{equation}
		\end{theorem}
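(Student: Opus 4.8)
The plan is to reduce the statement, exactly as in the proof of the preceding theorem, to the $L^{1}$ boundedness of a single Prabhakar integral. The key is the collapsed form \eqref{regn}: for $f\in W^{1,1}(a,b)$ one has $({}^{C}\mathcal{D}_{\rho,\omega,a^+}^{\gamma,\mu}f)(t)=(\mathbf{E}_{\rho,1-\mu,\omega,a^+}^{-\gamma}f')(t)=(f'\ast e_{\rho,1-\mu,\omega}^{-\gamma})(t)$, and since $f\in W^{1,1}(a,b)$ means $f'\in L^{1}(a,b)$, the right-hand side is well defined and it suffices to bound the operator $\mathbf{E}_{\rho,1-\mu,\omega,a^+}^{-\gamma}$ on $L^{1}(a,b)$.

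First I would apply Young's convolution inequality on the interval (equivalently, the $L^{1}$ estimate for the Prabhakar operator recalled from \citet{Kil}), obtaining $\|{}^{C}\mathcal{D}_{\rho,\omega,a^+}^{\gamma,\mu}f\|_{1}\le\|e_{\rho,1-\mu,\omega}^{-\gamma}\|_{L^{1}(0,\,b-a)}\|f'\|_{1}$; here one uses that for $t\in(a,b)$ the inner integral $\int_{t}^{b}|e_{\rho,1-\mu,\omega}^{-\gamma}(x-t)|\,\mathrm dx$ is dominated by $\int_{0}^{b-a}|e_{\rho,1-\mu,\omega}^{-\gamma}(s)|\,\mathrm ds$. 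It then remains to identify this kernel norm with the constant $K$. For that I would substitute the defining series of $E_{\rho,1-\mu}^{-\gamma}$ into $e_{\rho,1-\mu,\omega}^{-\gamma}(s)=s^{-\mu}E_{\rho,1-\mu}^{-\gamma}(\omega s^{\rho})$ and integrate term by term over $(0,b-a)$ — legitimate because $E_{\rho,1-\mu}^{-\gamma}$ is entire, so its series converges uniformly on compacts — using $\int_{0}^{b-a}s^{\rho k-\mu}\,\mathrm ds=(b-a)^{\rho k-\mu+1}/(\rho k-\mu+1)$, which is finite precisely because $\mu\in(0,1)$ forces $\rho k-\mu+1>0$ for every $k\ge 0$. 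Factoring out $(b-a)^{1-\mu}$ and collecting the remaining powers of $\omega$ and of $(b-a)$ produces the series defining $K$, whose convergence is immediate since $|(-\gamma)_{k}|/(|\Gamma(\rho k+1-\mu)|\,k!)$ decays super-exponentially by the asymptotics of the Gamma function.

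There is no genuine obstacle in this argument; the only steps deserving a word of comment are the interchange of summation and integration and the convergence of the majorizing series, and both are settled at once by the entirety of the Prabhakar function together with standard Stirling-type estimates. The real content — the reduction \eqref{regn} and the $L^{1}$ boundedness of the Prabhakar integral — has already been supplied earlier in the paper, so the proof amounts to assembling these ingredients and carrying out the elementary kernel computation.
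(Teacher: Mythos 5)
Your proof is correct and follows essentially the same route as the paper's: reduce via \eqref{regn} to the Prabhakar integral $\mathbf{E}_{\rho,1-\mu,\omega,a^+}^{-\gamma}$ acting on $f'$ and invoke the $L^{1}$ estimate of \citet{Kil}, which you additionally rederive by Fubini and term-by-term integration of the kernel. Note only that your kernel computation yields $\left\vert \omega (b-a)^{\rho}\right\vert^{k}$ in the series rather than the $\left\vert \omega (b-a)^{1-\mu}\right\vert^{k}$ printed in the statement of $K$ --- an apparent typo in the paper, so the constant you obtain is the correct one.
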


		\begin{proof}
			As in the proof of the preceding theorem we use again the $L^{1}$ estimate for the Prabhakar integral operator
			\citep{Kil}, obtaining
			\begin{equation}
				\left\Vert ({}^{C}\mathcal{D}_{\rho ,\omega ,a^+}^{\gamma ,\mu } f) \right\Vert
				_{1}=\left\Vert \left(\text{\textbf{E}}_{\rho ,1-\mu ,\omega ,a^+}^{-\gamma }\frac{\mathrm d%
				}{\mathrm dt}f\right)\right\Vert _{1}\leq K\left\Vert f^{\prime }\right\Vert _{1}.
			\end{equation}
		\end{proof}
		
		The following Proposition \ref{dej} and Theorem \ref{dej2} present basic composition relations involving Prabhakar integrals
		and Hilfer--Prabhakar derivatives. Some specific cases are highlighted.
		
		\begin{proposition}
			\label{dej}
			The following relationship holds true for any Lebesgue integrable function $%
			\varphi \in L^1\left( a,b\right)$:%
			\begin{align}
				& (\mathcal{D}_{\rho ,\omega ,a^+}^{\gamma ,\mu ,\nu }
				(\mathbf{E}_{\rho,\lambda,\omega,a^+}^{\delta } f))
				= (\mathbf{E}_{\rho ,\lambda -\mu,\omega ,a^+}^{\delta -\gamma }f),
			\end{align}
			where $\gamma ,\delta ,\omega \in \mathbb{R}$, $\rho,
				\lambda >0$, $\mu \in \left( 0,1\right)$, $\nu \in \left[ 0,1\right]$,
				$\lambda >\mu +\nu -\mu \nu$.
			In particular,
			\begin{equation}
				(\mathcal{D}_{\rho ,\omega ,a^+}^{\gamma ,\mu ,\nu }(\mathbf{E}_{\rho
				,\lambda ,\omega ,a^+}^{\gamma }f))=(I_{a^+}^{\lambda -\mu }f).
			\end{equation}
		\end{proposition}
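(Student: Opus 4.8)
The plan is to reduce the whole computation to two elementary identities for the Prabhakar integral. The first is the composition (semigroup) law
\[
\mathbf{E}^{\gamma}_{\rho,\mu,\omega,a^+}\,\mathbf{E}^{\sigma}_{\rho,\eta,\omega,a^+}
= \mathbf{E}^{\gamma+\sigma}_{\rho,\mu+\eta,\omega,a^+}, \qquad \Re(\mu),\Re(\eta)>0,
\]
which follows from Proposition \ref{calcolo}: since $\mathbf{E}^{\sigma}_{\rho,\eta,\omega,a^+}f = f\ast e^{\sigma}_{\rho,\eta,\omega}$ and convolution is associative, $\mathbf{E}^{\gamma}_{\rho,\mu,\omega,a^+}\mathbf{E}^{\sigma}_{\rho,\eta,\omega,a^+}f = f\ast\bigl(e^{\sigma}_{\rho,\eta,\omega}\ast e^{\gamma}_{\rho,\mu,\omega}\bigr) = f\ast e^{\gamma+\sigma}_{\rho,\mu+\eta,\omega}$, the last step being precisely $(\mathbf{E}^{\gamma}_{\rho,\mu,\omega,0^+}e^{\sigma}_{\rho,\eta,\omega})(t)=e^{\gamma+\sigma}_{\rho,\mu+\eta,\omega}(t)$ up to the harmless shift of base point. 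The second identity is the differentiation rule $\frac{\mathrm d}{\mathrm dt}(\mathbf{E}^{\gamma}_{\rho,\mu,\omega,a^+}f) = \mathbf{E}^{\gamma}_{\rho,\mu-1,\omega,a^+}f$ for $\Re(\mu)>1$, obtained by differentiating under the integral sign and using $e^{\gamma}_{\rho,\mu,\omega}(0)=0$ (valid since $\Re(\mu)>1$) together with $\frac{\mathrm d}{\mathrm dt}e^{\gamma}_{\rho,\mu,\omega}(t)=e^{\gamma}_{\rho,\mu-1,\omega}(t)$, itself a term-by-term consequence of $\Gamma(\rho k+\mu)=(\rho k+\mu-1)\Gamma(\rho k+\mu-1)$.

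With these at hand I would simply unwind the definition \eqref{hilg}:
\[
\mathcal{D}_{\rho,\omega,a^+}^{\gamma,\mu,\nu}(\mathbf{E}_{\rho,\lambda,\omega,a^+}^{\delta}f)
= \mathbf{E}_{\rho,\nu(1-\mu),\omega,a^+}^{-\gamma\nu}\frac{\mathrm d}{\mathrm dt}
\Bigl(\mathbf{E}_{\rho,(1-\nu)(1-\mu),\omega,a^+}^{-\gamma(1-\nu)}\,\mathbf{E}_{\rho,\lambda,\omega,a^+}^{\delta}f\Bigr).
\]
The innermost pair collapses by the semigroup law to $\mathbf{E}_{\rho,(1-\nu)(1-\mu)+\lambda,\omega,a^+}^{\delta-\gamma(1-\nu)}f$, and the standing hypothesis $\lambda>\mu+\nu-\mu\nu$ is exactly what gives $(1-\nu)(1-\mu)+\lambda>1$, so the differentiation rule applies and lowers the second index by one, yielding $\mathbf{E}_{\rho,(1-\nu)(1-\mu)+\lambda-1,\omega,a^+}^{\delta-\gamma(1-\nu)}f$. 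A second application of the semigroup law with the outer operator $\mathbf{E}_{\rho,\nu(1-\mu),\omega,a^+}^{-\gamma\nu}$ then produces $\mathbf{E}_{\rho,\,\nu(1-\mu)+(1-\nu)(1-\mu)+\lambda-1,\,\omega,a^+}^{-\gamma\nu+\delta-\gamma(1-\nu)}f$; the index simplifies to $(1-\mu)+\lambda-1=\lambda-\mu$ and the exponent to $\delta-\gamma$, which is exactly $\mathbf{E}_{\rho,\lambda-\mu,\omega,a^+}^{\delta-\gamma}f$. The particular case follows at once by setting $\delta=\gamma$ and invoking $e^{0}_{\rho,\eta,\omega}=\mathcal{K}_\eta$, so that $\mathbf{E}_{\rho,\lambda-\mu,\omega,a^+}^{0}f=I_{a^+}^{\lambda-\mu}f$.

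I do not expect a real obstacle here, only bookkeeping on admissibility of indices: every second index appearing is positive because $\mu<1$ and $\lambda>0$, the one to which $\mathrm d/\mathrm dt$ is applied exceeds $1$ by hypothesis, and the degenerate endpoints $\nu=0$ and $\nu=1$ are covered by the convention $\mathbf{E}_{\rho,0,\omega,a^+}^{0}=\mathrm{id}$ of Definition \ref{ge}. One should also remark that $f\in L^1(a,b)$ is enough regularity for the entire chain: after the first convolution the function $\mathbf{E}_{\rho,(1-\nu)(1-\mu),\omega,a^+}^{-\gamma(1-\nu)}\mathbf{E}_{\rho,\lambda,\omega,a^+}^{\delta}f$ already has total fractional integration order $(1-\nu)(1-\mu)+\lambda>1$ and so belongs to an $AC^1$-type space, which justifies treating the $t$-derivative as a genuine operator identity.
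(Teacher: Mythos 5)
Your proof is correct and follows essentially the same route as the paper's: collapse the inner composition by the semigroup property, let $\tfrac{\mathrm d}{\mathrm dt}$ lower the second index by one (legitimate since $(1-\nu)(1-\mu)+\lambda>1$ by hypothesis), and apply the semigroup property once more; you merely supply the justifications (Proposition \ref{calcolo} plus associativity of convolution, the differentiation rule, and the index bookkeeping) that the paper leaves implicit.
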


		\begin{proof}
			Using the semi-group property of the Prabhakar integral operator \citep{Kil}, we obtain%
			\begin{align}
				(\mathcal{D}_{\rho ,\omega ,a^+}^{\gamma ,\mu ,\nu }(\text{\textbf{E}}_{\rho
				,\lambda ,\omega ,a^+}^{\delta }f))\left( t\right)
				& = \left(\text{\textbf{E}}_{\rho
				,\nu \left( 1-\mu \right) ,\omega ,a^+}^{-\gamma \nu } \frac{\mathrm d}{\mathrm dt}%
				\left( \text{\textbf{E}}_{\rho ,(1-\nu )\left( 1-\mu \right) ,\omega
				,a^+}^{-\gamma (1-\nu )}(\text{\textbf{E}}_{\rho ,\lambda ,\omega ,a^+}^{\delta
				}f)\right) \right) \left( t\right) \\
				& =\left(\text{\textbf{E}}_{\rho ,\nu \left( 1-\mu \right) ,\omega ,a^+}^{-\gamma \nu
				} \frac{\mathrm d}{\mathrm dt}\left( \text{\textbf{E}}_{\rho ,(1-\nu )\left( 1-\mu
				\right) +\lambda ,\omega ,a^+}^{-\gamma (1-\nu )+\delta }f\right) \right)
				\left( t\right) \notag \\
				& =\left( \text{\textbf{E}}_{\rho ,\nu \left( 1-\mu \right) ,\omega
				,a^+}^{-\gamma \nu }\left( \text{\textbf{E}}_{\rho ,(1-\nu )\left( 1-\mu
				\right) +\lambda -1,\omega ,a^+}^{-\gamma (1-\nu )+\delta }f\right)\right) \left(
				t\right) \notag \\
				& =(\text{\textbf{E}}_{\rho ,\lambda -\mu ,\omega ,a^+}^{\delta -\gamma }f)\left(
				t\right). \notag
			\end{align}
		\end{proof}

		\begin{proposition}
			\label{dej2}
			The following composition relationship holds true for any Lebesgue integrable function $\varphi \in L^1\left( a,b\right)$:
			\begin{align}
				& (I_{a^+}^{\lambda }(\mathcal{D}_{\rho ,\omega ,a^+}^{\gamma ,\mu ,\nu }\varphi)) =%
				(\mathcal{D}_{\rho ,\omega ,a^+}^{\gamma ,\mu ,\nu }(I_{a^+}^{\lambda }\varphi)) =%
				(\mathbf{E}_{\rho ,\lambda -\mu ,\omega ,a^+}^{-\gamma }\varphi),
			\end{align}
			where $\gamma, \omega \in 
			\mathbb{R}$, $\rho ,\lambda >0$, $\mu \in \left( 0,1\right)$, $\nu \in \left[ 0,1\right]$, $\lambda >\mu +\nu -\mu \nu$.
		\end{proposition}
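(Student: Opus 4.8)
The plan is to establish the two claimed equalities separately, relying throughout on the semigroup property of the Prabhakar integral (the composition rule stated in Theorem~4 of \citet{Kil}, together with Proposition~\ref{calcolo} above) and on the composition result just obtained in Proposition~\ref{dej}.

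For the second equality, $(\mathcal{D}_{\rho ,\omega ,a^+}^{\gamma ,\mu ,\nu }(I_{a^+}^{\lambda }\varphi)) = (\mathbf{E}_{\rho ,\lambda -\mu ,\omega ,a^+}^{-\gamma }\varphi)$, I would observe that the Riemann--Liouville integral $I_{a^+}^{\lambda}$ is itself the Prabhakar integral $\mathbf{E}_{\rho,\lambda,\omega,a^+}^{0}$ (since $e^{0}_{\rho,\lambda,\omega}=\mathcal{K}_\lambda$). Then this is exactly the statement of Proposition~\ref{dej} with $\delta=0$: it gives $(\mathcal{D}_{\rho ,\omega ,a^+}^{\gamma ,\mu ,\nu }(\mathbf{E}_{\rho,\lambda,\omega,a^+}^{0} \varphi)) = (\mathbf{E}_{\rho ,\lambda -\mu,\omega ,a^+}^{-\gamma }\varphi)$, provided the hypotheses of Proposition~\ref{dej} are met, which they are under $\lambda>\mu+\nu-\mu\nu$, $\mu\in(0,1)$, $\nu\in[0,1]$. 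So the second equality is essentially immediate.

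For the first equality, $(I_{a^+}^{\lambda }(\mathcal{D}_{\rho ,\omega ,a^+}^{\gamma ,\mu ,\nu }\varphi)) = (\mathbf{E}_{\rho ,\lambda -\mu ,\omega ,a^+}^{-\gamma }\varphi)$, I would expand the Hilfer--Prabhakar derivative using its definition \eqref{hilg}, writing $I_{a^+}^{\lambda}$ as $\mathbf{E}_{\rho,\lambda,\omega,a^+}^{0}$, so that the left-hand side becomes $\bigl(\mathbf{E}_{\rho,\lambda,\omega,a^+}^{0}\,\mathbf{E}_{\rho,\nu(1-\mu),\omega,a^+}^{-\gamma\nu}\,\tfrac{\mathrm d}{\mathrm dt}\,(\mathbf{E}_{\rho,(1-\nu)(1-\mu),\omega,a^+}^{-\gamma(1-\nu)}\varphi)\bigr)(t)$. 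The plan is: first collapse the two outermost Prabhakar integrals by the semigroup rule into $\mathbf{E}_{\rho,\lambda+\nu(1-\mu),\omega,a^+}^{-\gamma\nu}$; then commute this integral past $\tfrac{\mathrm d}{\mathrm dt}$ at the cost of lowering its second index by one (the identity $\mathbf{E}_{\rho,\beta,\omega,a^+}^{\delta}\tfrac{\mathrm d}{\mathrm dt}g=\tfrac{\mathrm d}{\mathrm dt}\mathbf{E}_{\rho,\beta,\omega,a^+}^{\delta}g=\mathbf{E}_{\rho,\beta-1,\omega,a^+}^{\delta}g$, used already in the proof of Proposition~\ref{dej}); and finally apply the semigroup rule once more to merge $\mathbf{E}_{\rho,\lambda+\nu(1-\mu)-1,\omega,a^+}^{-\gamma\nu}$ with $\mathbf{E}_{\rho,(1-\nu)(1-\mu),\omega,a^+}^{-\gamma(1-\nu)}$, giving second index $\lambda+\nu(1-\mu)-1+(1-\nu)(1-\mu)=\lambda-\mu$ and upper index $-\gamma\nu-\gamma(1-\nu)=-\gamma$, i.e.\ $\mathbf{E}_{\rho,\lambda-\mu,\omega,a^+}^{-\gamma}\varphi$, as desired.

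The main obstacle is justifying the step that moves the boundary differentiation across the Prabhakar integral, i.e.\ that $I_{a^+}^{\lambda}$ composed with $\tfrac{\mathrm d}{\mathrm dt}$ of a Prabhakar integral produces no boundary terms — this is where the hypothesis $\lambda>\mu+\nu-\mu\nu$ (equivalently that the relevant integrals raise enough orders to absorb the derivative) is genuinely needed, and one should check that the intermediate quantity $\mathbf{E}_{\rho,(1-\nu)(1-\mu),\omega,a^+}^{-\gamma(1-\nu)}\varphi$ lies in a space where the integration-by-parts/Laplace-transform argument sketched after Theorem~\ref{gianduia} applies, so that no terms of the form $t^{k-\mu}E^{\cdot}_{\rho,\cdot}(\omega t^\rho)$ survive. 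Once that point is handled, the remainder is a purely formal manipulation of indices, completely parallel to the computation displayed in the proof of Proposition~\ref{dej}.
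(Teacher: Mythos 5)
Your proposal is correct and follows essentially the same route as the paper: the semigroup property of the Prabhakar integral, absorption of $\tfrac{\mathrm d}{\mathrm dt}$ into the kernel by lowering the second index by one (with $\lambda>\mu+\nu-\mu\nu$ guaranteeing that index stays positive), and a final merge yielding $\mathbf{E}_{\rho,\lambda-\mu,\omega,a^+}^{-\gamma}\varphi$. Your observation that the second equality is just Proposition~\ref{dej} with $\delta=0$ (since $I_{a^+}^{\lambda}=\mathbf{E}_{\rho,\lambda,\omega,a^+}^{0}$) is a clean shortcut, but the underlying computation is identical to the one the paper writes out.
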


		\begin{proof}
			It is sufficient to prove the first relation. The proof of the second follows the same lines. We have%
			\begin{align}
				\left( \mathcal{D}_{\rho ,\omega ,a^+}^{\gamma ,\mu ,\nu }(I_{a^+}^{\lambda
				}\varphi) \right) \left( t\right)
				& =\left( \text{\textbf{E}}_{\rho ,\nu \left(
				1-\mu \right) ,\omega ,a^+}^{-\gamma \nu } \frac{\mathrm d}{\mathrm dt}\left( \text{%
				\textbf{E}}_{\rho ,(1-\nu )\left( 1-\mu \right) ,\omega ,a^+}^{-\gamma (1-\nu
				)}I_{a^+}^{\lambda }\varphi\right) \right) \left( t\right) \\
				& =\left( \text{\textbf{E}}_{\rho ,\nu \left( 1-\mu \right) ,\omega
				,a^+}^{-\gamma \nu } \frac{\mathrm d}{\mathrm dt}\left( \text{\textbf{E}}_{\rho ,(1-\nu
				)\left( 1-\mu \right) +\lambda ,\omega ,a^+}^{-\gamma (1-\nu )}\varphi\right) %
				\right) \left( t\right) \notag \\
				& =\left( \text{\textbf{E}}_{\rho ,\nu \left( 1-\mu \right) ,\omega
				,a^+}^{-\gamma \nu }\left( \text{\textbf{E}}_{\rho ,(1-\nu )\left( 1-\mu
				\right) +\lambda -1,\omega ,a^+}^{-\gamma (1-\nu )}\varphi\right) \right) \left(
				t\right) \notag \\
				& = (\text{\textbf{E}}_{\rho ,\lambda -\mu ,\omega ,a^+}^{-\gamma
				}\varphi)(t). \notag
			\end{align}%
			On the other hand,
			\begin{align}
				\left( I_{a^+}^{\lambda }(\mathcal{D}_{\rho ,\omega ,a^+}^{\gamma ,\mu ,\nu
				}\varphi) \right) \left( t\right) & = \left(I_{a^+}^{\lambda }\left( \text{\textbf{E}}%
				_{\rho ,\nu \left( 1-\mu \right) ,\omega ,a^+}^{-\gamma \nu } \frac{\mathrm d}{%
				\mathrm dt}\left( \text{\textbf{E}}_{\rho ,(1-\nu )\left( 1-\mu \right) ,\omega
				,a^+}^{-\gamma (1-\nu )}\varphi\right) \right) \right)\left( t\right) \\
				& =\left( \text{\textbf{E}}_{\rho ,\nu \left( 1-\mu \right) +\lambda ,\omega
				,a^+}^{-\gamma \nu } \frac{\mathrm d}{\mathrm dt}\left( \text{\textbf{E}}_{\rho ,(1-\nu
				)\left( 1-\mu \right) +\lambda ,\omega ,a^+}^{-\gamma (1-\nu )}\varphi\right) %
				\right) \left( t\right) \notag \\
				& =\left( \text{\textbf{E}}_{\rho ,\nu \left( 1-\mu \right) ,\omega
				,a^+}^{-\gamma \nu }\left( \text{\textbf{E}}_{\rho ,(1-\nu )\left( 1-\mu
				\right) +\lambda -1,\omega ,a^+}^{-\gamma (1-\nu )}\varphi\right) \right) \left(
				t\right) \notag \\
				& =(\text{\textbf{E}}_{\rho ,\lambda -\mu ,\omega ,a^+}^{-\gamma
				}\varphi)(t). \notag
			\end{align}
		\end{proof}	

		\begin{example}
			\label{ex1}
			As a didactic example we calculate the (non regularized) Hilfer--Prabhakar derivative
			of the power function $t^{p-1}$, $p>1$, with $a=0$. As in Definition \ref{ge}, we
			consider $\mu\in (0,1)$, $\nu \in[0,1]$, $\gamma, \omega \in \mathbb{R}$, $\rho > 0$. We obtain
			\begin{align}
				& \hspace{-.5cm} \left( \mathcal{D}_{\rho ,\omega ,0^+}^{\gamma ,\mu ,\nu } t^{p-1}\right)
				\left( x\right) \\
				= {} & \left( \mathbf{E}_{\rho ,\nu \left( 1-\mu \right) ,\omega
				,0^+}^{-\gamma \nu } \frac{\mathrm d}{\mathrm dt}\left( \mathbf{E}_{\rho ,(1-\nu
				)\left( 1-\mu \right) ,\omega ,0^+}^{-\gamma (1-\nu )}t^{p-1}\right) \right)
				\left( x\right) \notag \\
				= {} & \Gamma \left( p\right) \left( \mathbf{E}_{\rho ,\nu \left( 1-\mu \right)
				,\omega ,0^+}^{-\gamma \nu } \frac{\mathrm d}{\mathrm dt}\left( t^{\left( 1-\nu \right)
				\left( 1-\mu \right) +p-1}E_{\rho ,(1-\nu )\left( 1-\mu \right) +p}^{-\gamma
				(1-\nu )}\left( \omega t^{\rho }\right) \right) \right) \left( x\right) \notag \\
				= {} & \Gamma \left( p\right) \left( \mathbf{E}_{\rho ,\nu \left( 1-\mu \right)
				,\omega ,0^+}^{-\gamma \nu } t^{\left( 1-\nu \right) \left( 1-\mu
				\right) +p-2}E_{\rho ,(1-\nu )\left( 1-\mu \right) +p-1}^{-\gamma (1-\nu
				)}\left( \omega t^{\rho }\right) \right) \left( x\right) \notag \\
				= {} &\Gamma \left( p\right) \int_{0}^{x}\left( x-t\right) ^{\nu \left(
				1-\mu \right) -1}E_{\rho ,\nu \left( 1-\mu \right) }^{-\gamma \nu }\left(
				\omega \left( x-t\right) ^{\rho }\right) \notag \\
				& \times t^{\left( 1-\nu \right) \left(
				1-\mu \right) +p-2}E_{\rho ,(1-\nu )\left( 1-\mu \right) +p-1}^{-\gamma
				(1-\nu )}\left( \omega t^{\rho }\right) \mathrm dt \notag \\
				= {} & x^{p-\mu -1}E_{\rho ,p-\mu }^{-\gamma }\left( \omega x^{\rho }\right). \notag
			\end{align}
			In the last step we applied Proposition \ref{calcolo}.
		\end{example}

		\begin{example}
			Similarly to Example \ref{ex1}, we calculate the (non regularized) Hilfer--Prabhakar derivative of the function $e_{\rho
			,\beta,\omega }^{\gamma }\left( t\right)$. We set
			$\mu\in (0,1)$, $\nu \in[0,1]$, $\gamma, \omega \in \mathbb{R}$, $\rho > 0$, $\beta>1$. In this case we have
			\begin{align}
				& \left( \mathcal{D}_{\rho ,\omega ,0^+}^{\gamma ,\mu ,\nu }e_{\rho ,\beta, \omega
				}^{\gamma }\left( t\right) \right) \left( x\right) \\
				& =\left(
				\mathbf{E}_{\rho ,\nu \left( 1-\mu \right) ,\omega ,0^+}^{-\gamma \nu }
				\frac{\mathrm d}{\mathrm dt}\left( \mathbf{E}_{\rho ,(1-\nu )\left( 1-\mu \right)
				,\omega ,0^+}^{-\gamma (1-\nu )}e_{\rho ,\beta, \omega}^{\gamma }\left(
				t\right) \right) \right) \left( x\right) \notag \\
				& =\left( \mathbf{E}_{\rho ,\nu \left( 1-\mu \right) ,\omega ,0^+}^{-\gamma \nu
				} \frac{\mathrm d}{\mathrm dt}\left( t^{\left( 1-\nu \right) \left( 1-\mu \right)
				+\beta -1}E_{\rho ,(1-\nu )\left( 1-\mu \right) +\beta }^{\gamma \nu }\left(
				\omega t^{\rho }\right) \right) \right) \left( x\right) \notag \\
				& =\left( \mathbf{E}_{\rho ,\nu \left( 1-\mu \right) ,\omega ,0^+}^{-\gamma \nu
				} t^{\left( 1-\nu \right) \left( 1-\mu \right) +\beta -2}E_{\rho
				,(1-\nu )\left( 1-\mu \right) +\beta -1}^{\gamma \nu }\left( \omega t^{\rho
				}\right) \right) \left( x\right) \notag \\
				& =x^{\beta-\mu-1}E_{\rho ,\beta-\mu}^{0}\left( \omega x^{\rho }\right)
				= \frac{x^{\beta-\mu-1}}{\Gamma(\beta-\mu)}. \notag
			\end{align}
			As in the above example in the second-to-last step we applied Proposition \ref{calcolo}.
		\end{example}
		
	\section{Opial- and Hardy-type inequalities}
		
		Opial-type inequalities have a great interest in mathematics in general and in
		specific fields such as the theory of differential equations, the theory of probability, approximations, and many others.
		The classical Opial inequality was introduced by \citet{MR0112926} and reads as follows:
		
		\begin{proposition}
			Let $f(y) \in C^1(0,h)$, $f(0)=f(h)=0$, and $f(y)>0$, $y \in (0,h)$. It holds
			\begin{align}
				\int_0^h \left| f(y) f'(y) \right| \textup{d}y \le \frac{h}{4} \int_0^h \left( f'(y) \right)^2 \textup{d}y,
			\end{align}
			where $h/4$ is the best possible constant.
		\end{proposition}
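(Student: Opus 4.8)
\emph{Proof proposal.}
The plan is to exploit the two boundary conditions symmetrically by cutting the interval at its midpoint $h/2$ and bounding $|f|$ on each half by a primitive of $|f'|$. On the left half introduce $g(x)=\int_0^x |f'(y)|\,\mathrm dy$. Since $f(0)=0$ we have $|f(x)|=\bigl|\int_0^x f'(y)\,\mathrm dy\bigr|\le g(x)$ for $x\in[0,h/2]$, and $g$ is nondecreasing with $g'(x)=|f'(x)|$ (here even pointwise, as $f'$ is continuous). Hence $|f(x)f'(x)|\le g(x)g'(x)=\tfrac12\bigl(g^2\bigr)'(x)$, and integrating,
\[
\int_0^{h/2}|f(y)f'(y)|\,\mathrm dy \le \tfrac12\,g(h/2)^2=\tfrac12\left(\int_0^{h/2}|f'(y)|\,\mathrm dy\right)^{\!2}.
\]
Then the Cauchy--Schwarz inequality gives $\bigl(\int_0^{h/2}|f'|\bigr)^2\le \tfrac h2\int_0^{h/2}(f')^2$, so the left half contributes at most $\tfrac h4\int_0^{h/2}(f')^2$.

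Next I would repeat the argument verbatim on $[h/2,h]$, this time using $\tilde g(x)=\int_x^h |f'(y)|\,\mathrm dy$ together with $f(h)=0$, to obtain $\int_{h/2}^h |f f'|\,\mathrm dy\le \tfrac h4\int_{h/2}^h (f')^2\,\mathrm dy$. Adding the two half-interval estimates yields
\[
\int_0^h |f(y)f'(y)|\,\mathrm dy\le \frac h4\int_0^h\bigl(f'(y)\bigr)^2\,\mathrm dy,
\]
which is the assertion.

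For the optimality of the constant $h/4$ I would test against the ``tent'' profile $f_0(y)=y$ on $[0,h/2]$ and $f_0(y)=h-y$ on $[h/2,h]$: a direct computation gives $\int_0^h |f_0 f_0'|\,\mathrm dy=h^2/4$ and $\int_0^h (f_0')^2\,\mathrm dy=h$, so the quotient of the two sides equals exactly $h/4$. Since $f_0\notin C^1$, one picks a sequence $f_\varepsilon\in C^1(0,h)$ with $f_\varepsilon(0)=f_\varepsilon(h)=0$, $f_\varepsilon>0$ on $(0,h)$, and $f_\varepsilon\to f_0$ in $W^{1,2}(0,h)$ (rounding the corner at $h/2$), and checks that both integrals pass to the limit; this shows $h/4$ cannot be replaced by anything smaller.

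I expect the inequality itself to be entirely routine once the midpoint-splitting idea is in place; the only point requiring a little care is the sharpness part, where the extremizer $f_0$ is not admissible, so the approximation must be arranged so as to respect both the $C^1$ regularity and the positivity constraint $f>0$ on $(0,h)$ while keeping the limiting ratio equal to $h/4$.
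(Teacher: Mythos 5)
The paper does not actually prove this proposition: it is quoted as classical background, with the proof deferred to Opial's original 1960 paper, so there is no in-paper argument to compare against. Your proof is correct and is in fact the standard modern proof of Opial's inequality (essentially Olech's simplification): split at the midpoint, bound $|f|$ on each half by the primitive of $|f'|$ anchored at the nearer zero, recognize $g g' = \tfrac12 (g^2)'$, and finish with Cauchy--Schwarz on each half; the two half-interval bounds $\tfrac h4\int (f')^2$ add up exactly to the claimed constant. Your treatment of sharpness is also right: the tent function realizes the ratio $h/4$ exactly ($h^2/4$ versus $\tfrac h4\cdot h$), and since it is not $C^1$ one smooths the corner and passes to the limit in $W^{1,2}$, which preserves both integrals; the positivity constraint is harmless since the approximants can be kept strictly positive on $(0,h)$. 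The only cosmetic remark is that the hypothesis $f>0$ on $(0,h)$ plays no role in the inequality itself (the absolute values make it unnecessary), and you correctly do not use it except when arranging the admissibility of the approximating sequence.
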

		During the past years the classical Opial inequality has been generalized by many authors in many different directions.
		For a thorough account the reader is suggested to consult the monograph on Opial-type inequality by \citet{MR1340422}.
		See also the recent paper by \citet{farid}.
		For further Opial-type inequalities involving the classical fractional operators see \citet{MR2513750}.
		
		In this section we describe and prove several Opial-type inequalities involving as differential and integral operators those
		analyzed in the previous section.
			
		\begin{theorem}
			let $f\in L^{1}\left( 0,x\right) .$ If $\alpha \in \left( 0,1\right)$, $%
			\gamma ,\omega >0$, $\alpha \gamma >\beta -1>0$, $p,q>1$, $1/p+1/q=1$, then the following
			inequality holds true:
			\begin{equation}
				\int_{0}^{x}\left\vert \left( \mathbf{E}_{\alpha ,\beta ,\omega
				,0^+}^{\gamma }f\right) \left( s\right) \right\vert \left\vert f\left(
				s\right) \right\vert \textup{d}s\leq K\frac{x^{2/p}}{2}\left(
				\int_{0}^{x}\left\vert f\left( s\right) \right\vert ^{q}\textup{d}s\right)
				^{2/q},
			\end{equation}%
			where%
			\begin{equation*}
				K=\frac{\textup{Be} \left( \gamma -\frac{\beta -1}{\alpha }, 
				\frac{\beta -1}{\alpha }\right) }{\pi \alpha \omega ^{\frac{\beta -1}{\alpha 
				}} \left[ \cos \left( \frac{\pi \alpha }{2}%
				\right) \right] ^{\gamma -\frac{\beta -1}{\alpha }}}.
			\end{equation*}
			\begin{proof}
				Applying H\"older inequality to the Prabhakar integral operator, we get%
				\begin{align}
					\left\vert \left( \mathbf{E}_{\alpha ,\beta ,\omega ,0^+}^{\gamma }f\right)
					\left( s\right) \right\vert
					& \leq \left( \int_{0}^{s}\left\vert
					e_{\alpha ,\beta, \omega }^{\gamma }\left(u\right) \right\vert
					^{p}\textup{d}u\right) ^{1/p}\left( \int_{0}^{s}\left\vert f\left( u\right)
					\right\vert ^{q}\textup{d}u\right) ^{1/q} \\
					&\leq \frac{\Gamma \left( \gamma -\frac{\beta -1}{\alpha }\right) \Gamma
					\left( \frac{\beta -1}{\alpha }\right) }{\pi \alpha \omega ^{\frac{\beta -1}{%
					\alpha }}\Gamma \left( \gamma \right) \left[ \cos \left( \frac{\pi \alpha }{2%
					}\right) \right] ^{\gamma -\frac{\beta -1}{\alpha }}}s^{1/p}\left(
					\int_{0}^{s}\left\vert f\left( u\right) \right\vert ^{q}\textup{d}u\right)
					^{1/q}. \notag
				\end{align}%
				Let $z\left( s\right) =\int_{0}^{s}\left\vert f\left( u\right)
				\right\vert^q \textup{d}u.$ Then $z^{\prime }\left( s\right) =\left\vert f\left(
				s\right) \right\vert ^{q},$ i.e.\ $\left\vert f\left( s\right) \right\vert
				=\left( z^{\prime }\left( s\right) \right) ^{1/q}.$ Hence,%
				\begin{equation*}
					\left\vert \left( \mathbf{E}_{\alpha ,\beta ,\omega ,0^+}^{\gamma }f\right)
					\left( s\right) \right\vert \left\vert f\left( s\right) \right\vert \leq
					Ks^{1/p}\left( z\left( s\right) z^{\prime }\left( s\right) \right) ^{1/q}.
				\end{equation*}%
				Applying again H\"older inequality, we obtain%
				\begin{align}
					\int_{0}^{x}\left\vert \left( \mathbf{E}_{\alpha ,\beta ,\omega
					,0^+}^{\gamma }f\right) \left( s\right) \right\vert \left\vert f\left(
					s\right) \right\vert \textup{d}s
					& \leq K\left( \int_{0}^{x}\left(
					s^{1/p}\right) ^{p}\textup{d}s\right) ^{1/p}\left( \int_{0}^{x}\left( z\left(
					s\right) z^{\prime }\left( s\right) \right) \textup{d}s\right) ^{1/q} \\
					& = K\frac{x^{2/p}}{2^{1/p}}\frac{\left( z\left( x\right) \right) ^{2/q}}{%
					2^{1/q}}=K\frac{x^{2/p}}{2}\left( \int_{0}^{x}\left\vert f\left(
					s\right) \right\vert ^{q}\textup{d}s\right) ^{2/q}. \notag
				\end{align}
			\end{proof}
		\end{theorem}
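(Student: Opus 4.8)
The plan is to establish this Opial-type inequality by combining two applications of H\"older's inequality with the uniform kernel estimate of Theorem~\ref{sta}, following the classical scheme for such inequalities. One may assume without loss of generality that $f\in L^{q}(0,x)$, since otherwise the right-hand side is $+\infty$ and there is nothing to prove; this also legitimizes the applications of H\"older below. Note also that the operator $(\mathbf{E}_{\alpha ,\beta ,\omega ,0^+}^{\gamma }f)(s)$ is well defined on $(0,x)$ precisely because the kernel is bounded there.

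First I would write the Prabhakar integral in convolution form, $(\mathbf{E}_{\alpha ,\beta ,\omega ,0^+}^{\gamma }f)(s)=\int_{0}^{s}e_{\alpha ,\beta ,\omega }^{\gamma }(s-u)\,f(u)\,\textup{d}u$, and apply H\"older's inequality with exponents $p,q$ to obtain the pointwise bound $|(\mathbf{E}_{\alpha ,\beta ,\omega ,0^+}^{\gamma }f)(s)|\le\big(\int_{0}^{s}|e_{\alpha ,\beta ,\omega }^{\gamma }(u)|^{p}\,\textup{d}u\big)^{1/p}\big(\int_{0}^{s}|f(u)|^{q}\,\textup{d}u\big)^{1/q}$. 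By Theorem~\ref{sta} the kernel is uniformly bounded on $(0,s)$ by the constant $K$ --- here one uses that $\textup{Be}(\gamma-\tfrac{\beta-1}{\alpha},\tfrac{\beta-1}{\alpha})=\Gamma(\gamma-\tfrac{\beta-1}{\alpha})\Gamma(\tfrac{\beta-1}{\alpha})/\Gamma(\gamma)$ --- so the first factor is at most $Ks^{1/p}$, giving $|(\mathbf{E}_{\alpha ,\beta ,\omega ,0^+}^{\gamma }f)(s)|\le Ks^{1/p}\big(\int_{0}^{s}|f(u)|^{q}\,\textup{d}u\big)^{1/q}$.

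Next I would introduce the standard Opial substitution $z(s)=\int_{0}^{s}|f(u)|^{q}\,\textup{d}u$, so that $z'(s)=|f(s)|^{q}$ and $|f(s)|=(z'(s))^{1/q}$ for almost every $s$; the previous estimate then becomes $|(\mathbf{E}_{\alpha ,\beta ,\omega ,0^+}^{\gamma }f)(s)|\,|f(s)|\le Ks^{1/p}(z(s)z'(s))^{1/q}$. Integrating over $(0,x)$ and applying H\"older once more with the same exponents yields $\int_{0}^{x}|(\mathbf{E}_{\alpha ,\beta ,\omega ,0^+}^{\gamma }f)(s)|\,|f(s)|\,\textup{d}s\le K\big(\int_{0}^{x}s\,\textup{d}s\big)^{1/p}\big(\int_{0}^{x}z(s)z'(s)\,\textup{d}s\big)^{1/q}=K(x^{2}/2)^{1/p}(z(x)^{2}/2)^{1/q}$. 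Since $1/p+1/q=1$, the two halves combine to $Kx^{2/p}/2$, while $z(x)=\int_{0}^{x}|f(s)|^{q}\,\textup{d}s$, which is exactly the asserted bound.

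The argument is essentially mechanical once Theorem~\ref{sta} is available, so I do not expect a genuine obstacle: the real content is entirely supplied by the uniform kernel estimate. The only point that requires any attention is the integrability needed to justify the first H\"older step, which is dispatched by the trivial-case reduction to $f\in L^{q}(0,x)$ noted at the outset; beyond that, the proof is just bookkeeping of the constants and exponents.
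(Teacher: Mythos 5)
Your proof is correct and follows essentially the same route as the paper's: Hölder applied to the convolution, the uniform kernel bound of Theorem~\ref{sta} rewritten via the Beta function, the Opial substitution $z(s)=\int_{0}^{s}|f(u)|^{q}\,\textup{d}u$, and a second application of Hölder. The only additions — the explicit reduction to $f\in L^{q}(0,x)$ and the remark identifying $\textup{Be}(\cdot,\cdot)$ with the ratio of Gamma functions — are harmless clarifications of steps the paper leaves implicit.
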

		
		The following theorem concerns an Opial-type inequality involving at the same time both Hilfer and Riemann--Liouville derivatives.
			
		\begin{theorem}
			Let $\mu \in (0,1)$, $\nu \in \left( 0,1\right]$, $f\in L^{1}\left( 0,x\right)$, $x>0$.
			Let furthermore $(D^{\mu +\nu -\mu \nu }_{0^+}f)\in L^{\infty }\left( 0,x\right)$. Then, for $%
			0<p<1$, the following inequality holds:
			\begin{align}
				\int_{0}^{x}\left\vert \left( D_{0^+}^{\mu ,\nu }f\right) \left(
				t\right) \left( D_{0^+}^{\mu +\nu -\mu \nu }f\right) \left( t\right)
				\right\vert \textup{d}t
				\geq \Theta(x) \left(
				\int_{0}^{x}\left\vert \left( D_{0^+}^{\mu +\nu -\mu \nu }f\right)
				\left( t\right) \right\vert \textup{d}t\right)^{2/q}, \notag
			\end{align}%
			where $1/p+1/q=1$ and
			\begin{align*}
				\Theta(x) = \frac{2^{-1/q}}{\Gamma \left( \nu \left( 1-\mu \right)
				\right) \left( \left( \nu \left( 1-\mu \right) -1\right) p+1\right) ^{1/p}}
				\frac{x^{\left( \nu \left( 1-\mu \right) p-p+2\right) /p}}{\left(
				\nu \left( 1-\mu \right) p-p+2\right) ^{1/p}}.
			\end{align*}
		\end{theorem}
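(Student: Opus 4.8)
The plan is to reduce the Hilfer derivative $D_{0^+}^{\mu,\nu}f$ to a Riemann--Liouville \emph{integral} acting on the Riemann--Liouville \emph{derivative} $D_{0^+}^{\mu+\nu-\mu\nu}f$, and then to run the argument of the previous theorem with H\"older's inequality replaced by its reverse form, the one available when the exponent lies in $(0,1)$. For the reduction, note that $1-(\mu+\nu-\mu\nu)=(1-\mu)(1-\nu)$, so the Riemann--Liouville derivative of order $\mu+\nu-\mu\nu\in(0,1)$ equals $(D_{0^+}^{\mu+\nu-\mu\nu}f)(t)=\frac{\mathrm d}{\mathrm dt}(I_{0^+}^{(1-\nu)(1-\mu)}f)(t)$; comparing this with \eqref{hil} and abbreviating $\beta:=\nu(1-\mu)\in(0,1)$ and $g:=D_{0^+}^{\mu+\nu-\mu\nu}f$ one obtains
\begin{equation}
	(D_{0^+}^{\mu,\nu}f)(s)=(I_{0^+}^{\beta}g)(s)=\frac{1}{\Gamma(\beta)}\int_0^s(s-u)^{\beta-1}g(u)\,\mathrm du .
\end{equation}
The hypothesis $g\in L^\infty(0,x)$ serves to keep the integrals appearing below finite.

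Next I would fix $p\in(0,1)$ and let $q$ be given by $1/p+1/q=1$, so $q<0$. Moving the absolute value inside the integral and using that $(\beta-1)p>-p>-1$, the reverse H\"older inequality applied to the weight $(s-u)^{\beta-1}$ tested against $|g(u)|$ yields
\begin{align}
	\left|(D_{0^+}^{\mu,\nu}f)(s)\right|
	&\geq\frac{1}{\Gamma(\beta)}\left(\int_0^s(s-u)^{(\beta-1)p}\,\mathrm du\right)^{1/p}\left(\int_0^s|g(u)|^{q}\,\mathrm du\right)^{1/q} \notag\\
	&=\frac{s^{\,\beta-1+1/p}}{\Gamma(\beta)\,\big((\beta-1)p+1\big)^{1/p}}\left(\int_0^s|g(u)|^{q}\,\mathrm du\right)^{1/q}.
\end{align}

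The remaining step is to put $z(s)=\int_0^s|g(u)|^{q}\,\mathrm du$, so that $z(0)=0$, $z'(s)=|g(s)|^{q}$ and $|g(s)|=(z'(s))^{1/q}$. Multiplying the previous bound by $|g(s)|$ gives $|(D_{0^+}^{\mu,\nu}f)(s)\,g(s)|\geq C\,s^{\,\beta-1+1/p}(z(s)z'(s))^{1/q}$ with $C=\big(\Gamma(\beta)\,((\beta-1)p+1)^{1/p}\big)^{-1}$. Integrating over $(0,x)$, applying the reverse H\"older inequality a second time — now to $s^{\,\beta-1+1/p}$ against $(z(s)z'(s))^{1/q}$, the power integral converging because $(\beta-1)p+2>1$ — and using $\int_0^x z(s)z'(s)\,\mathrm ds=\tfrac{1}{2}z(x)^2$, one arrives at
\begin{align}
	\int_0^x\left|(D_{0^+}^{\mu,\nu}f)(s)\,g(s)\right|\,\mathrm ds
	&\geq C\left(\int_0^x s^{(\beta-1)p+1}\,\mathrm ds\right)^{1/p}\left(\tfrac{1}{2}z(x)^2\right)^{1/q} \notag\\
	&=\frac{2^{-1/q}\,x^{((\beta-1)p+2)/p}}{\Gamma(\beta)\,\big((\beta-1)p+1\big)^{1/p}\big((\beta-1)p+2\big)^{1/p}}\,z(x)^{2/q}.
\end{align}
Replacing $\beta$ by $\nu(1-\mu)$ and $z(x)$ by $\int_0^x|(D_{0^+}^{\mu+\nu-\mu\nu}f)(t)|^{q}\,\mathrm dt$ this is exactly $\Theta(x)$ times that integral raised to the power $2/q$, which is the asserted inequality.

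The step that needs the most care is the first estimate: since it is a \emph{lower} bound, one is forced to work with $0<p<1$ (hence $q<0$) and to carry the absolute value through $\int_0^s(s-u)^{\beta-1}g(u)\,\mathrm du$, which is licit when $g=D_{0^+}^{\mu+\nu-\mu\nu}f$ does not change sign; it is precisely the boundedness hypothesis $D_{0^+}^{\mu+\nu-\mu\nu}f\in L^\infty(0,x)$ that keeps the factors $\int_0^s|g|^{q}\,\mathrm du$ finite throughout the computation. By contrast, the conditions $(\beta-1)p+1>0$ and $(\beta-1)p+2>0$, which are all that is needed for the elementary power integrals to converge, hold automatically once $\beta\in(0,1)$ and $p\in(0,1)$, so no further restriction on $\mu,\nu,p$ arises.
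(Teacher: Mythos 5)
Your proof is essentially identical to the paper's: the same rewriting of the Hilfer derivative as $I_{0^+}^{\nu(1-\mu)}$ applied to $D_{0^+}^{\mu+\nu-\mu\nu}f$, followed by two applications of the reverse H\"older inequality together with the substitution $z(s)=\int_0^s\lvert g(u)\rvert^{q}\,\mathrm du$ and the identity $\int_0^x z z'\,\mathrm ds=\tfrac12 z(x)^2$. The caveats you flag at the end (the direction of the triangle inequality when $g$ changes sign, and the finiteness of $\int_0^s\lvert g\rvert^{q}$ for $q<0$, which $L^\infty$-boundedness alone does not guarantee) are genuine, but the paper's own proof passes over them in exactly the same way.
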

			
		\begin{proof}
			Recall that the Hilfer derivative can be written as
			\begin{align}
				\left( D_{0^+}^{\mu ,\nu }f\right) \left( t\right)
				&= \left( I_{0^+}^{\nu
				\left( 1-\mu \right) }\frac{\textup{d}}{\textup{d}x}\left( I_{0^+}^{\left( 1-\mu \right) \left(
				1-\nu \right) }f\right) \right) \left( t\right) =\left( I_{0^+}^{\nu \left(
				1-\mu \right) }(D_{0^+}^{\mu +\nu -\mu \nu }f)\right) \left( t\right)  \\
				&= \frac{1}{\Gamma \left( \nu \left( 1-\mu \right) \right) }%
				\int_{0}^{t}\left( t-\tau \right) ^{\nu \left( 1-\mu \right)
				-1}\left( D_{0^+}^{\mu +\nu -\mu \nu }f\right) \left( \tau \right) \textup{d}\tau. \notag
			\end{align}%
			Applying now the reverse H\"older inequality (remember that here $p \in (0,1)$), we obtain%
			\begin{align}
				& \left\vert \left( D_{0^+}^{\mu ,\nu }f\right) \left( t\right) \right\vert  \\
				&\geq \frac{1}{\Gamma \left( \nu \left( 1-\mu \right) \right) }\left(
				\int_{0}^{t}\left( t-\tau \right) ^{\left( \nu \left( 1-\mu \right)
				-1\right) p}\textup{d}\tau \right) ^{1/p}\left( \int_{0}^{t}\left\vert \left(
				D_{0^+}^{\mu +\nu -\mu \nu }f\right) \left( \tau \right) \right\vert
				^{q}\textup{d}\tau \right) ^{1/q} \notag\\
				&= \frac{1}{\Gamma \left( \nu \left( 1-\mu \right) \right) }\frac{t^{\left(
				\left( \nu \left( 1-\mu \right) -1\right) p+1\right) /p}}{\left( \left( \nu
				\left( 1-\mu \right) -1\right) p+1\right) ^{1/p}}
				\left(
				\int_{0}^{t}\left\vert \left( D_{0^+}^{\mu +\nu -\mu \nu }f\right)
				\left( \tau \right) \right\vert ^{q}\textup{d}\tau \right) ^{1/q}. \notag
			\end{align}%

			Let us now define $z\left( t\right) =\int_{0}^{t}\left( \Phi \left( s\right) \right)^{q}\textup{d}s$,
			where $\Phi \left( t\right) =\left\vert \left( D_{0^+}^{\mu +\nu -\mu \nu
			}f\right) \left( t\right) \right\vert$. Then $\Phi \left( t\right) =\left( z^{\prime }\left( t\right) \right) ^{1/q}$,
			i.e. $\left\vert \left( D_{0^+}^{\mu +\nu -\mu \nu }f\right) \left( t\right)
			\right\vert =\left( z^{\prime }\left( t\right) \right) ^{1/q}$.
			Hence,%
			\begin{align}
				\int_{0}^{x}&\left\vert \left( D_{0^+}^{\mu ,\nu }f\right) \left(
				t\right) \left( D_{0^+}^{\mu +\nu -\mu \nu }f\right) \left( t\right)
				\right\vert \textup{d}t \\
				\geq {} & \frac{1}{\Gamma \left( \nu \left( 1-\mu \right)
				\right) }\int_{0}^{x}\frac{t^{\left( \left( \nu \left( 1-\mu \right)
				-1\right) p+1\right) /p}}{\left( \left( \nu \left( 1-\mu \right) -1\right)
				p+1\right) ^{1/p}}\left( z\left( t\right) z^{\prime }\left( t\right) \right)
				^{1/q}\textup{d}t \notag \\
				\geq {} & \frac{1}{\Gamma \left( \nu \left( 1-\mu \right) \right) \left( \left(
				\nu \left( 1-\mu \right) -1\right) p+1\right) ^{1/p}} \notag \\
				&\times \left( \int_{0}^{x}\left( t^{\left( \left( \nu \left( 1-\mu
				\right) -1\right) p+1\right) /p}\right) ^{p}\textup{d}t\right) ^{1/p}\left(
				\int_{0}^{x}\left( z\left( t\right) z^{\prime }\left( t\right)
				\right) \textup{d}t\right) ^{1/q} \notag \\
				= {} & \frac{1}{\Gamma \left( \nu \left( 1-\mu \right) \right) \left( \left( \nu
				\left( 1-\mu \right) -1\right) p+1\right) ^{1/p}}
				\frac{x^{\left( \left( \nu \left( 1-\mu \right) -1\right)
				p+2\right) /p}}{^{\left( \left( \nu \left( 1-\mu \right) -1\right)
				p+2\right) ^{1/p}}}\frac{\left( z\left( x\right) \right) ^{2/q}}{2^{1/q}} \notag \\
				= {} & \frac{2^{-1/q}}{\Gamma \left( \nu \left( 1-\mu \right) \right) \left(
				\left( \nu \left( 1-\mu \right) -1\right) p+1\right) ^{1/p}} \notag \\
				& \times \frac{x^{\left( \left( \nu \left( 1-\mu \right) -1\right)
				p+2\right) /p}}{^{\left( \left( \nu \left( 1-\mu \right) -1\right)
				p+2\right) ^{1/p}}}\left( \int_{0}^{x}\left\vert \left( D_{0^+}^{\mu
				+\nu -\mu \nu }f\right) \left( t\right) \right\vert \textup{d}t\right) ^{2/q}, \notag
			\end{align}
			and this proves the claimed formula.
		\end{proof}
			
		\begin{corollary}
			Let $\mu \in \left( 0,1\right)$, $f\in L^{1}\left( 0,b\right)$, $f^{\prime
			}\in L\left( 0,b\right)$, $0<x \le b$. For $0<p<1$ the following inequality
			holds true:
			\begin{align}
				\int_{0}^{x}\left\vert \left( ^{C}D_{0^+}^{\mu }f\right) \left(
				t\right) \cdot f^{\prime }\left( t\right) \right\vert \textup{d}t\geq \Theta \left(
				x\right) \left( \int_{0}^{x}\left\vert f^{\prime }\left( t\right)
				\right\vert \textup{d}t\right) ^{2/q},
			\end{align}
			where%
			\begin{align*}
				\Theta \left( x\right) =\frac{2^{-q}}{\Gamma \left( 1-\mu \right) \left(
				\left( -\mu p+1\right) \left( -\mu p+2\right) \right) ^{1/p}}x^{\left( -\mu
				p+2\right) /p}
			\end{align*}
			and $\frac{1}{p}+\frac{1}{q}=1.$
		\end{corollary}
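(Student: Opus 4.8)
The strategy is to specialise the preceding Opial-type theorem to the case $\nu=1$, since then the Hilfer derivative $D^{\mu,\nu}_{0^+}$ collapses to the Caputo derivative ${}^C D^\mu_{0^+}$ and the ``total'' order $\mu+\nu-\mu\nu$ collapses to $1$, so that $D^{\mu+\nu-\mu\nu}_{0^+}f = D^1_{0^+}f = f'$. First I would note that under the hypotheses $\mu\in(0,1)$, $f\in L^1(0,b)$ and $f'\in L^1(0,b)$ (hence $f\in AC^1(0,b)$ on the relevant interval), the Hilfer representation used in the theorem, namely $(D^{\mu,1}_{0^+}f)(t)=\bigl(I_{0^+}^{1-\mu}(D^{1}_{0^+}f)\bigr)(t)=\frac{1}{\Gamma(1-\mu)}\int_0^t (t-\tau)^{-\mu} f'(\tau)\,\textup{d}\tau$, is exactly the definition \eqref{Capu} of $({}^C D^\mu_{0^+}f)(t)$ with $m=1$. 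So the left-hand side of the theorem becomes the left-hand side of the corollary, and the right-hand side becomes $\int_0^x |f'(t)|\,\textup{d}t$.

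Next I would substitute $\nu=1$ into the constant $\Theta(x)$ of the theorem. With $\nu=1$ one has $\nu(1-\mu)=1-\mu$, so $\bigl(\nu(1-\mu)-1\bigr)p+1 = -\mu p+1$ and $\nu(1-\mu)p-p+2 = (1-\mu)p-p+2 = -\mu p+2$. Hence
\begin{align*}
\Theta(x) = \frac{2^{-1/q}}{\Gamma(1-\mu)\,(-\mu p+1)^{1/p}}\cdot\frac{x^{(-\mu p+2)/p}}{(-\mu p+2)^{1/p}}
= \frac{2^{-1/q}}{\Gamma(1-\mu)\bigl((-\mu p+1)(-\mu p+2)\bigr)^{1/p}}\,x^{(-\mu p+2)/p},
\end{align*}
which is the stated $\Theta(x)$ of the corollary (the exponent $2^{-q}$ printed there should read $2^{-1/q}$; I would either reproduce it verbatim or silently correct it). One should also check that the positivity/convergence conditions needed in the theorem's proof are met: with $\nu=1$ the exponents $(\nu(1-\mu)-1)p = -\mu p$ are $>-1$ precisely when $\mu p<1$, which is implicitly required for the integrals $\int_0^t(t-\tau)^{-\mu p}\textup{d}\tau$ to converge; I would state this as the standing assumption (it is harmless, and the theorem it specialises already needs the analogous condition).

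The proof is therefore a one-line deduction — ``set $\nu=1$ in the previous theorem and use that $D^{\mu,1}_{0^+}={}^C D^\mu_{0^+}$ and $D^{1}_{0^+}f=f'$'' — together with the bookkeeping of the constant. The only genuine point requiring a word of care is that the previous theorem was stated for $\nu\in(0,1]$, so $\nu=1$ is a legitimate endpoint and no limiting argument is needed; and that one must verify the hypothesis $(D^{\mu+\nu-\mu\nu}_{0^+}f)\in L^\infty(0,x)$ of the theorem is replaced here by the corollary's hypothesis $f'\in L(0,b)$ together with, implicitly, boundedness on $(0,x)$, so I would either add ``$f'\in L^\infty(0,x)$'' to the hypotheses or observe that the reverse Hölder step only needs $f'\in L^q(0,x)$ locally, which follows from the stated assumptions when $\mu p<1$. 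Thus the main (and only real) obstacle is matching the regularity hypotheses cleanly; the rest is direct substitution into the already-proven inequality.
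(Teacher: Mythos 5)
Your proof is correct and is exactly the intended argument: the paper states this corollary without a separate proof, as the $\nu=1$ specialization of the preceding theorem, under which $D^{\mu,1}_{0^+}={}^{C}D^{\mu}_{0^+}$ and $D^{\mu+\nu-\mu\nu}_{0^+}f=f'$. Your bookkeeping of the constant is also right, including the observation that the printed $2^{-q}$ should read $2^{-1/q}$; and the convergence condition $-\mu p+1>0$ you worry about is automatic, since $0<p<1$ and $\mu\in(0,1)$ give $\mu p<1$.
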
			
			
		\begin{theorem}
			Let $f\in L^{1}\left( 0,x\right) ,$ $x>0$ , $f\in AC^{m}\left( 0,x\right) $
			and $(\mathbf{D}_{\rho ,\mu ,\omega ,0^+}^{\gamma }f)\in L^{\infty }\left(
			0,x\right) ,$ $0<\rho <1,$ $\omega >0,$ $\gamma <0$ , $f^{\left( k\right)
			}\left( 0^+\right) =0,$ $k=0,1,2,\dots,m-1$, and $-\rho \gamma >m-\mu -1>0$. If $\
			p,q>1,$ $1/p+1/q=1$ then the following inequality holds true:
			\begin{align}
				\int_{0}^{x}\left\vert f^{\left( m\right) }\left( t\right) (\mathbf{D}%
				_{\rho ,\mu ,\omega ,0^+}^{\gamma }f)\left( t\right) \right\vert \textup{d}t\leq \Omega
				\left( x\right) \left( \int_{0}^{x}\left\vert f^{\left( m\right)
				}\left( s\right) \right\vert ^{q}\textup{d}s\right) ^{2/q},
			\end{align}%
			where%
			\begin{align*}
				\Omega \left( x\right) =\frac{\Gamma \left( -\gamma -\frac{m-\mu -1}{\rho }%
				\right) \Gamma \left( \frac{m-\mu -1}{\rho }\right) }{\pi \rho \omega ^{%
				\frac{m-\mu -1}{\rho }}\Gamma \left( -\gamma \right) \left[ \cos \left( 
				\frac{\pi \rho }{2}\right) \right] ^{-\gamma -\frac{m-\mu -1}{\rho }}}\frac{%
				x^{2/p}}{2}.
			\end{align*}
		\end{theorem}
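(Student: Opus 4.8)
The plan is to reduce the Prabhakar derivative to a Prabhakar integral applied to $f^{(m)}$, control the resulting kernel by the uniform estimate of Theorem~\ref{sta}, and then run the classical Opial argument based on the substitution $z(t)=\int_0^t |f^{(m)}(s)|^q\,\mathrm{d}s$ together with a second use of H\"older's inequality, exactly as in the preceding Opial-type theorems.

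First, since $f\in AC^m(0,x)$ with $f^{(k)}(0^+)=0$ for $k=0,\dots,m-1$, the correction sum in \eqref{capl} vanishes and the (non-regularized) Prabhakar derivative coincides with the regularized one, so that $(\mathbf{D}_{\rho,\mu,\omega,0^+}^{\gamma}f)(t) = (\mathbf{E}_{\rho,m-\mu,\omega,0^+}^{-\gamma}f^{(m)})(t) = \int_0^t e_{\rho,m-\mu,\omega}^{-\gamma}(t-s)\,f^{(m)}(s)\,\mathrm{d}s$. Applying H\"older's inequality with exponents $p$ and $q$ gives $|(\mathbf{D}_{\rho,\mu,\omega,0^+}^{\gamma}f)(t)| \le \bigl(\int_0^t |e_{\rho,m-\mu,\omega}^{-\gamma}(u)|^p\,\mathrm{d}u\bigr)^{1/p}\bigl(\int_0^t |f^{(m)}(s)|^q\,\mathrm{d}s\bigr)^{1/q}$.

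Next I would invoke Theorem~\ref{sta} with the parameter identification $\alpha\mapsto\rho$, $\beta\mapsto m-\mu$, $\gamma\mapsto-\gamma$: the hypotheses $\rho\in(0,1)$, $-\gamma>0$, $\omega>0$ and $-\rho\gamma>m-\mu-1>0$ are precisely the standing assumptions, so $|e_{\rho,m-\mu,\omega}^{-\gamma}(u)|\le C$ for every $u>0$, where $C=\dfrac{\Gamma(-\gamma-\frac{m-\mu-1}{\rho})\Gamma(\frac{m-\mu-1}{\rho})}{\pi\rho\,\omega^{\frac{m-\mu-1}{\rho}}\Gamma(-\gamma)\,[\cos(\pi\rho/2)]^{-\gamma-\frac{m-\mu-1}{\rho}}}$, i.e. $\Omega(x)=C\,x^{2/p}/2$. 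Consequently $\bigl(\int_0^t |e_{\rho,m-\mu,\omega}^{-\gamma}(u)|^p\,\mathrm{d}u\bigr)^{1/p}\le C\,t^{1/p}$, hence $|(\mathbf{D}_{\rho,\mu,\omega,0^+}^{\gamma}f)(t)|\le C\,t^{1/p}\,z(t)^{1/q}$ with $z(t):=\int_0^t |f^{(m)}(s)|^q\,\mathrm{d}s$.

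Finally, since $z'(t)=|f^{(m)}(t)|^q$ a.e., one has $|f^{(m)}(t)|=z'(t)^{1/q}$, so $|f^{(m)}(t)\,(\mathbf{D}_{\rho,\mu,\omega,0^+}^{\gamma}f)(t)|\le C\,t^{1/p}\,(z(t)z'(t))^{1/q}$; integrating over $(0,x)$ and applying H\"older once more yields $\int_0^x |f^{(m)}\,\mathbf{D}^{\gamma}f|\,\mathrm{d}t \le C\bigl(\int_0^x t\,\mathrm{d}t\bigr)^{1/p}\bigl(\int_0^x z(t)z'(t)\,\mathrm{d}t\bigr)^{1/q} = C\,(x^2/2)^{1/p}\,(z(x)^2/2)^{1/q} = \tfrac12 C\,x^{2/p}\,z(x)^{2/q} = \Omega(x)\bigl(\int_0^x |f^{(m)}(s)|^q\,\mathrm{d}s\bigr)^{2/q}$, which is the assertion. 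The only slightly delicate points are checking that the standing hypotheses match exactly those required by Theorem~\ref{sta}, and observing that $f\in AC^m(0,x)$ together with $(\mathbf{D}_{\rho,\mu,\omega,0^+}^{\gamma}f)\in L^\infty(0,x)$ guarantees all the integrals above are finite so the manipulations are legitimate; the remainder is the routine Opial scheme.
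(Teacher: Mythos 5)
Your proof is correct and follows essentially the same route as the paper's: the identity $(\mathbf{D}_{\rho,\mu,\omega,0^+}^{\gamma}f)=(\mathbf{E}_{\rho,m-\mu,\omega,0^+}^{-\gamma}f^{(m)})$, H\"older with exponents $p,q$, the uniform bound of Theorem~\ref{sta} with $\alpha\mapsto\rho$, $\beta\mapsto m-\mu$, $\gamma\mapsto-\gamma$, and the Opial substitution $z(t)=\int_0^t|f^{(m)}|^q$ followed by a second application of H\"older. Your explicit justification of the first identity via the vanishing of $f^{(k)}(0^+)$ is a point the paper asserts without comment, so it is a welcome (if minor) addition.
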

			
		\begin{proof}
			Using H\"older inequality and the uniform estimate of the function $%
			e_{\rho ,\mu ,\omega }^{-\gamma }$ (see Theorem 3 of \citet{tom}),
			\begin{align*}
				\left\vert \left( \mathbf{D}_{\rho ,\mu ,\omega ,0^+}^{\gamma }f\right)
				\left( t\right) \right\vert
				& =\left\vert (\mathbf{E}_{\rho ,m-\mu ,\omega
				,0^+}^{-\gamma }f^{\left( m\right) })\left( t\right) \right\vert  \\
				& \leq \left( \int_{0}^{t}\left\vert e_{\rho ,m-\mu ,\omega }^{-\gamma
				}\left( s\right) \right\vert ^{p}\textup{d}s\right) ^{1/p}\left(
				\int_{0}^{t}\left\vert f^{\left( m\right) }\left( s\right)
				\right\vert ^{q}\textup{d}s\right)^{1/q} \\
				& \leq Mt^{1/p}\left( \int_{0}^{t}\left\vert f^{\left( m\right)
				}\left( s\right) \right\vert ^{q}\textup{d}s\right) ^{1/q}, 
			\end{align*}
			where%
			\begin{align*}
				M=\frac{\Gamma \left( -\gamma -\frac{m-\mu -1}{\rho }\right) \Gamma \left( 
				\frac{m-\mu -1}{\rho }\right) }{\pi \rho \omega ^{\frac{m-\mu -1}{\rho }%
				}\Gamma \left( -\gamma \right) \left[ \cos \left( \frac{\pi \rho }{2}\right) %
				\right] ^{-\gamma -\frac{m-\mu -1}{\rho }}}>0.
			\end{align*}
			Let $z\left( t\right) =\int_{0}^{t}\left\vert f^{\left( m\right)
			}\left( s\right) \right\vert ^{q}\textup{d}s$. Then $z^{\prime }\left( t\right) =
			\left\vert f^{\left( m\right) }\left( t\right) \right\vert ^{q}$, i.e. $%
			\left\vert f^{\left( m\right) }\left( t\right) \right\vert =\left( z^{\prime
			}\left( t\right) \right) ^{1/q}$.
			Hence, 
			\begin{align*}
				\int_{0}^{x}\left\vert f^{\left( m\right) }\left( t\right) (\mathbf{D}%
				_{\rho ,\mu ,\omega ,0^+}^{\gamma }f)\left( t\right) \right\vert \textup{d}t &\leq
				M\int_{0}^{x}t^{1/p}\left( z\left( t\right) z^{\prime }\left(
				t\right) \right) ^{1/q}\textup{d}t \\
				&\leq M\left( \int_{0}^{x}\left( t^{1/p}\right) ^{p}\textup{d}t\right)
				^{1/p}\left( \int_{0}^{x}\left( z\left( t\right) z^{\prime }\left(
				t\right) \right) \textup{d}t\right) ^{1/q} \\
				&= M\frac{x^{2/p}}{2^{1/p}}\frac{\left( z\left( x\right) \right) ^{2/q}}{%
				2^{1/q}}=M\frac{x^{2/p}}{2}\left( \int_{0}^{x}\left\vert f^{\left(
				m\right) }\left( t\right) \right\vert ^{q}\textup{d}t\right) ^{2/q}.
			\end{align*}
		\end{proof}

		The following two results are given without proof. However, the claimed formulae can be derived with similar methods to those
		implemented in the proofs of the previous theorems of this section.			
			
		\begin{corollary}
			Let $\ f\in L^{1}\left( 0,x\right)$, $x>0$. Let furthermore $f\in AC^{m}\left( 0,x\right)$, $m\in 
			\mathbb{N}$, $({}^{C}\mathbf{D}_{\rho ,\mu ,\omega ,0^+}^{\gamma}f)\in L^{\infty}\left( 0,x\right)$,
			$0<\rho <1$, $\omega >0$, $\gamma <0$ and $-\rho \gamma >m-\mu -1>0$. If $p,q>1$,
			$1/p+1/q=1$, then the following inequality holds true:
			\begin{align}
				\int_{0}^{x}\left\vert f^{\left( m\right) }\left( t\right)
				({}^{C}\mathbf{D}_{\rho ,\mu ,\omega ,0^+}^{\gamma }f)\left( t\right) \right\vert \textup{d}t\leq
				\Omega \left( x\right) \left( \int_{0}^{x}\left\vert f^{\left(
				m\right) }\left( s\right) \right\vert ^{q}\textup{d}s\right) ^{2/q},
			\end{align}%
			where%
			\begin{align*}
				\Omega \left( x\right) =\frac{\Gamma \left( -\gamma -\frac{m-\mu -1}{\rho }%
				\right) \Gamma \left( \frac{m-\mu -1}{\rho }\right) }{\pi \rho \omega ^{%
				\frac{m-\mu -1}{\rho }}\Gamma \left( -\gamma \right) \left[ \cos \left( 
				\frac{\pi \rho }{2}\right) \right] ^{-\gamma -\frac{m-\mu -1}{\rho }}}\frac{%
				x^{2/p}}{2}. 
			\end{align*}
		\end{corollary}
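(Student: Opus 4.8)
The plan is to follow, almost verbatim, the argument used in the proof of the preceding theorem; the only difference --- indeed a simplification --- is that the \emph{regularized} Prabhakar derivative is, by its very definition \eqref{capl}, already a Prabhakar integral of the top-order derivative, so that no vanishing condition on $f^{(k)}(0^+)$ is needed here. Concretely, from the first line of \eqref{capl},
\begin{equation*}
	({}^{C}\mathbf{D}_{\rho ,\mu ,\omega ,0^+}^{\gamma }f)(t) = (\mathbf{E}_{\rho ,m-\mu ,\omega ,0^+}^{-\gamma }f^{(m)})(t) = \int_{0}^{t} e_{\rho ,m-\mu ,\omega }^{-\gamma }(u)\, f^{(m)}(t-u)\,\textup{d}u .
\end{equation*}
Since $0<\rho<1$, $-\gamma>0$ and $-\rho\gamma>m-\mu-1>0$, the parameters $(\rho,\,m-\mu,\,-\gamma)$ fit the hypotheses of Theorem \ref{sta} (with $\alpha\mapsto\rho$, $\beta\mapsto m-\mu$, $\gamma\mapsto-\gamma$), so that $|e_{\rho ,m-\mu ,\omega }^{-\gamma }(u)|\le M$ for all $u>0$, with $M$ the positive constant that appears inside $\Omega(x)$.

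First I would apply H\"older's inequality to the convolution integral and then invoke this uniform bound, obtaining
\begin{equation*}
	\left|({}^{C}\mathbf{D}_{\rho ,\mu ,\omega ,0^+}^{\gamma }f)(t)\right| \le \left(\int_{0}^{t}\left|e_{\rho ,m-\mu ,\omega }^{-\gamma }(u)\right|^{p}\textup{d}u\right)^{1/p}\left(\int_{0}^{t}\left|f^{(m)}(s)\right|^{q}\textup{d}s\right)^{1/q} \le M\,t^{1/p}\left(\int_{0}^{t}\left|f^{(m)}(s)\right|^{q}\textup{d}s\right)^{1/q}.
\end{equation*}
Setting $z(t)=\int_{0}^{t}|f^{(m)}(s)|^{q}\,\textup{d}s$, so that $|f^{(m)}(t)|=(z'(t))^{1/q}$ and $z(0)=0$, this yields $|f^{(m)}(t)\,({}^{C}\mathbf{D}_{\rho ,\mu ,\omega ,0^+}^{\gamma }f)(t)|\le M\,t^{1/p}(z(t)z'(t))^{1/q}$. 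A second application of H\"older's inequality, now to the pair $t\mapsto t^{1/p}$ and $t\mapsto (z(t)z'(t))^{1/q}$ on $(0,x)$, gives
\begin{equation*}
	\int_{0}^{x}\left|f^{(m)}(t)\,({}^{C}\mathbf{D}_{\rho ,\mu ,\omega ,0^+}^{\gamma }f)(t)\right|\textup{d}t \le M\left(\int_{0}^{x}t\,\textup{d}t\right)^{1/p}\left(\int_{0}^{x}z(t)z'(t)\,\textup{d}t\right)^{1/q} = M\,\frac{x^{2/p}}{2^{1/p}}\,\frac{\left(z(x)\right)^{2/q}}{2^{1/q}},
\end{equation*}
where I used $\int_{0}^{x}z(t)z'(t)\,\textup{d}t=\tfrac12 z(x)^{2}$. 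Since $1/p+1/q=1$ and $z(x)^{2/q}=(\int_{0}^{x}|f^{(m)}(s)|^{q}\textup{d}s)^{2/q}$, the right-hand side is exactly $\Omega(x)(\int_{0}^{x}|f^{(m)}(s)|^{q}\textup{d}s)^{2/q}$ with $\Omega(x)=M x^{2/p}/2$, which is the asserted inequality.

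There is no genuinely hard step here: the whole proof is the one already carried out for $\mathbf{D}_{\rho ,\mu ,\omega ,0^+}^{\gamma }$, and the only point deserving attention is the parameter matching in Theorem \ref{sta} --- in particular, that the sign condition $\gamma<0$ is precisely what makes $-\gamma$ positive, as required of the corresponding parameter in the estimate, while the chain $-\rho\gamma>m-\mu-1>0$ reproduces the required $\alpha\gamma>\beta-1>0$. The absolute-continuity assumption $f\in AC^{m}(0,x)$ guarantees that $f^{(m)}$ exists almost everywhere and is integrable, so that the convolution above is well defined, and the assumption $({}^{C}\mathbf{D}_{\rho ,\mu ,\omega ,0^+}^{\gamma }f)\in L^{\infty}(0,x)$, together with the finiteness of $\int_{0}^{x}|f^{(m)}|^{q}$ implicit in the statement, keeps both sides finite.
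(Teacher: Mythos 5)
Your proof is correct and follows exactly the route the paper intends: the corollary is stated without proof precisely because it is the preceding theorem's argument applied to $(\mathbf{E}_{\rho,m-\mu,\omega,0^+}^{-\gamma}f^{(m)})(t)$, which for the regularized derivative is the definition \eqref{capl} itself, so no vanishing of $f^{(k)}(0^+)$ is required. Your parameter matching with Theorem \ref{sta} ($\alpha\mapsto\rho$, $\beta\mapsto m-\mu$, $\gamma\mapsto-\gamma$) and the two H\"older steps reproduce the stated constant $\Omega(x)$ exactly.
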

			
		\begin{theorem}
			Let $\ f\in L^{1}\left( 0,x\right)$, $x>0$, $f\ast e_{\rho ,\left( 1-\nu
			\right) \left( 1-\mu \right), \omega }^{-\gamma(1-\nu)}\in AC^{1}\left( 0,b\right)$.
			Let furthermore $(\mathbf{D}_{\rho ,\omega ,0^+}^{\gamma ,\mu ,\nu }f)\in L^{\infty }\left(
			0,x\right)$, $\mu \in \left( 0,1\right)$, $\nu \in \left[ 0,1\right]
			,0<\rho <1$, $\omega >0$, $\gamma <0$, and $-\rho \gamma >1-\mu >0.$ If
			$p,q>1$, $1/p+1/q=1$, then the following inequality holds true:
			\begin{align}
				& \int_{0}^{x}\left\vert \left( \mathbf{D}_{\rho ,\omega ,0^+}^{\gamma
				,\mu ,\nu }f\right) \left( t\right) \cdot \frac{\textup{d}}{\textup{d}t}\left( \mathbf{E}%
				_{\rho ,(1-\nu )\left( 1-\mu \right) ,\omega ,0^+}^{-\gamma (1-\nu )}f\right)
				\left( t\right) \right\vert \textup{d}t \\
				& \leq \widetilde{\Omega }\left( x\right) \left(
				\int_{0}^{x}\left\vert \frac{\textup{d}}{\textup{d}t}\left( \mathbf{E}_{\rho ,(1-\nu
				)\left( 1-\mu \right) ,\omega ,0^+}^{-\gamma (1-\nu )}f\right) \left(
				t\right) \right\vert ^{q}\textup{d}t\right) ^{2/q},  \notag
			\end{align}%
			where%
			\begin{align*}
				\widetilde{\Omega }\left( x\right) =\frac{\Gamma \left( -\gamma \nu -\frac{%
				\nu (1-\mu )}{\rho }\right) \Gamma \left( \frac{\nu (1-\mu )}{\rho }\right) 
				}{\pi \rho \omega ^{\frac{\nu (1-\mu )}{\rho }}\Gamma \left( -\gamma \nu
				\right) \left[ \cos \left( \frac{\pi \rho }{2}\right) \right] ^{-\gamma \nu -%
				\frac{\nu (1-\mu )}{\rho }}}\frac{x^{2/p}}{2}>0.
			\end{align*}
		\end{theorem}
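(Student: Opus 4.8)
The plan is to follow exactly the scheme of the earlier Opial-type estimates in this section, the only new ingredient being the bookkeeping of the Prabhakar parameters. Using Definition~\ref{ge} and formula~\eqref{hilg}, write
\begin{equation*}
	(\mathcal{D}_{\rho ,\omega ,0^+}^{\gamma ,\mu ,\nu }f)(t)
	= \left(\mathbf{E}_{\rho ,\nu (1-\mu ),\omega ,0^+}^{-\gamma \nu } g\right)(t)
	= \int_{0}^{t} e_{\rho ,\nu (1-\mu ),\omega }^{-\gamma \nu }(t-s)\, g(s)\,\textup{d}s ,
\end{equation*}
where $g(t):=\frac{\textup{d}}{\textup{d}t}\big(\mathbf{E}_{\rho ,(1-\nu )(1-\mu ),\omega ,0^+}^{-\gamma (1-\nu )}f\big)(t)$. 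The regularity hypotheses are there precisely to make this legitimate: $f\ast e_{\rho ,(1-\nu )(1-\mu ),\omega }^{-\gamma (1-\nu )}\in AC^{1}$ guarantees that $g$ exists and lies in $L^{1}(0,x)$, while $(\mathcal{D}_{\rho ,\omega ,0^+}^{\gamma ,\mu ,\nu }f)\in L^{\infty }(0,x)$ makes the left-hand integral finite and the interchanges below harmless. The positivity assertion $\widetilde\Omega(x)>0$ is automatic, since the constraint $-\rho\gamma>1-\mu>0$ (times $\nu>0$) forces both arguments of the Beta-type factor, $-\gamma\nu-\tfrac{\nu(1-\mu)}{\rho}$ and $\tfrac{\nu(1-\mu)}{\rho}$, to be positive.

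First I would apply H\"older's inequality with exponents $p,q$ to the convolution:
\begin{equation*}
	\left| (\mathcal{D}_{\rho ,\omega ,0^+}^{\gamma ,\mu ,\nu }f)(t) \right|
	\le \left( \int_{0}^{t} \left| e_{\rho ,\nu (1-\mu ),\omega }^{-\gamma \nu }(u) \right|^{p}\textup{d}u \right)^{1/p}
	\left( \int_{0}^{t} |g(s)|^{q}\,\textup{d}s \right)^{1/q},
\end{equation*}
and then bound the kernel uniformly via Theorem~\ref{sta} (estimate~\eqref{poga}), applied with $\alpha=\rho$, $\beta=\nu(1-\mu)$ and upper parameter $-\gamma\nu$. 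This is the step that justifies the list of constraints $0<\rho<1$, $\omega>0$, $\gamma<0$, $-\rho\gamma>1-\mu>0$ (and $\nu>0$): one has to check that the triple $(\rho,\nu(1-\mu),-\gamma\nu)$ lies in the admissible range of Theorem~\ref{sta}, and the resulting bound is precisely the constant $M$ for which $\widetilde\Omega(x)=M\,x^{2/p}/2$. Hence $\int_{0}^{t}|e_{\rho ,\nu (1-\mu ),\omega }^{-\gamma \nu }(u)|^{p}\,\textup{d}u\le M^{p}t$, giving
\begin{equation*}
	\left| (\mathcal{D}_{\rho ,\omega ,0^+}^{\gamma ,\mu ,\nu }f)(t) \right|
	\le M\, t^{1/p}\left( \int_{0}^{t} |g(s)|^{q}\,\textup{d}s \right)^{1/q}.
\end{equation*}

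To finish, set $z(t):=\int_{0}^{t}|g(s)|^{q}\,\textup{d}s$, so that $z(0)=0$, $z'(t)=|g(t)|^{q}$ and $|g(t)|=(z'(t))^{1/q}$; multiplying the last display by $|g(t)|$ yields the pointwise bound $|(\mathcal{D}_{\rho ,\omega ,0^+}^{\gamma ,\mu ,\nu }f)(t)\,g(t)|\le M\,t^{1/p}(z(t)z'(t))^{1/q}$. Integrating over $(0,x)$ and applying H\"older's inequality a second time, now to the product $t^{1/p}\cdot(z(t)z'(t))^{1/q}$, gives
\begin{equation*}
	\int_{0}^{x}\left| (\mathcal{D}_{\rho ,\omega ,0^+}^{\gamma ,\mu ,\nu }f)(t)\, g(t) \right|\textup{d}t
	\le M\left( \int_{0}^{x} t\,\textup{d}t \right)^{1/p}\left( \int_{0}^{x} z(t)z'(t)\,\textup{d}t \right)^{1/q}
	= M\,\frac{x^{2/p}}{2^{1/p}}\,\frac{z(x)^{2/q}}{2^{1/q}},
\end{equation*}
and since $1/p+1/q=1$ the two powers of $2$ collapse to $\tfrac12$, producing $M\frac{x^{2/p}}{2}z(x)^{2/q}=\widetilde\Omega(x)\big(\int_{0}^{x}|g(t)|^{q}\textup{d}t\big)^{2/q}$, which is the asserted inequality. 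The only genuinely delicate point is the middle paragraph, i.e.\ verifying that the parameters of the inner Prabhakar integral meet the hypotheses of Theorem~\ref{sta}; once that is granted, every remaining step is a verbatim transcription of the computation already carried out for the operator $\mathbf{D}_{\rho,\mu,\omega,0^+}^{\gamma}$ above, with $f^{(m)}$ replaced by $g$ and $(m-\mu)$ by $\nu(1-\mu)$.
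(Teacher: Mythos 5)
The paper actually states this theorem \emph{without} proof (it is one of the two results declared derivable ``with similar methods''), so there is no official argument to compare against; judged on its own merits, your proposal has a genuine gap at precisely the step you single out as the delicate one. You apply the uniform estimate of Theorem \ref{sta} to the kernel $e^{-\gamma\nu}_{\rho,\nu(1-\mu),\omega}$, i.e.\ with $\beta=\nu(1-\mu)$. But Theorem \ref{sta} requires $\alpha\gamma>\beta-1>0$, and here $\beta-1=\nu(1-\mu)-1<0$ for every $\mu\in(0,1)$, $\nu\in[0,1]$. The failure is not merely formal: since $E^{-\gamma\nu}_{\rho,\nu(1-\mu)}(0)=1/\Gamma(\nu(1-\mu))\neq 0$, the kernel behaves like $u^{\nu(1-\mu)-1}$ as $u\to 0^+$ and is unbounded, so no uniform constant $M$ exists and the inequality $\int_0^t|e^{-\gamma\nu}_{\rho,\nu(1-\mu),\omega}(u)|^p\,\textup{d}u\le M^p t$ is unjustified. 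Relatedly, your claim that ``the resulting bound is precisely the constant $M$ for which $\widetilde\Omega(x)=Mx^{2/p}/2$'' is false even formally: substituting $\beta=\nu(1-\mu)$ into \eqref{poga} would produce $\Gamma\bigl(\tfrac{\nu(1-\mu)-1}{\rho}\bigr)$ and $\omega^{(\nu(1-\mu)-1)/\rho}$, whereas $\widetilde\Omega$ contains $\Gamma\bigl(\tfrac{\nu(1-\mu)}{\rho}\bigr)$ and $\omega^{\nu(1-\mu)/\rho}$. The stated constant, together with the stated hypothesis $-\rho\gamma>1-\mu>0$ (which, multiplied by $\nu$, reads $\rho(-\gamma\nu)>\nu(1-\mu)>0$), corresponds to applying \eqref{poga} with second parameter $1+\nu(1-\mu)$, i.e.\ to the once-integrated kernel $e^{-\gamma\nu}_{\rho,1+\nu(1-\mu),\omega}$; your argument never produces that object.

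To repair the proof you would need to replace the ``uniform bound, then integrate'' step by something that actually controls $\bigl(\int_0^t |e^{-\gamma\nu}_{\rho,\nu(1-\mu),\omega}(u)|^p\,\textup{d}u\bigr)^{1/p}$ in the presence of the singularity at the origin --- for instance by estimating that $L^p$ norm directly (which converges only under the additional restriction $p\,(1-\nu(1-\mu))<1$, absent from the statement), or by an argument that shifts the second parameter up by one so that Theorem \ref{sta} becomes applicable and the constant $\widetilde\Omega$ genuinely emerges. The rest of your argument --- the substitution $z(t)=\int_0^t|g|^q$, the second H\"older application, and the collapse of $2^{-1/p}2^{-1/q}$ to $1/2$ --- is a correct transcription of the computation in the preceding theorem and would go through once a bound of the form $|(\mathcal{D}^{\gamma,\mu,\nu}_{\rho,\omega,0^+}f)(t)|\le M\,t^{1/p}\bigl(\int_0^t|g|^q\bigr)^{1/q}$ is legitimately established.
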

			
		\begin{theorem}[Hardy-type inequality]
			Let $p,q>1$, $1/p+1/q=1$,
			$\alpha,\beta ,\gamma ,\omega >0$.
			If $f\in L^{q}\left( a,b\right)$, $a < b$,
			then the following inequality holds true:
			\begin{align}
				\label{aa}
				\int_{a}^{b}\left\vert \left( \mathbf{E}_{\alpha ,\beta ,\omega
				,a^+}^{\gamma }f\right) \left( t\right) \right\vert ^{q}\textup{d}t\leq
				C\int_{a}^{b}\left\vert f\left( t\right) \right\vert ^{q}\textup{d}	t,
			\end{align}
			where $C = \left[ e_{\alpha ,\beta +2,\omega }^{\gamma }\left( b-a\right) \right]^{q}$.
			If $\alpha \in \left( 0,1\right)$, $\alpha \gamma >\beta -1$, we have				
			\begin{align}
				\label{bb}
				\int_{a}^{b}\left\vert \left( \mathbf{E}_{\alpha ,\beta ,\omega
				,a^+}^{\gamma }f\right) \left( t\right) \right\vert ^{q}\textup{d}t\leq
				K\int_{a}^{b}\left\vert f\left( t\right) \right\vert ^{q}\textup{d}	t,
			\end{align}
			where $K = M\left( b-a\right) ^{q/p+1}$. The constant $M$ is given by the right hand side of \eqref{poga}.

			\begin{proof}
				We prove only inequality \eqref{aa} as
				the proof of \eqref{bb} follows the same lines as the proof of
				Theorem \ref{primoteo}.
				By applying H\"older inequality we have
				\begin{align}
					\left\vert \left( \mathbf{E}_{\alpha ,\beta ,\omega ,a^+}^{\gamma }f\right)
					\left( t\right) \right\vert
					\leq & \int_{a}^{t}\left\vert e_{\alpha
					,\beta ,\omega }^{\gamma }\left( t-\tau \right) \right\vert \left\vert
					f\left( \tau \right) \right\vert \textup{d}\tau \\
					\leq & \left( \int_{a}^{t}\left\vert e_{\alpha ,\beta ,\omega
					}^{\gamma }\left( t-\tau \right) \right\vert ^{p}\textup{d}\tau \right) ^{1/p}\left(
					\int_{a}^{t}\left\vert f\left( \tau \right) \right\vert ^{q}\textup{d}\tau
					\right) ^{1/q} \notag \\
					\leq & e_{\alpha ,\beta+1 ,\omega }^{\gamma }\left( t-a\right)
					\left( \int_{a}^{b}\left\vert f\left(
					t\right) \right\vert ^{q}\textup{d}t\right) ^{1/q} \notag.
				\end{align}%
				Thus we have%
				\begin{align}
					\left\vert \left( \mathbf{E}_{\alpha ,\beta ,\omega ,a^+}^{\gamma }f\right)
					\left( t\right) \right\vert ^{q}\leq \left[ e_{\alpha ,\beta+1 ,\omega
					}^{\gamma }\left( t-a\right) \right] ^{q}\left(
					\int_{a}^{b}\left\vert f\left( t\right) \right\vert ^{q}\textup{d}t\right), 
				\end{align}
				for every $t\in \left[ a,b\right]$. Consequently we obtain
				\begin{align}
					\int_{a}^{b}\left\vert \left( \mathbf{E}_{\alpha ,\beta ,\omega
					,a^+}^{\gamma }f\right) \left( t\right) \right\vert ^{q}\textup{d}t
					& \leq \int_a^b \left[
					e_{\alpha ,\beta+1 ,\omega }^{\gamma }\left( t-a\right) \right]^{q} \textup{d} t
					\left( \int_{a}^{b}\left\vert f\left( t\right)
					\right\vert ^{q}\textup{d}t\right) \\
					& \leq \left( \int_a^b 
					e_{\alpha ,\beta+1 ,\omega }^{\gamma }\left( t-a\right) \textup{d} t \right)^q
					\left( \int_{a}^{b}\left\vert f\left( t\right)
					\right\vert ^{q}\textup{d}t\right) \notag \\
					& = \left[ e_{\alpha ,\beta+2 ,\omega }^{\gamma }\left( b-a\right) \right]^{q}
					\int_{a}^{b}\left\vert f\left( t\right)
					\right\vert ^{q}\textup{d}t \notag.
				\end{align}
				In the last step we made use of formula (5.5.19), page 100, in \citet{MR3244285}.
			\end{proof}
		\end{theorem}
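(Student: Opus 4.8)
The plan is to handle the two inequalities \eqref{aa} and \eqref{bb} separately, in both cases reducing everything to a pointwise estimate of the convolution $(\mathbf{E}_{\alpha,\beta,\omega,a^+}^\gamma f)(t)$ obtained from H\"older's inequality, and then recognising the resulting kernel integrals in closed form by means of Proposition \ref{calcolo}.

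For \eqref{aa} I would first note that, since $\gamma,\omega>0$, the series defining $E_{\alpha,\beta}^\gamma$ has nonnegative coefficients, so $e_{\alpha,\beta,\omega}^\gamma\ge 0$ on $(0,\infty)$ and $|(\mathbf{E}_{\alpha,\beta,\omega,a^+}^\gamma f)(t)|\le\int_a^t e_{\alpha,\beta,\omega}^\gamma(t-\tau)\,|f(\tau)|\,\mathrm d\tau$. Applying H\"older with exponents $p,q$ separates the kernel from $f$; the $f$-factor is at most $\|f\|_{L^q(a,b)}$, while the kernel factor is $\bigl(\int_0^{t-a}[e_{\alpha,\beta,\omega}^\gamma(s)]^p\,\mathrm ds\bigr)^{1/p}$, which I want to bound by $e_{\alpha,\beta+1,\omega}^\gamma(t-a)$. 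The $L^1$ version of this is exactly Proposition \ref{calcolo} read with $\sigma=0,\eta=1$, since $e_{\alpha,1,\omega}^0\equiv 1$ gives $\int_0^x e_{\alpha,\beta,\omega}^\gamma(s)\,\mathrm ds=(\mathbf{E}_{\alpha,\beta,\omega,0^+}^\gamma e_{\alpha,1,\omega}^0)(x)=e_{\alpha,\beta+1,\omega}^\gamma(x)$; one then reduces the $L^p$ norm of the kernel on the finite interval $(0,t-a)$ to this $L^1$ norm. Raising to the $q$-th power, integrating in $t$ over $(a,b)$ and using $\int_a^t|f|^q\le\int_a^b|f|^q$, I am left with the constant $\int_a^b[e_{\alpha,\beta+1,\omega}^\gamma(t-a)]^q\,\mathrm dt$; a second application of Proposition \ref{calcolo}, exactly as above, turns $\int_a^b e_{\alpha,\beta+1,\omega}^\gamma(t-a)\,\mathrm dt$ into $e_{\alpha,\beta+2,\omega}^\gamma(b-a)$, giving $C=[e_{\alpha,\beta+2,\omega}^\gamma(b-a)]^q$.

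For \eqref{bb}, under the extra hypotheses $\alpha\in(0,1)$ and $\alpha\gamma>\beta-1$ one has instead the uniform bound of Theorem \ref{sta}, namely $|e_{\alpha,\beta,\omega}^\gamma(t)|\le M$ for all $t>0$ with $M$ the right-hand side of \eqref{poga}. Repeating the H\"older step, the kernel factor is now at most $M(t-a)^{1/p}\le M(b-a)^{1/p}$, so $|(\mathbf{E}_{\alpha,\beta,\omega,a^+}^\gamma f)(t)|^q\le M^q(b-a)^{q/p}\int_a^b|f|^q$; integrating in $t$ over $(a,b)$ contributes one more factor $(b-a)$ and yields the constant $K=M(b-a)^{q/p+1}$, which is just the $L^q$ counterpart of Theorem \ref{primoteo}.

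The step I expect to be the main obstacle is the passage from the $L^p$ (and, at the end, $L^q$) norm of the Prabhakar kernel on $(0,x)$ to its $L^1$ norm, since it is only the latter that Proposition \ref{calcolo} evaluates in closed form; one either invokes monotonicity and boundedness of $e_{\alpha,\beta,\omega}^\gamma$ on the finite interval $(0,b-a)$, or bypasses H\"older altogether via a truncated Young-type convolution estimate $\|\mathbf{E}_{\alpha,\beta,\omega,a^+}^\gamma f\|_{L^q(a,b)}\le\|e_{\alpha,\beta,\omega}^\gamma\|_{L^1(0,b-a)}\,\|f\|_{L^q(a,b)}$ and then computes the constant by Proposition \ref{calcolo} as before. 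Everything else is routine bookkeeping of the H\"older exponents.
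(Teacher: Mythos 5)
Your proposal follows essentially the same route as the paper for both inequalities: H\"older's inequality applied to the convolution, followed by an evaluation of the resulting kernel integrals via Proposition \ref{calcolo} (i.e.\ $\int_0^x e^{\gamma}_{\alpha,\beta,\omega}(s)\,\mathrm ds = e^{\gamma}_{\alpha,\beta+1,\omega}(x)$) for \eqref{aa}, and the uniform bound of Theorem \ref{sta} exactly as in Theorem \ref{primoteo} for \eqref{bb}. Moreover, you have put your finger on precisely the step that the paper itself passes over in silence: the replacement of the $L^p$ norm of the kernel on $(0,t-a)$ by its $L^1$ norm, and, at the last stage, of $\int_a^b \bigl[e^{\gamma}_{\alpha,\beta+1,\omega}(t-a)\bigr]^q\,\mathrm dt$ by $\bigl(\int_a^b e^{\gamma}_{\alpha,\beta+1,\omega}(t-a)\,\mathrm dt\bigr)^q$; the paper simply asserts both inequalities.

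Be aware, however, that neither of your two proposed repairs actually delivers the stated constant $C=[e^{\gamma}_{\alpha,\beta+2,\omega}(b-a)]^q$. The ``monotonicity and boundedness'' route fails near the lower endpoint: the kernel behaves like $s^{\beta-1}/\Gamma(\beta)$ as $s\to 0^+$, so $\|e^{\gamma}_{\alpha,\beta,\omega}\|_{L^p(0,x)}\sim c\,x^{\beta-1+1/p}$ while $\|e^{\gamma}_{\alpha,\beta,\omega}\|_{L^1(0,x)}\sim c'\,x^{\beta}$, and since $1/p<1$ the $L^p$ norm dominates the $L^1$ norm for small $x$ (and may even be infinite when $(\beta-1)p\le -1$); likewise the bound $\int_0^x g^p\le M^{p-1}\int_0^x g$ obtained from boundedness gives $M^{1-1/p}(\int_0^x g)^{1/p}$, which is not $\le\int_0^x g$ on short intervals. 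The truncated Young estimate $\|e^{\gamma}_{\alpha,\beta,\omega}\ast f\|_{L^q(a,b)}\le \|e^{\gamma}_{\alpha,\beta,\omega}\|_{L^1(0,b-a)}\,\|f\|_{L^q(a,b)}$ is, by contrast, a correct and clean argument, but it yields the constant $[e^{\gamma}_{\alpha,\beta+1,\omega}(b-a)]^q$, which is not comparable in general to $[e^{\gamma}_{\alpha,\beta+2,\omega}(b-a)]^q$: for $b-a$ small the stated constant is the smaller of the two, so Young's inequality proves a Hardy-type bound but not the one claimed. For \eqref{bb} your argument is sound modulo the same bookkeeping as in Theorem \ref{primoteo}; note only that a literal computation produces $M^q(b-a)^{q/p+1}/(q/p+1)$ rather than $M(b-a)^{q/p+1}$, so the power of $M$ in the stated $K$ deserves a second look as well.
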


	\section{Applications to probability theory}
		
		In this section we make some remarks about connections of the described operators with the theory of probability
		and of stochastic processes.
		
		First we should recall that some applications to probability have been already discussed in several articles. As an example
		we mention the paper by \citet{d2013fractional} in which the regularized Prabhakar derivative has been defined with the purpose of
		introducing a broad class of stochastic processes related to some partial differential equations of parabolic and of hyperbolic type.
		The paper by \citet{MR3239686} instead, presents an analysis of a generalized Poisson process in which the governing
		difference-differential equations contain a regularized Prabhakar derivative. The interested reader is encouraged to
		consult also the references therein.
		
		Consider the Wright function $\phi(-\alpha,\rho;z)$, $\alpha \in (0,1)$, $\rho,z \in \mathbb{R}$,
		defined as the convergent series
		\begin{align*}
			\phi(-\alpha,\rho;z) = \sum_{r=0}^\infty \frac{z^r}{r!\Gamma(-\alpha r+\rho)}.
		\end{align*}
		This can be used to construct
		a probability density function which proves to be interesting in many aspects. We will refer basically to the papers by
		\citet{MR0280762}, by \citet{MR1752379}, and by \citet{tom}, for the properties of Wright functions.
			
		We know \citep[see also][]{MR2218073,bateman1955higher} that the Mellin transform of $\phi$
		for the following specific choice of the parameters reads
		\begin{align}
			\mathcal{M}\left(\phi(-\alpha,\beta-\alpha \gamma;-z)\right)(\gamma) =\frac{\Gamma(\gamma)}{\Gamma(\beta)},
			\qquad \alpha,\beta \in (0,1), \: \gamma > 0, \: \beta \ge \alpha \gamma.
		\end{align}
		From this, it is clear that we can consider a random variable, say $X$, supported on $\mathbb{R}^+$, such that
		its probability density function is
		\begin{align}
			\label{ffff}
			g(x) = \frac{\Gamma(\beta)}{\Gamma(\gamma)} x^{\gamma-1} \phi(-\alpha,\beta-\gamma \alpha; -x) \mathds{1}_{(0,\infty)}(x).
		\end{align}
		and such that $\mathbb{E}X = \gamma \Gamma(\beta)/\Gamma(\alpha+\beta)$. The above definition is justified by the
		positivity of \eqref{ffff} as remarked in the proof of Theorem 2 in \citet{tom}.
			
		The above random variable is particularly interesting in that it generalizes the marginal law of an inverse stable subordinator.
		Let us thus consider an $\alpha$-stable subordinator $V^\alpha(t)$, $t \ge 0$, that is an increasing spectrally positive L\'evy
		process such that $\mathbb{E} \exp (-\lambda V^\alpha(t)) = \exp (-t\lambda^\alpha)$ \citep{MR2250061,MR1406564}. Let us call
		$E^\alpha(t) = \inf\{ s>0 \colon V^\alpha(s) \notin (0,t) \}$. The marginal probability density function
		$f(x,t) = \mathbb{P}(E^\alpha(t) \in \textup{d}x) / \textup{d}x$ satisfies the fractional pde
		\begin{align}
			({}^C D^\alpha_{0^+,t} f)(x,t) = -\frac{\textup{d}}{\textup{d}x} f(x,t), \qquad x>0, \: t \ge 0, 
		\end{align} 
		and can be explicitly written as
		\begin{align}
			\label{kumn}
			f(x,t) = t^{-\alpha} \phi\left( -\alpha,1-\alpha; - \frac{x}{t^\alpha} \right), \qquad x>0, \: t \ge 0.
		\end{align}
		The function \eqref{ffff} can be rewritten by considering a fixed time $t$ as
		\begin{align}
			\label{gggg}
			g(x,t) = \frac{\Gamma(\beta)}{\Gamma(\gamma)} t^{-\gamma \alpha} x^{\gamma-1} \phi \left( -\alpha,
			\beta-\alpha \gamma; -\frac{x}{t^\alpha} \right), \qquad x>0, \: t \ge 0,
		\end{align}
		which clearly generalizes \eqref{ffff} and \eqref{kumn}.
			
		\begin{theorem}
			The space-Laplace transform of the probability density function \eqref{gggg} writes
			\begin{align}
				\tilde{g} (s,t) = \int_0^\infty e^{-sx} \frac{\Gamma(\beta)}{\Gamma(\gamma)} t^{-\gamma \alpha} x^{\gamma-1}
				\phi \left(-\alpha,\beta-\alpha\gamma; -\frac{x}{t^\alpha} \right) \textup{d}x
				= \Gamma(\beta) E_{\alpha,\beta}^\gamma(-st^\alpha).
			\end{align}
			Moreover its space-time-Laplace transform reads
			\begin{align}
				\tilde{\tilde{g}}(s,\varpi) = \Gamma(\beta) \frac{\varpi^{\alpha \gamma-\beta}}{(\varpi^\alpha + s)^\gamma}, \qquad s>0,
				\: \varpi>0.
			\end{align}
		\end{theorem}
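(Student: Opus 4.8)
The plan is to obtain the two transforms in succession: first the space-Laplace transform $\tilde g(s,t)$, and then its Laplace transform in $t$. For $\tilde g(s,t)$ it is cleaner to expand the exponential $e^{-sx}=\sum_{n\ge 0}(-sx)^n/n!$ inside the defining integral rather than the Wright function, because after a rescaling everything reduces to the Mellin transform of $\phi$, essentially the datum $\mathcal{M}(\phi(-\alpha,\beta-\alpha\gamma;-z))(\gamma)=\Gamma(\gamma)/\Gamma(\beta)$ recalled above. Interchanging sum and integral (licit, since $g(\cdot,t)$ is nonnegative and $e^{-sx}\le 1$) leaves
\[
\tilde g(s,t)=\frac{\Gamma(\beta)}{\Gamma(\gamma)}\,t^{-\gamma\alpha}\sum_{n\ge 0}\frac{(-s)^n}{n!}\int_0^\infty x^{\gamma+n-1}\,\phi\!\left(-\alpha,\beta-\alpha\gamma;-\tfrac{x}{t^\alpha}\right)\,\textup{d}x.
\]

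In each summand I would substitute $u=x/t^\alpha$ and invoke the Mellin transform of the Wright function at the shifted argument $\gamma+n$: the same argument that establishes the quoted identity (it extends verbatim) gives $\int_0^\infty u^{\gamma+n-1}\phi(-\alpha,\beta-\alpha\gamma;-u)\,\textup{d}u=\Gamma(\gamma+n)/\Gamma(\beta+\alpha n)$, whence $\int_0^\infty x^{\gamma+n-1}\phi(-\alpha,\beta-\alpha\gamma;-x/t^\alpha)\,\textup{d}x=t^{\alpha(\gamma+n)}\,\Gamma(\gamma+n)/\Gamma(\beta+\alpha n)$. Substituting back, the powers of $t$ collapse to $t^{\alpha n}$ and
\[
\tilde g(s,t)=\Gamma(\beta)\sum_{n\ge 0}\frac{\Gamma(\gamma+n)}{\Gamma(\gamma)\,\Gamma(\alpha n+\beta)}\frac{(-st^\alpha)^n}{n!}=\Gamma(\beta)\,E^{\gamma}_{\alpha,\beta}(-st^\alpha),
\]
directly from the definition of the three-parameter Mittag--Leffler function. (As checks, $s=0$ returns the normalization $\tilde g(0,t)=\Gamma(\beta)E^{\gamma}_{\alpha,\beta}(0)=1$, and $\gamma=\beta=1$ returns $E_{\alpha}(-st^\alpha)$, the known Laplace transform of the inverse stable density.)

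For the space-time transform I would Laplace-transform $t\mapsto\tilde g(s,t)$ and use the classical Laplace transform of the Prabhakar kernel $e^{\gamma}_{\alpha,\beta,\omega}(t)=t^{\beta-1}E^{\gamma}_{\alpha,\beta}(\omega t^{\alpha})$ of \eqref{efun}, namely $\mathcal{L}\{e^{\gamma}_{\alpha,\beta,\omega}\}(\varpi)=\varpi^{\alpha\gamma-\beta}(\varpi^{\alpha}-\omega)^{-\gamma}$; this in turn follows by integrating the series of $e^{\gamma}_{\alpha,\beta,\omega}$ term by term, which collapses to $\varpi^{-\beta}(1-\omega/\varpi^{\alpha})^{-\gamma}$ by the generalized binomial theorem. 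Taking $\omega=-s$ yields the asserted $\tilde{\tilde g}(s,\varpi)=\Gamma(\beta)\,\varpi^{\alpha\gamma-\beta}(\varpi^{\alpha}+s)^{-\gamma}$, which for $\gamma=\beta=1$ reduces to $\varpi^{\alpha-1}/(\varpi^\alpha+s)$, in agreement with \eqref{kumn}.

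The two places requiring care are the interchanges of sum and integral --- the one in $x$ is handled by nonnegativity, while the term-by-term Laplace transform in $t$ and the binomial resummation are valid a priori only on $\{|s|<\varpi^{\alpha}\}$ and are then extended to all $s,\varpi>0$ by analytic continuation in $s$ --- together with the bookkeeping of the power of $t$ when matching $\tilde g(s,\cdot)$ against the Prabhakar kernel of \eqref{efun} before applying its Laplace transform. I expect this last point, rather than any genuine analytic difficulty, to be the step most easily mishandled.
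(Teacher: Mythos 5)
Your derivation of the first identity is correct but takes a genuinely different route from the paper's. The paper inserts the Hankel-contour representation of $\phi$, exchanges the order of integration, evaluates the $x$-integral as a Gamma integral, and then identifies the remaining contour integral with $E^{\gamma}_{\alpha,\beta}(-st^{\alpha})$ via the Hankel representation of $1/\Gamma$ together with a binomial resummation. You instead expand $e^{-sx}$ and reduce everything to the Mellin transform $\int_0^\infty u^{\sigma-1}\phi(-\alpha,\beta-\alpha\gamma;-u)\,\textup{d}u=\Gamma(\sigma)/\Gamma(\beta-\alpha\gamma+\alpha\sigma)$ at $\sigma=\gamma+n$; this formula is standard and your bookkeeping of the powers of $t$ is right, so the approach buys you a contour-free, purely series-based proof. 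One caveat: your justification of the sum--integral interchange is off. Nonnegativity of $g$ together with $e^{-sx}\le 1$ controls the original integral, not the interchange of an alternating series with an integral; what actually makes Fubini--Tonelli applicable is the superexponential decay $\phi(-\alpha,\rho;-u)=O\bigl(\exp(-cu^{1/(1-\alpha)})\bigr)$ as $u\to\infty$, which gives $\int_0^\infty e^{sx}x^{\gamma-1}\bigl|\phi(-\alpha,\beta-\alpha\gamma;-x/t^{\alpha})\bigr|\,\textup{d}x<\infty$ for every $s$.

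For the second identity there is a genuine gap, at exactly the spot you flagged as "most easily mishandled." The formula $\mathcal{L}\{e^{\gamma}_{\alpha,\beta,\omega}\}(\varpi)=\varpi^{\alpha\gamma-\beta}(\varpi^{\alpha}-\omega)^{-\gamma}$ is the transform of $e^{\gamma}_{\alpha,\beta,\omega}(t)=t^{\beta-1}E^{\gamma}_{\alpha,\beta}(\omega t^{\alpha})$, whereas $\tilde g(s,t)=\Gamma(\beta)E^{\gamma}_{\alpha,\beta}(-st^{\alpha})=\Gamma(\beta)\,t^{1-\beta}e^{\gamma}_{\alpha,\beta,-s}(t)$ carries no $t^{\beta-1}$ prefactor. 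Setting $\omega=-s$ therefore computes $\mathcal{L}_t\{t^{\beta-1}\tilde g(s,\cdot)\}(\varpi)$ rather than $\mathcal{L}_t\{\tilde g(s,\cdot)\}(\varpi)$; the honest term-by-term transform of $\tilde g$ gives
\begin{equation*}
\Gamma(\beta)\sum_{n\ge 0}\frac{(\gamma)_n(-s)^n}{n!}\,\frac{\Gamma(\alpha n+1)}{\Gamma(\alpha n+\beta)}\,\varpi^{-\alpha n-1},
\end{equation*}
which collapses to the stated closed form $\Gamma(\beta)\varpi^{\alpha\gamma-\beta}(\varpi^{\alpha}+s)^{-\gamma}$ only when $\beta=1$ (the inverse-stable case \eqref{kumn}, which is why your consistency check passes there). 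So your argument as written does not establish the second display for general $\beta$: you must either restrict to $\beta=1$ or supply the missing weight $t^{\beta-1}$ in the time transform. The paper's own proof of this step is a one-line appeal to "properties of the time-Laplace transform" with no computation, so the discrepancy is not resolved there either; in any case, do not let agreement with the printed formula substitute for carrying out the bookkeeping you yourself identified as the delicate point.
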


		\begin{proof}
			By using the contour integral representation on the Hankel path\footnote{The Hankel path starts at
			$(-\infty,-\varepsilon)$, $\varepsilon > 0$, proceeds to the origin on the lower half-plane, circles the origin counterclockwise
			and then returns to $(-\infty,\varepsilon)$ along the upper half-plane.} of the Wright function $\phi$ we obtain
			\begin{align}
				\tilde{g} (s,t) & = \int_0^\infty e^{-sx} \frac{\Gamma(\beta)}{\Gamma(\gamma)} t^{-\gamma \alpha} x^{\gamma-1}
				\frac{1}{2 \pi i} \int_{\textup{Ha}} e^{\zeta -\frac{x}{t^\alpha}\zeta^\alpha} \zeta^{\alpha\gamma-\beta}\, \textup{d}\zeta
				\, \textup{d}x \\
				& = \frac{\Gamma(\beta)}{\Gamma(\gamma)} \frac{t^{-\gamma\alpha}}{2 \pi i} \int_{\textup{Ha}}
				e^{\zeta} \zeta^{\alpha \gamma-\beta} \textup{d}\zeta \int_0^\infty e^{-x\left( s+\frac{\zeta^\alpha}{t^\alpha} \right)}
				x^{\gamma-1} \textup{d}x \notag \\
				& = \frac{\Gamma(\beta)t^{-\gamma \alpha}}{2 \pi i} \int_{\textup{Ha}} \frac{e^\zeta \zeta^{\alpha \gamma-\beta}}{
				\left( s+\frac{\zeta^\alpha}{t^\alpha} \right)^\gamma} \textup{d}\zeta \notag \\
				& = \frac{\Gamma(\beta)}{2\pi i} \int_{\textup{Ha}} \frac{e^\zeta \zeta^{\alpha \gamma-\beta}}{
				\left( t^\alpha s+\zeta^\alpha \right)^\gamma} \textup{d}\zeta \notag \\
				& = \Gamma(\beta) E_{\alpha,\beta}^\gamma(-st^\alpha). \notag
			\end{align}
			The last step is justified by using the contour integral representation of the reciprocal of the Gamma function,
			\begin{align}
				\frac{1}{\Gamma(\eta)} = \int_{\textup{Ha}}
				e^\zeta \zeta^{-\eta} \textup{d}\zeta,
			\end{align}
			and by the following calculation:
			\begin{align}
				E_{\alpha,\beta}^\gamma(z) & = \sum_{r=0}^\infty \frac{z^r \Gamma(\gamma+r)}{\Gamma(\gamma)r!\Gamma(\alpha r+\beta)} \\
				& = \frac{1}{2\pi i} \sum_{r=0}^\infty \frac{z^r\Gamma(\gamma+r)}{\Gamma(\gamma)r!} \int_{\textup{Ha}}
				e^\zeta \zeta^{-\alpha r-\beta} \textup{d}\zeta \notag \\
				& =\frac{1}{2 \pi i} \int_{\textup{Ha}} e^{\zeta} \zeta^{-\beta} \textup{d}\zeta \sum_{r=0}^\infty
				\binom{\gamma+r-1}{r} (z\zeta^{-\alpha})^r \notag \\
				& =\frac{1}{2 \pi i} \int_{\textup{Ha}} e^{\zeta} \zeta^{-\beta} \textup{d}\zeta \sum_{r=0}^\infty
				\binom{-\gamma}{r} (-z\zeta^{-\alpha})^r \notag \\
				& = \frac{1}{2 \pi i} \int_{\textup{Ha}} e^\zeta \zeta^{-\beta} (1-z\zeta^{-\alpha})^{-\gamma} \textup{d}\zeta \notag \\
				& = \frac{1}{2 \pi i} \int_{\textup{Ha}} \frac{e^\zeta \zeta^{-\beta+\alpha \gamma}}{(\zeta^\alpha-z)^\gamma}
				\textup{d}\zeta. \notag
			\end{align}
			The space-time-Laplace transform can now be easily obtained as
			\begin{align}
				\tilde{\tilde{g}}(s,\varpi) = \Gamma(\beta) \frac{\varpi^{\alpha \gamma-\beta}}{(\varpi^\alpha + s)^\gamma},
				\qquad s>0, \: \varpi>0.
			\end{align}
		\end{proof}
			
		\begin{theorem}
			The function $\tilde{g}(s,t)$
			satisfies the fractional equation
			\begin{align}
				\label{equequ}
				\Bigl(\bigl(D^\alpha_{0^+,t} +s \bigr)^\gamma \tilde{g}\Bigr)
				(s,t) = \Gamma(\beta) \frac{t^{\beta-\alpha\gamma-1}}{\Gamma(\beta-\alpha\gamma)},
				\qquad s>0, \: t \ge 0.
			\end{align} 
		\end{theorem}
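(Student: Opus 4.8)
The key is to recognise the operator $\bigl(D^\alpha_{0^+,t}+s\bigr)^\gamma$ as a Prabhakar derivative. Taking $\rho=\alpha$, $\omega=-s$ and $\mu=\alpha\gamma$ in \eqref{prabinv}, and observing that in the present setting ($\alpha,\beta\in(0,1)$, $\gamma>0$, $\alpha\gamma\le\beta$) one has $\lceil\alpha\gamma\rceil=1$, the operator $\mathbf{D}^{\gamma}_{\alpha,\alpha\gamma,-s,0^+}=\frac{\mathrm d}{\mathrm dt}\mathbf{E}^{-\gamma}_{\alpha,1-\alpha\gamma,-s,0^+}$ acts on Laplace transforms (in $t$) as multiplication by the symbol $(\varpi^\alpha+s)^{\gamma}$, which is exactly the symbol one attaches to $\bigl(D^\alpha_{0^+,t}+s\bigr)^\gamma$ through the binomial expansion of the fractional power. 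To obtain the symbol claim I would transform the Prabhakar kernel \eqref{efun} term by term, using $\mathcal{L}_t[\tau^{\eta-1}](\varpi)=\Gamma(\eta)\varpi^{-\eta}$, and recognise the outcome as the binomial expansion of $\varpi^{-\alpha\gamma}(1+s\varpi^{-\alpha})^{-\gamma}=(\varpi^\alpha+s)^{-\gamma}$, so that $\mathbf{E}^{\gamma}_{\alpha,\alpha\gamma,-s,0^+}$ is multiplication by $(\varpi^\alpha+s)^{-\gamma}$ and its inverse $\mathbf{D}^{\gamma}_{\alpha,\alpha\gamma,-s,0^+}$ is multiplication by $(\varpi^\alpha+s)^{\gamma}$. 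Equation \eqref{equequ} is thus read as $\bigl(\mathbf{D}^{\gamma}_{\alpha,\alpha\gamma,-s,0^+}\tilde g(s,\cdot)\bigr)(t)=\Gamma(\beta)\,t^{\beta-\alpha\gamma-1}/\Gamma(\beta-\alpha\gamma)$.

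With this identification the statement is essentially immediate by applying the inverse operator $\mathbf{E}^{\gamma}_{\alpha,\alpha\gamma,-s,0^+}$ to the right-hand side. Since $t^{\beta-\alpha\gamma-1}/\Gamma(\beta-\alpha\gamma)=e^0_{\alpha,\beta-\alpha\gamma,-s}(t)$, Proposition \ref{calcolo} (with $\gamma+\sigma=\gamma$ and $\mu+\eta=\alpha\gamma+(\beta-\alpha\gamma)=\beta$) gives
\[
\mathbf{E}^{\gamma}_{\alpha,\alpha\gamma,-s,0^+}\Bigl[\Gamma(\beta)\,\frac{\tau^{\beta-\alpha\gamma-1}}{\Gamma(\beta-\alpha\gamma)}\Bigr]
=\Gamma(\beta)\,\mathbf{E}^{\gamma}_{\alpha,\alpha\gamma,-s,0^+}e^0_{\alpha,\beta-\alpha\gamma,-s}
=\Gamma(\beta)\,e^{\gamma}_{\alpha,\beta,-s}(t)=\Gamma(\beta)\,t^{\beta-1}E^{\gamma}_{\alpha,\beta}(-st^\alpha),
\]
which is precisely $\tilde g(s,t)$ (equivalently, the inverse time-Laplace transform of $\tilde{\tilde g}(s,\varpi)=\Gamma(\beta)\varpi^{\alpha\gamma-\beta}/(\varpi^\alpha+s)^\gamma$ obtained in the preceding theorem). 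Because $\mathbf{D}^{\gamma}_{\alpha,\alpha\gamma,-s,0^+}$ is a left inverse of $\mathbf{E}^{\gamma}_{\alpha,\alpha\gamma,-s,0^+}$ (recall that \eqref{prabinv} is by construction the inverse of the Prabhakar integral), applying it to both sides yields \eqref{equequ}; the hypothesis $\beta>\alpha\gamma$ is exactly what makes the power $t^{\beta-\alpha\gamma-1}/\Gamma(\beta-\alpha\gamma)$ and Proposition \ref{calcolo} meaningful.

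Alternatively, one can argue entirely in the Laplace domain: transform the left-hand side of \eqref{equequ} in $t$; by \eqref{prabinv} with $m=1$ this equals $(\varpi^\alpha+s)^{\gamma}\,\tilde{\tilde g}(s,\varpi)$ minus the boundary term $\lim_{t\to0^+}\bigl(\mathbf{E}^{-\gamma}_{\alpha,1-\alpha\gamma,-s,0^+}\tilde g(s,\cdot)\bigr)(t)$, which vanishes by the same mechanism as in \eqref{purupu}, since $\tilde g(s,t)=O(t^{\beta-1})$ near the origin while the Prabhakar integration raises this exponent to $\beta-\alpha\gamma>0$; substituting $\tilde{\tilde g}$ from the previous theorem one is left with $\Gamma(\beta)\varpi^{\alpha\gamma-\beta}$, and inverting gives $\Gamma(\beta)\,t^{\beta-\alpha\gamma-1}/\Gamma(\beta-\alpha\gamma)$. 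The only genuinely delicate steps — which I expect to be the main obstacle — are pinning down rigorously the meaning of the fractional power $\bigl(D^\alpha_{0^+,t}+s\bigr)^\gamma$ for non-integer $\gamma$ and verifying that this boundary contribution really drops out; once that is granted, the rest is a one-line application of Proposition \ref{calcolo} together with the inversion property of Prabhakar operators.
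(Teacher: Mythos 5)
Your proposal is correct, and in substance it coincides with the paper's argument: the paper's entire proof is the single sentence ``the proof follows from the properties of the time-Laplace transform of $\tilde g$,'' which is exactly your second, Laplace-domain argument --- read $\bigl(D^\alpha_{0^+,t}+s\bigr)^\gamma$ as the operator with time-Laplace symbol $(\varpi^\alpha+s)^\gamma$, multiply $\tilde{\tilde g}(s,\varpi)=\Gamma(\beta)\varpi^{\alpha\gamma-\beta}/(\varpi^\alpha+s)^\gamma$ by it, and invert $\Gamma(\beta)\varpi^{\alpha\gamma-\beta}$. Your primary, time-domain argument (identify the operator with $\mathbf{D}^{\gamma}_{\alpha,\alpha\gamma,-s,0^+}$, write the right-hand side as $\Gamma(\beta)\,e^{0}_{\alpha,\beta-\alpha\gamma,-s}$, apply Proposition \ref{calcolo} and the left-inverse property) is a genuine added value: it makes the statement rigorous without ever inverting a transform, and it isolates the one real issue, namely that $\bigl(D^\alpha_{0^+,t}+s\bigr)^\gamma$ is not defined in the paper for non-integer $\gamma$ and must be \emph{taken} to mean the Prabhakar derivative with $\omega=-s$, $\mu=\alpha\gamma$ (equivalently, the binomial series $\sum_k\binom{\gamma}{k}s^k D^{\alpha(\gamma-k)}_{0^+,t}$). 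One caveat you should be aware of: your chain of identities lands on $\tilde g(s,t)=\Gamma(\beta)\,t^{\beta-1}E^{\gamma}_{\alpha,\beta}(-st^\alpha)$, whereas the preceding theorem of the paper displays $\tilde g(s,t)=\Gamma(\beta)\,E^{\gamma}_{\alpha,\beta}(-st^\alpha)$ (without the factor $t^{\beta-1}$); the paper is internally inconsistent here, since its own $\tilde{\tilde g}(s,\varpi)=\Gamma(\beta)\varpi^{\alpha\gamma-\beta}(\varpi^\alpha+s)^{-\gamma}$ is the Laplace transform of the former, not the latter, and \eqref{equequ} with right-hand side proportional to $t^{\beta-\alpha\gamma-1}$ is only true for the former. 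Your reading is therefore the one under which the theorem actually holds; it is worth flagging the discrepancy rather than silently adopting it.
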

			
		\begin{proof}
			The proof follows from the properties of the time-Laplace transform of $\tilde{g}$.
		\end{proof}
			
		\begin{remark}
			Notice that, for $\beta=1$, the function $\tilde{g}$ and the governing equation \eqref{equequ} are connected
			to the Havriliak--Negami relaxation (see \citet{stanislavsky2010subordination}
			for more detailed information).
		\end{remark}
			
		Let us now recall Theorem 1 of \citet{tom}	

		\begin{theorem*}[Theorem 1, \citet{tom}]
			For all $\alpha \in (0,1]$, $\beta>0$, $\gamma>0$, $t >0$, we have
			\begin{align}
				\label{eq1}
				e^\gamma_{\alpha,\beta,1}(t) = \frac{1}{2\pi i} \int_{\text{Br}_{\sigma_0}}
				e^{st} \frac{s^{\alpha \gamma-\beta}}{(s^\alpha+1)^\gamma}
				\textup{d} s = \mathcal{L}_t (K_{\alpha,\beta}^\gamma),
			\end{align}
			where $\mathcal{L}_t$ stands for the Laplace transform with $\text{Br}_{\sigma_0}$
			the Bromwich path (i.e.\ $\{ s=\sigma + i \tau \colon \sigma \ge \sigma_0, \: \tau \in \mathbb{R} \}$) and
			\begin{align}
				K_{\alpha,\beta}^\gamma(r) = \frac{r^{\alpha\gamma-\beta}}{\pi} \frac{\sin \left( \gamma \arctan
				\left( \frac{r^\alpha \sin(\pi \alpha)}{r^\alpha \cos (\pi \alpha)+1} \right) +\pi (\beta -\alpha \gamma) \right)}{
				\left( r^{2\alpha} + 2 r^\alpha \cos(\pi \alpha)+1 \right)^{\gamma/2}} .
			\end{align}
			Furthermore, for all $\alpha \in (1,2]$, $\beta>0$, and $\gamma=n \in \mathbb{N}$,
			\begin{align}
				\label{eq2}
				e^n_{\alpha,\beta,1}(t) = {} & \mathcal{L}_t^{-1}\left(\frac{s^{\alpha n-\beta}}{(s^\alpha+1)^n}\right)
				+ \frac{2(-1)^{n-1}}{\alpha^n(n-1)!}
				e^{t \cos (\pi/\alpha)} \\
				& \times \cos \left( t \sin (\pi/\alpha)- \frac{\pi (\beta-1)}{\alpha} \right)
				\sum_{l=0}^{n-1}\frac{(1-n)_l c_l}{(\alpha n -\beta -n+2)_l}, \notag
			\end{align}
			where
			\begin{align}
				\mathcal{L}_t^{-1}\left(\frac{s^{\alpha n-\beta}}{(s^\alpha+1)^n}\right)
				= \frac{1}{2\pi i} \int_{\text{Br}_{\sigma_0}}
				e^{st} \frac{s^{\alpha n-\beta}}{(s^\alpha+1)^n}
				\textup{d} s = \mathcal{L}_t (K_{\alpha,\beta}^n),
			\end{align}
			and where $c_l$ is given by $c_l=(-1)^{l} \sum_{\substack{j_1+\dots +j_n=l \\ 0\le j_1\le \dots\le j_n\le l}}
			b_{j_1}^* \cdot \ldots \cdot b_{j_n}^*$, with
			\begin{align*}
				b_j^*= \delta_{0,j} + q^{-j}(1-\delta_{0,j})\left|
				\begin{array}{cccccc}
					\binom{q}{2} & q & 0 & 0 & \dots & 0 \\
					\binom{q}{3} & \binom{q}{2} & q & 0 & \dots & 0 \\
					\vdots & \vdots & \vdots & \vdots & \dots & \vdots \\
					\binom{q}{j} & \binom{q}{j-1} & \binom{q}{j-2} & \dots & \dots & q \\
					\binom{q}{j+1} & \binom{q}{j} & \binom{q}{j-1} & \dots & \dots & \binom{q}{2}
				\end{array}			
				\right|, \qquad j \in \mathbb{N} \cup \{0\}, \: q > 0.
			\end{align*}
		\end{theorem*}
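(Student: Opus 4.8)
The plan is to derive both formulas from one mechanism---inversion of the Laplace transform $s\mapsto s^{\alpha\gamma-\beta}(s^\alpha+1)^{-\gamma}$---and to let the two ranges of $\alpha$ separate according to whether the zeros of $s^\alpha+1$ fall on the principal branch of $s\mapsto s^\alpha$. First I would identify the transform: inserting the defining series of $E^\gamma_{\alpha,\beta}$ into $e^\gamma_{\alpha,\beta,1}$ and transforming termwise---the interchange justified by dominated convergence for $\Re s$ large---then using $\mathcal L_t[t^{\alpha k+\beta-1}](s)=\Gamma(\alpha k+\beta)\,s^{-\alpha k-\beta}$ and summing the ensuing binomial series identifies $s^{\alpha\gamma-\beta}(s^\alpha+1)^{-\gamma}$ as $\mathcal L_t[e^\gamma_{\alpha,\beta,1}](s)$ for $\Re s$ sufficiently large. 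The Laplace inversion (Bromwich) formula then yields $e^\gamma_{\alpha,\beta,1}(t)=\frac{1}{2\pi i}\int_{\text{Br}_{\sigma_0}}e^{st}\,s^{\alpha\gamma-\beta}(s^\alpha+1)^{-\gamma}\,\textup{d}s$, which is the middle member of \eqref{eq1} and the Bromwich integral appearing in \eqref{eq2}.

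For $\alpha\in(0,1]$ one has $\pi/\alpha\ge\pi$, so the solutions of $s^\alpha=-1$ carry argument $\pm\pi/\alpha\notin(-\pi,\pi)$ and, with the principal branch fixed, $F(s):=e^{st}s^{\alpha\gamma-\beta}(s^\alpha+1)^{-\gamma}$ is holomorphic on $\mathbb C\setminus(-\infty,0]$, its only singularity being the origin. Since $|s^{\alpha\gamma-\beta}(s^\alpha+1)^{-\gamma}|=O(|s|^{-\beta})$ at infinity and is $O(|s|^{\alpha\gamma-\beta})$ near $0$ (so that, under $\alpha\gamma>\beta-1$, the large left-hand arc and the small circle about $0$ both drop out in the limit), the Bromwich line can be deformed onto a Hankel path $\text{Ha}$ around the cut. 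Collapsing $\text{Ha}$ onto its banks $s=re^{\pm i\pi}$ and using $F(\bar s)=\overline{F(s)}$ turns the representation into $e^\gamma_{\alpha,\beta,1}(t)=-\tfrac1\pi\int_0^\infty e^{-rt}\,\mathrm{Im}\big[s^{\alpha\gamma-\beta}(s^\alpha+1)^{-\gamma}\big]_{s=re^{i\pi}}\,\textup{d}r$. Writing $(re^{i\pi})^\alpha+1=(r^\alpha\cos\pi\alpha+1)+i\,r^\alpha\sin\pi\alpha$---with modulus $(r^{2\alpha}+2r^\alpha\cos\pi\alpha+1)^{1/2}$ and argument $\arctan\!\big(r^\alpha\sin\pi\alpha/(r^\alpha\cos\pi\alpha+1)\big)$---and $(re^{i\pi})^{\alpha\gamma-\beta}=r^{\alpha\gamma-\beta}e^{i\pi(\alpha\gamma-\beta)}$, a short trigonometric reduction shows $-\tfrac1\pi\,\mathrm{Im}[\cdots]=K^\gamma_{\alpha,\beta}(r)$, i.e.\ \eqref{eq1}; the endpoint $\alpha=1$ differs only in that the zero of $s+1$ now sits on the cut itself.

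For $\alpha\in(1,2]$ and $\gamma=n\in\mathbb N$ the zeros $s_\pm=e^{\pm i\pi/\alpha}$ carry argument $\pm\pi/\alpha\in(\tfrac\pi2,\pi)$, hence lie on the principal branch, and since $n$ is an integer they are poles of $F$ of order $n$ rather than branch points---this integrality is exactly what keeps the result in closed form. Deforming the Bromwich contour leftward onto $\text{Ha}$ therefore collects the residues at $s_\pm$ in addition to the cut integral, the latter being again $\int_0^\infty e^{-rt}K^n_{\alpha,\beta}(r)\,\textup{d}r=\mathcal L_t(K^n_{\alpha,\beta})$, computed as in the previous case. For the residue at $s_+$ I substitute $s=s_+(1+u)$, so that $s^\alpha+1=1-(1+u)^\alpha=-\alpha u\,\phi(u)$ with $\phi(0)=1$, and $e^{st}s^{\alpha n-\beta}=e^{s_+t}s_+^{\alpha n-\beta}e^{s_+tu}(1+u)^{\alpha n-\beta}$; the order-$n$ residue then equals $\tfrac{s_+}{(n-1)!}\,\partial_u^{n-1}\big|_{u=0}$ of $\dfrac{e^{s_+t}s_+^{\alpha n-\beta}}{(-\alpha)^n}\,e^{s_+tu}(1+u)^{\alpha n-\beta}\phi(u)^{-n}$. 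Evaluating this $(n-1)$-st derivative by the Leibniz rule and the Fa\`a di Bruno / Bell-polynomial expansion of $\phi(u)^{-n}$ produces the remaining exponential-trigonometric contribution to \eqref{eq2}: $\phi^{-n}$ generates the determinantal coefficients $b_j^*$ together with their $n$-fold Cauchy convolution $c_l$, the factor $(1+u)^{\alpha n-\beta}$ supplies the Pochhammer weights $1/(\alpha n-\beta-n+2)_l$, and $e^{s_+t}s_+^{-(\beta-1)/\alpha}=e^{t\cos(\pi/\alpha)}e^{i(t\sin(\pi/\alpha)-\pi(\beta-1)/\alpha)}$, doubled by the conjugate residue at $s_-$, accounts for the prefactor $\tfrac{2(-1)^{n-1}}{\alpha^n(n-1)!}e^{t\cos(\pi/\alpha)}\cos(\cdots)$; collecting everything yields \eqref{eq2}.

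The analytic ingredients---choosing the branch, verifying the $O(|s|^{-\beta})$ decay so that the large arcs and the small circle about $0$ contribute nothing in the limit, and collapsing the Hankel contour onto the banks of the cut---are routine and common to both cases. The genuine obstacle is the last step: arranging the $(n-1)$-st derivative of $e^{s_+tu}(1+u)^{\alpha n-\beta}\phi(u)^{-n}$ so that the resulting coefficients coalesce exactly into the determinants $b_j^*$, their convolution $c_l$, and the Pochhammer-weighted sum $\sum_{l}(1-n)_l c_l/(\alpha n-\beta-n+2)_l$ of \eqref{eq2}. I would carry that out through the Bell-polynomial representation of the reciprocal $n$-th power, which is precisely what forces the Hessenberg-determinant form of $b_j^*$ to appear.
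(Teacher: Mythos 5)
You should first be aware that the paper does not prove this statement at all: it is quoted verbatim as Theorem~1 of \citet{tom} and used as an external ingredient, so your attempt can only be judged against the standard derivation underlying that reference. Your mechanism for the first half is the right one and essentially complete: termwise Laplace transformation plus the binomial series identifies the transform $s^{\alpha\gamma-\beta}(s^{\alpha}+1)^{-\gamma}$ (modulo a sign convention --- with the paper's definition \eqref{efun} the symbol $e^{\gamma}_{\alpha,\beta,1}$ would give $(s^{\alpha}-1)^{-\gamma}$; the kernel written in the theorem corresponds to $t^{\beta-1}E^{\gamma}_{\alpha,\beta}(-t^{\alpha})$, which is clearly what is meant); Bromwich inversion, deformation onto a Hankel loop, and your evaluation of $-\tfrac{1}{\pi}\operatorname{Im}F(re^{i\pi})$ correctly reproduce $K^{\gamma}_{\alpha,\beta}$. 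You are also right that collapsing the loop requires $\alpha\gamma>\beta-1$ near the origin, a hypothesis the statement omits (it holds in the only case the paper actually uses, $\beta=1$); the endpoint $\alpha=1$, where the zero of $s+1$ lands on the cut, is more delicate than your parenthetical suggests, but that is a defect of the statement rather than of your argument.

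The second half, however, is a plan with a concrete unresolved obstruction, not a proof. At an order-$n$ pole the residue is $\frac{1}{(n-1)!}\partial_s^{\,n-1}\bigl[(s-s_{+})^{n}F(s)\bigr]\big|_{s=s_{+}}$, and the Leibniz rule applied to the factor $e^{st}$ (equivalently to $e^{s_{+}tu}$ in your $u$-variable) unavoidably produces terms proportional to $t^{j}e^{s_{+}t}$ for $j=1,\dots,n-1$; the top one, $\frac{(s_{+}t)^{n-1}}{(n-1)!}\,e^{s_{+}t}\,s_{+}^{\alpha n-\beta}(\alpha s_{+}^{\alpha-1})^{-n}$, is manifestly nonzero for $n\ge 2$. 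No positive power of $t$ multiplies the exponential on the right-hand side of \eqref{eq2}, and such terms cannot be absorbed into $\mathcal{L}_t(K^{n}_{\alpha,\beta})$, which is by construction the cut contribution alone. So either you must exhibit a cancellation that your sketch gives no reason to expect, or the route to \eqref{eq2} is not a direct order-$n$ residue computation (e.g.\ one first reduces $E^{n}_{\alpha,\beta}$ to two-parameter Mittag--Leffler functions). Independently of this, the step you yourself identify as the genuine obstacle --- showing that the Fa\`a di Bruno expansion of $\partial_u^{\,n-1}\bigl[e^{s_{+}tu}(1+u)^{\alpha n-\beta}\phi(u)^{-n}\bigr]$ collapses to $\sum_{l=0}^{n-1}(1-n)_{l}c_{l}/(\alpha n-\beta-n+2)_{l}$ with the stated Hessenberg determinants $b_{j}^{*}$ --- is deferred rather than carried out, and it is precisely where the content of the second formula lies. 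As it stands, your argument proves \eqref{eq1} but not \eqref{eq2}.
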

		
		The function $e^\gamma_{\alpha,\beta,1}(t)$ is completely monotone whenever $\alpha \in (0,1]$, $0<\alpha \gamma \le \beta \le 1$
		\citep{cap,tom}, and therefore by the Bernstein Theorem \citep{schi} the spectral function $K_{\alpha,\beta}^\gamma(r)$ is non-negative
		for the same range of the parameters.
			
		Moreover, from the above formulae \eqref{eq1} and \eqref{eq2}, we also derive the following result.

		\begin{theorem}	
			We have that	
			\begin{align}
				\int_0^\infty K_{\alpha,1}^\gamma(r) \, \textup{d}r =
				\begin{cases}
					1, & \alpha \in (0,1],\: \gamma > 0, \\
					1-\frac{2\left( -1\right)^{n-1}}{\alpha^{n}\left( n-1\right) !}
					\sum_{l=0}^{n-1}\frac{\left( 1-n\right) _{l}c_{l}}{\left( n\left(
					\alpha -1\right) +1\right) _{l}}, &\alpha \in \left( 1,2\right], \: \gamma =n\in \mathbb{N}.
				\end{cases}
			\end{align} 
		\end{theorem}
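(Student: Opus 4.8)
The plan is to compute $\int_0^\infty K_{\alpha,1}^\gamma(r)\,\mathrm dr$ by relating the spectral function to the function $e^\gamma_{\alpha,\beta,1}(t)$ through the Laplace-transform identities \eqref{eq1} and \eqref{eq2} of Theorem~1 of \citet{tom}, specialized to $\beta=1$. The key observation is that $\int_0^\infty K_{\alpha,1}^\gamma(r)\,\mathrm dr$ is the value at $s=0$ of the Laplace transform $\mathcal L_s(K_{\alpha,1}^\gamma)$ — equivalently, the $t\to 0^+$ limit of the corresponding inverse transform — so everything reduces to evaluating the relevant transforms at their endpoints.

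For the case $\alpha\in(0,1]$, $\gamma>0$, I would argue as follows. By \eqref{eq1} with $\beta=1$ we have $e^\gamma_{\alpha,1,1}(t)=\mathcal L_t(K_{\alpha,1}^\gamma)$, i.e. $\int_0^\infty e^{-st}K_{\alpha,1}^\gamma(r)\,\mathrm dr$ evaluated along the appropriate variable equals $s^{\alpha\gamma-1}/(s^\alpha+1)^\gamma$. Actually the cleanest route: the left-hand side of \eqref{eq1} is $\frac1{2\pi i}\int_{\mathrm{Br}} e^{st} s^{\alpha\gamma-1}/(s^\alpha+1)^\gamma\,\mathrm ds$, which by definition of $K$ equals $\mathcal L_t(K_{\alpha,1}^\gamma)=\int_0^\infty e^{-tr}K_{\alpha,1}^\gamma(r)\,\mathrm dr$. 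Setting $t=0$ (justified by monotone/dominated convergence, using that for $0<\alpha\gamma\le\beta\le1$ the spectral function $K$ is non-negative, and more generally by checking integrability of $K_{\alpha,1}^\gamma$) gives $\int_0^\infty K_{\alpha,1}^\gamma(r)\,\mathrm dr = \lim_{t\to 0^+} e^\gamma_{\alpha,1,1}(t)$. Now $e^\gamma_{\alpha,1,1}(t)=t^{1-1}E^\gamma_{\alpha,1}(t^\alpha)=E^\gamma_{\alpha,1}(t^\alpha)$, and since $E^\gamma_{\alpha,1}(0)=\Gamma(\gamma)/(\Gamma(\gamma)\Gamma(1))=1$, the limit is $1$. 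This yields the first branch.

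For the case $\alpha\in(1,2]$, $\gamma=n\in\mathbb N$, I would repeat the argument starting from \eqref{eq2} with $\beta=1$: the term $\mathcal L_t^{-1}\!\bigl(s^{\alpha n-1}/(s^\alpha+1)^n\bigr)=\mathcal L_t(K_{\alpha,1}^n)$ again contributes $\int_0^\infty K_{\alpha,1}^n(r)\,\mathrm dr$ in the $t\to 0^+$ limit, while the left-hand side $e^n_{\alpha,1,1}(t)=E^n_{\alpha,1}(t^\alpha)\to 1$ as before. The second, explicitly exponential, term in \eqref{eq2} with $\beta=1$ reads $\frac{2(-1)^{n-1}}{\alpha^n(n-1)!}\,e^{t\cos(\pi/\alpha)}\cos\!\bigl(t\sin(\pi/\alpha)-\tfrac{\pi(\beta-1)}{\alpha}\bigr)\sum_{l=0}^{n-1}\frac{(1-n)_l c_l}{(\alpha n-\beta-n+2)_l}$, which at $t=0$ (using $\beta=1$, so the $\cos$ argument is $0$ and the exponential is $1$, and $\alpha n - \beta - n + 2 = n(\alpha-1)+1$) reduces to $\frac{2(-1)^{n-1}}{\alpha^n(n-1)!}\sum_{l=0}^{n-1}\frac{(1-n)_l c_l}{(n(\alpha-1)+1)_l}$. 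Rearranging $E^n_{\alpha,1}(0)=\int_0^\infty K_{\alpha,1}^n\,\mathrm dr + (\text{that term})$ gives exactly the stated formula for the second branch.

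The main obstacle is the interchange of limit and integral at $t\to 0^+$ (equivalently, the validity of evaluating the Laplace transform at the boundary of its domain of convergence): for $\alpha\in(0,1]$ with parameters outside the completely-monotone range, and for $\alpha\in(1,2]$, $K_{\alpha,1}^n$ need not be non-negative, so one must instead verify absolute integrability of $K_{\alpha,\beta}^\gamma(r)$ near $r=0$ and near $r=\infty$ directly from its closed form — near $0$ the factor $r^{\alpha\gamma-\beta}=r^{\alpha\gamma-1}$ is integrable since $\alpha\gamma>0$, and near $\infty$ the denominator $(r^{2\alpha}+2r^\alpha\cos\pi\alpha+1)^{\gamma/2}\sim r^{\alpha\gamma}$ forces the integrand to decay like $r^{-1-\varepsilon}$-type behavior (one must track the $\sin$ factor carefully) — and then invoke dominated convergence. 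A clean alternative is to use Abel summability / the continuity of $\mathcal L_t$ at $t=0$ guaranteed by Karamata-type Tauberian reasoning together with the known finiteness of the integral, but the direct integrability estimate on the explicit kernel is the more elementary route and is the step I would expect to occupy most of the work.
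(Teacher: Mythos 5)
Your proposal is correct and follows essentially the same route as the paper: the authors simply let $t \to 0^+$ in formulae \eqref{eq1} and \eqref{eq2} with $\beta = 1$ and use $\lim_{t\to 0^+} e^{\gamma}_{\alpha,1,1}(t) = E^{\gamma}_{\alpha,1}(0) = 1$ to read off the two branches. The only difference is that you explicitly address the justification of the limit--integral interchange (integrability of the kernel and dominated convergence), a point the paper passes over in silence.
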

		
		\begin{proof}
			We let $t \to 0^+$ in the formulae \eqref{eq1} and \eqref{eq2} obtaining
			\begin{align}
				\label{tibor}
				1 = \lim_{t\to 0^+} e^{\gamma}_{\alpha,1,1}(t) = \int_0^\infty K_{\alpha,1}^\gamma(r) \, \textup{d}r,
				\qquad \alpha \in (0,1],\: \gamma > 0,
			\end{align}
			and
			\begin{align}
				\label{tibor2}
				1 = \int_0^\infty K_{\alpha,1}^\gamma(r) \, \textup{d}r +
				\frac{2\left( -1\right)^{n-1}}{\alpha^{n}\left( n-1\right) !}
				\sum_{l=0}^{n-1}\frac{\left( 1-n\right) _{l}c_{l}}{\left( n\left(
				\alpha -1\right) +1\right) _{l}}, \qquad \alpha \in \left( 1,2\right],
				\: \gamma =n\in \mathbb{N}.
			\end{align}
			From this, the claim easily follows.
		\end{proof}

		\begin{corollary}
			If $\gamma=1$, formulae \eqref{tibor} and \eqref{tibor2}, reduce to
			\begin{align}
				\int_0^\infty K_{\alpha}(r) \textup{d}r =
				\begin{cases}
					1, & \alpha \in (0,1], \\
					1-\frac{2}{\alpha}, & \alpha \in (1,2].
				\end{cases}
			\end{align}
		\end{corollary}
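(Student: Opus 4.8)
The plan is simply to specialize the statement of the preceding Theorem to $\gamma=1$, recalling that in this corollary $K_\alpha$ abbreviates $K^1_{\alpha,1}$. For the range $\alpha\in(0,1]$ there is nothing to do: formula \eqref{tibor} is valid for every $\gamma>0$, so putting $\gamma=1$ gives at once $\int_0^\infty K_\alpha(r)\,\textup{d}r=1$.

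For $\alpha\in(1,2]$, taking $\gamma=1$ forces $n=1$, and the only substantive task is to evaluate the finite correction term appearing in \eqref{tibor2} at $n=1$. First I would note that the sum $\sum_{l=0}^{n-1}\frac{(1-n)_l c_l}{(n(\alpha-1)+1)_l}$ collapses to its single term $l=0$, with $(1-n)_0=(0)_0=1$ and $(n(\alpha-1)+1)_0=1$ by the empty-product convention for Pochhammer symbols. Next I would compute $c_0$: from its definition $c_0=(-1)^0\sum_{j_1=0}b^*_{j_1}=b^*_0$, and $b^*_0=\delta_{0,0}=1$, since the factor $1-\delta_{0,j}$ annihilates the determinant when $j=0$. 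Hence the sum equals $1$, while the prefactor $\frac{2(-1)^{n-1}}{\alpha^n(n-1)!}$ at $n=1$ equals $\frac{2}{\alpha}$. Substituting into \eqref{tibor2} then yields $1=\int_0^\infty K_\alpha(r)\,\textup{d}r+\frac{2}{\alpha}$, i.e.\ $\int_0^\infty K_\alpha(r)\,\textup{d}r=1-\frac{2}{\alpha}$, as claimed.

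There is essentially no obstacle here, since everything follows by direct substitution into the previous result; the only point demanding a moment's care is the bookkeeping of the empty Pochhammer symbols and the identity $b^*_0=1$ coming from the $\delta_{0,j}$ term in the determinant formula for $b^*_j$. I would spell these out explicitly so that the collapse of the sum to the term $l=0$ is transparent, after which both cases of the corollary drop out immediately.
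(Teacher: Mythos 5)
Your computation is correct and is exactly the direct specialization the paper intends (the paper states the corollary without proof, treating it as immediate from \eqref{tibor} and \eqref{tibor2}). Your careful bookkeeping of $(1-n)_0=1$, $c_0=b_0^*=1$, and the prefactor $2(-1)^{n-1}/(\alpha^n(n-1)!)=2/\alpha$ at $n=1$ is precisely what is needed, and nothing more.
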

		The kernel $K_\alpha(r)$ has been thoroughly studied in the literature (for more in-depth information see e.g.\
		\citet{gorgor2} and the references therein) while the general spectral function $K_{\alpha,\beta}^\gamma(r)$ has been
		recently extensively
		analyzed in \citet{garrappa}.

		\begin{figure}
			\centering
			\includegraphics[scale=.42]{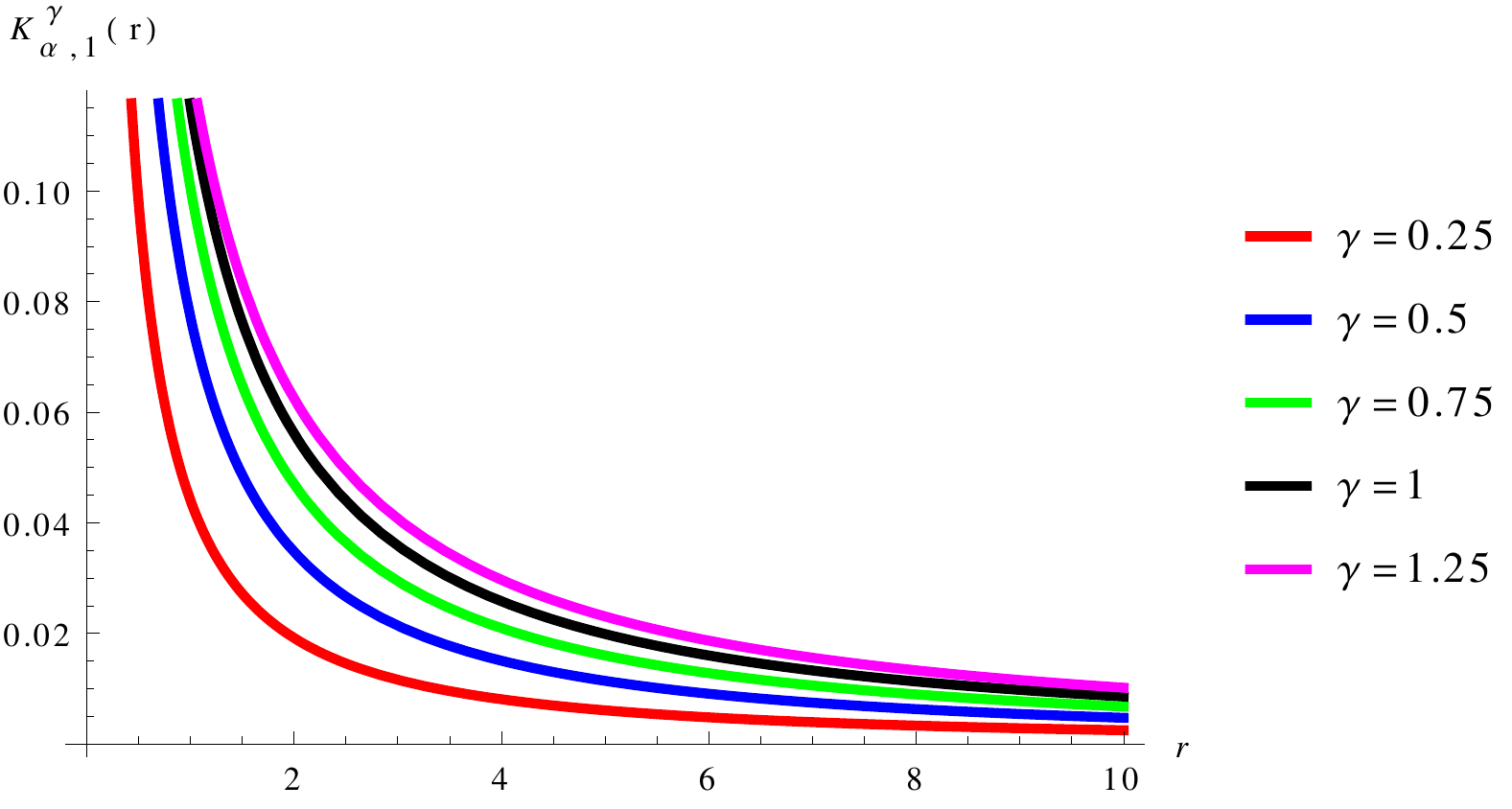}
			\includegraphics[scale=.42]{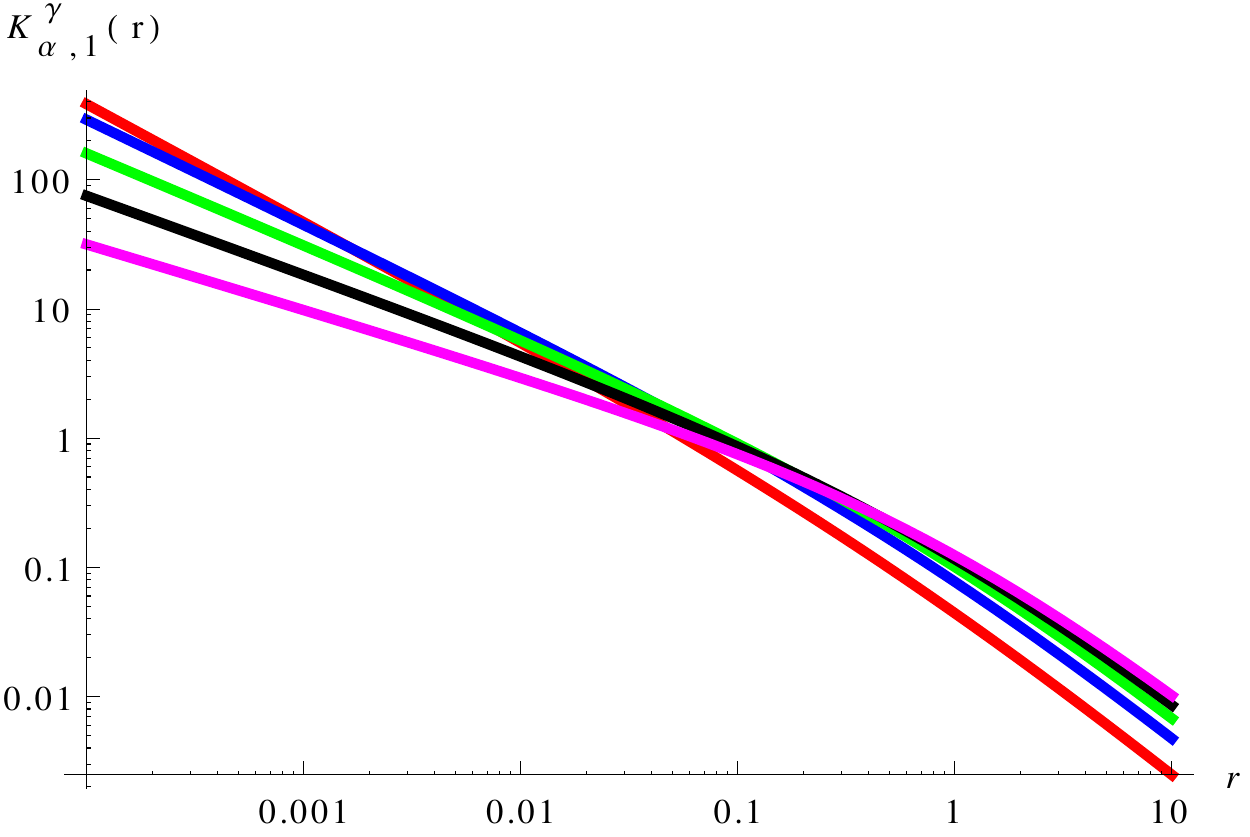}
			\caption{\label{bababa}Plot (left) and log-log-plot (right)
			of the function \eqref{bababa0} for $\alpha=0.4$ and $\gamma=(0.25,0.5,0.75,1,1.25)$.}
		\end{figure}

		\begin{figure}
			\centering
			\includegraphics[scale=.42]{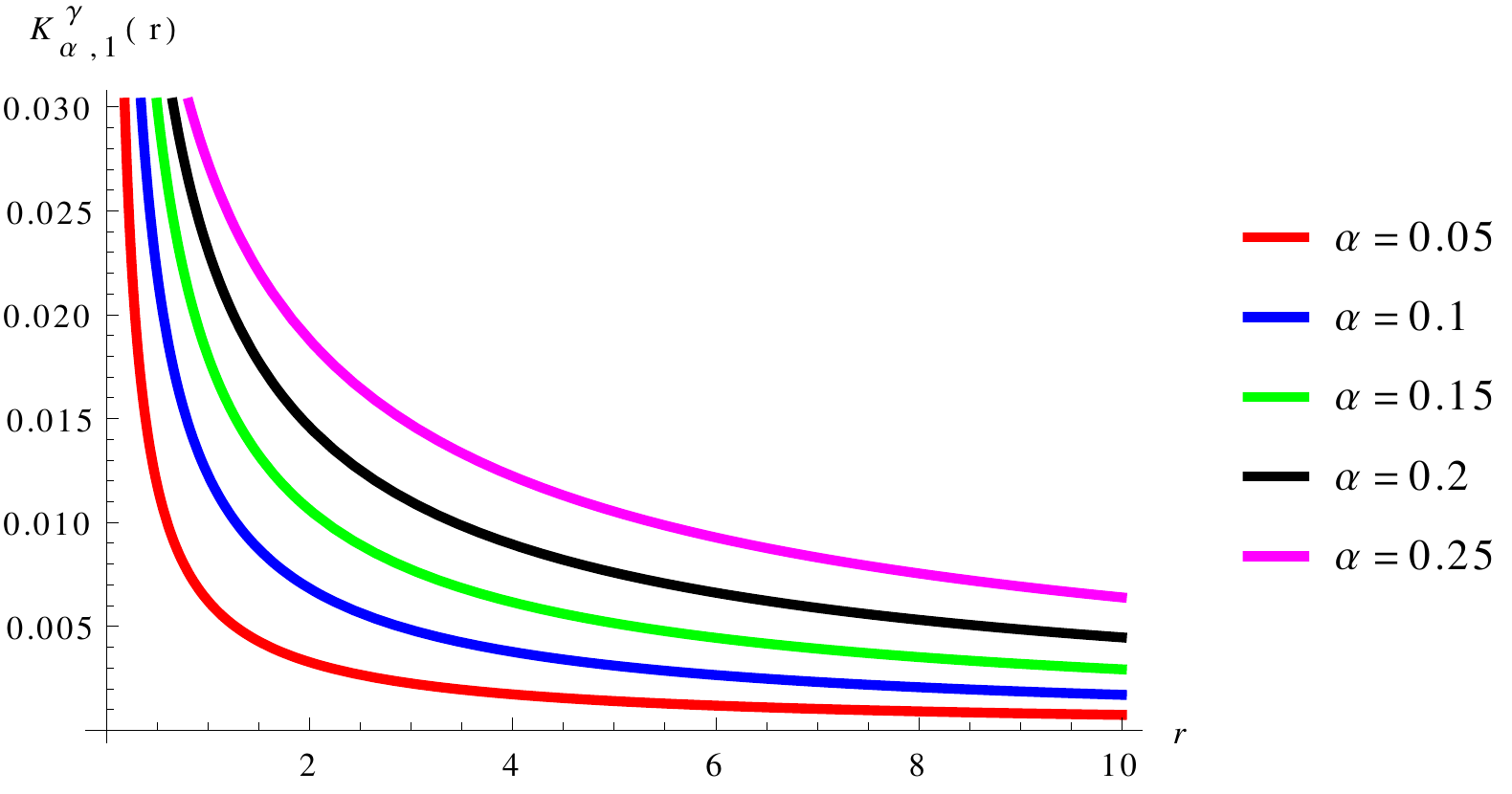}
			\includegraphics[scale=.42]{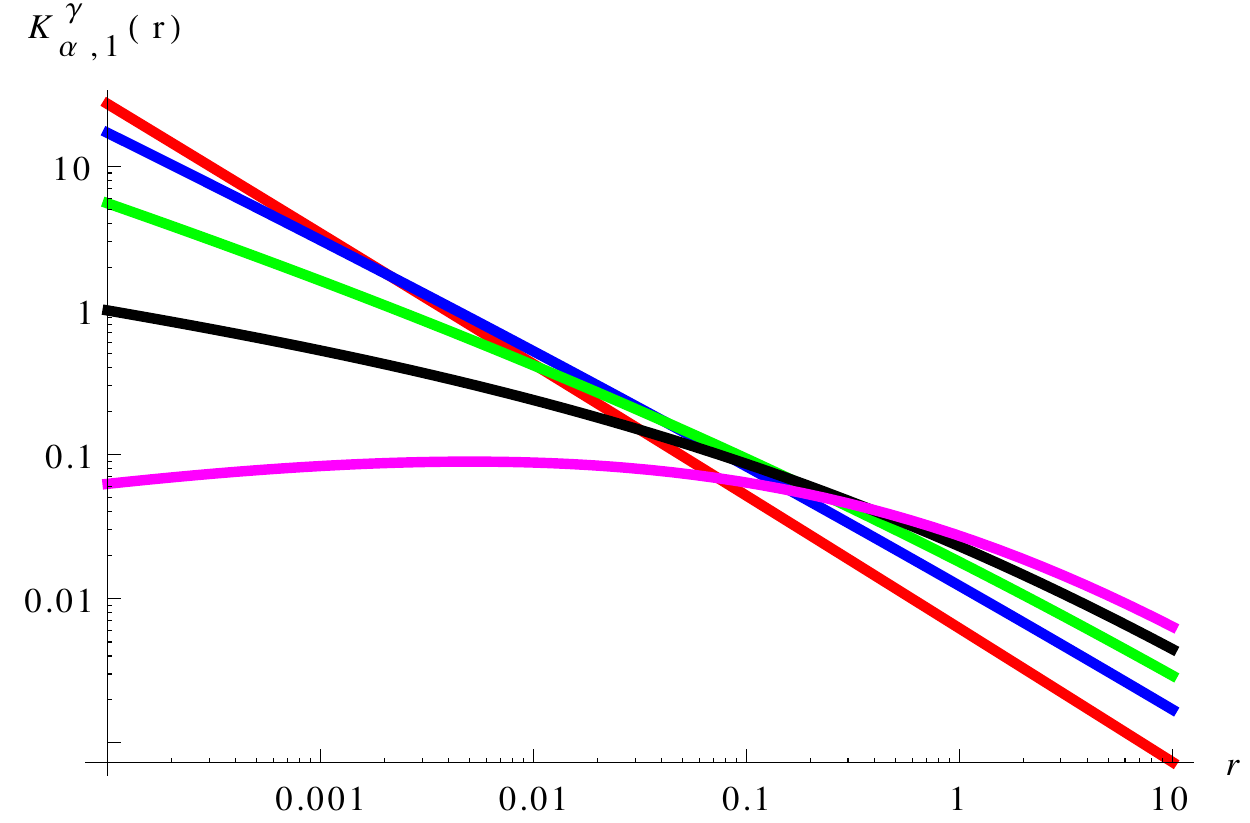}
			\caption{\label{bababa2}Plot (left) and log-log-plot (right)
			of the function \eqref{bababa0} for $\gamma=4$ and $\alpha=(0.05,0.1,0.15,0.2,0.25)$.}
		\end{figure}			

		\begin{figure}
			\centering
			\includegraphics[scale=.42]{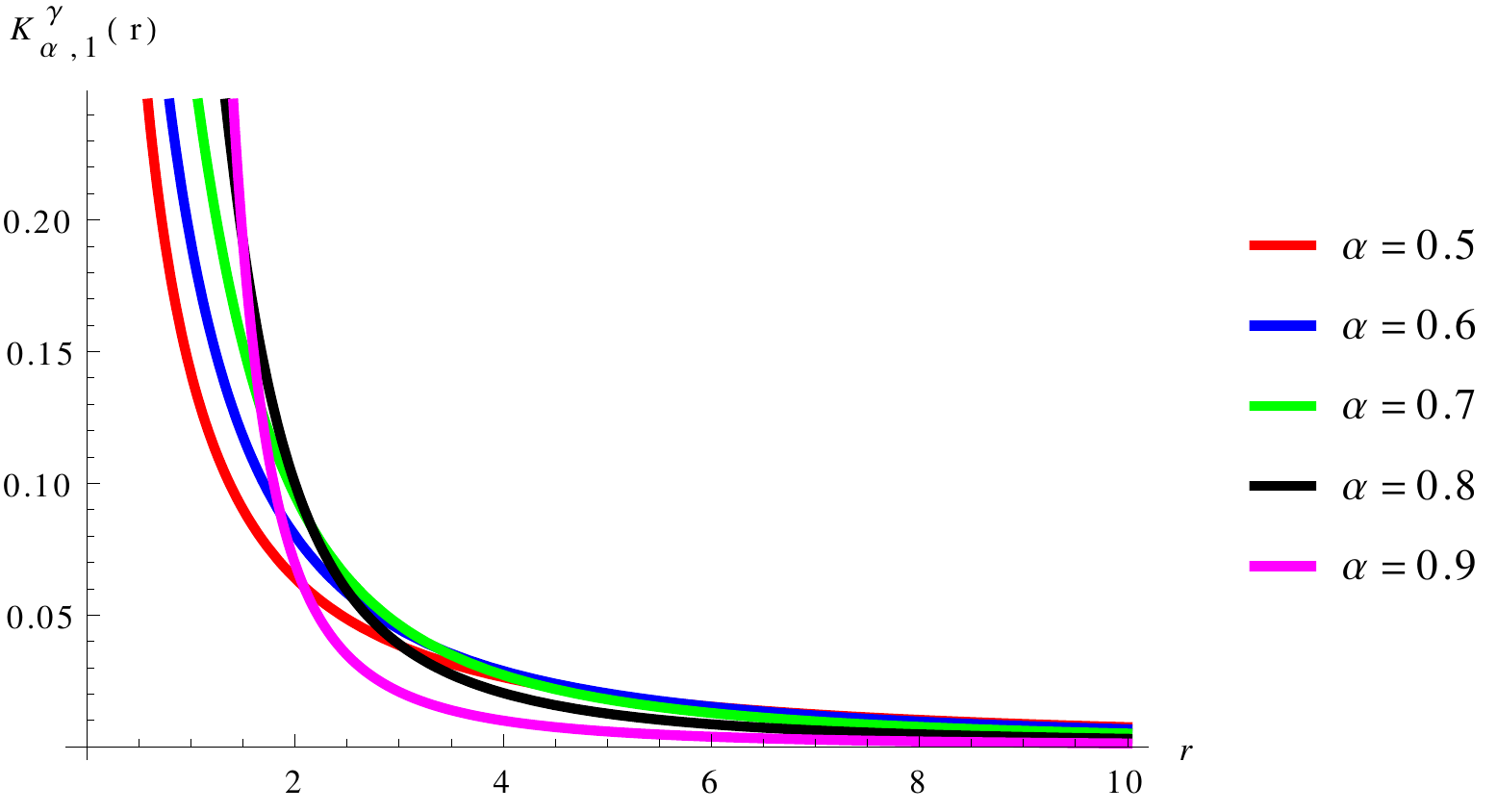}
			\includegraphics[scale=.42]{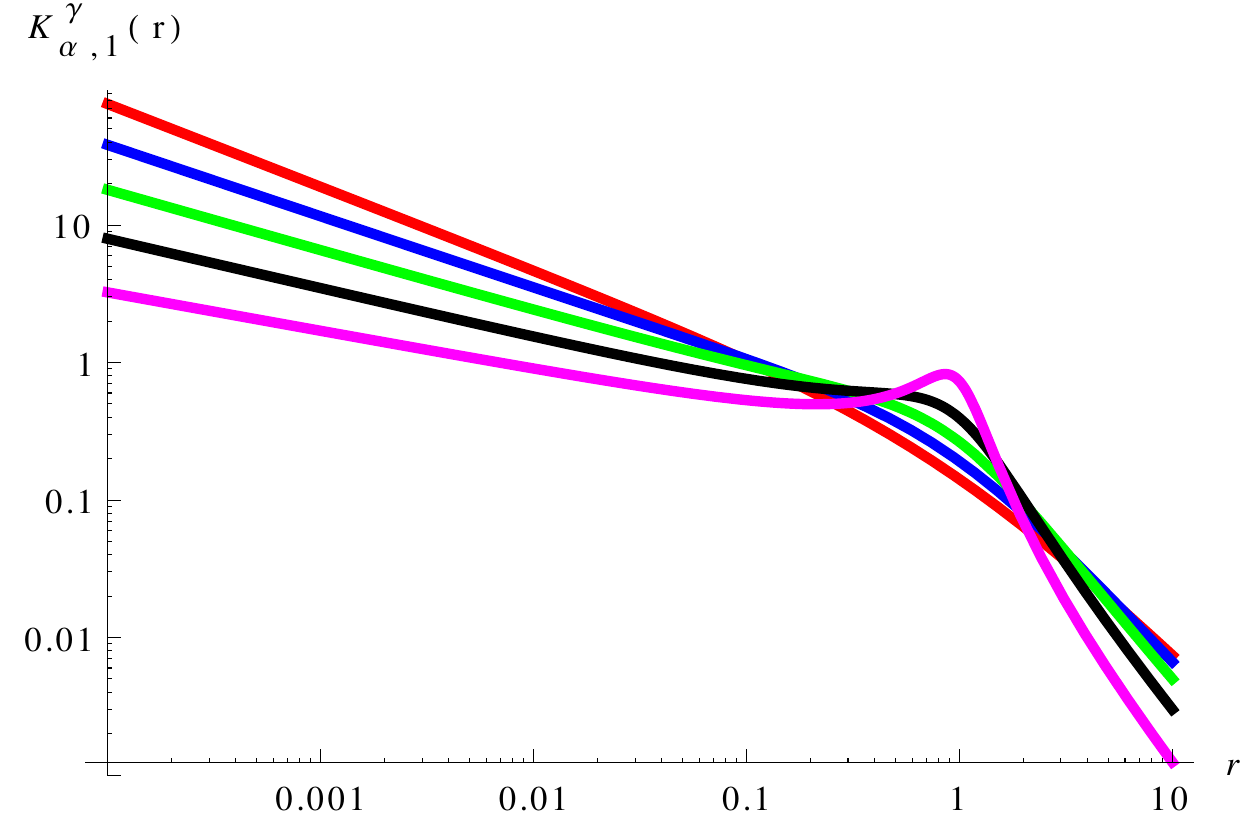}
			\caption{\label{bababa3}Plot (left) and log-log-plot (right)
			of the function \eqref{bababa0} for $\gamma=0.8$ and $\alpha=(0.5,0.6,0.7,0.8,0.9)$.}
		\end{figure}			

		We conclude by emphasizing that, if $ \alpha \in (0,1]$, $0< \alpha \gamma \le 1$, $r>0$, the kernel
		\begin{align}
			\label{bababa0}	
			K_{\alpha,1}^\gamma(r)
			= \frac{r^{\alpha \gamma -1}}{\pi} \frac{\sin \left( \gamma \arctan \left( \frac{r^\alpha \sin (\pi \alpha)}{
			r^\alpha \cos (\pi \alpha)+1} \right) + \pi (1-\alpha \gamma) \right)}{\left( r^{2\alpha} + 2r^\alpha
			\cos (\pi \alpha) +1 \right)^{\gamma/2}},
		\end{align}
		is the density
		of a probability measure concentrated on the positive real line (see Figures \ref{bababa}, \ref{bababa2} and \ref{bababa3}).

		\subsection*{Acknowledgements}
		
			\v{Z}ivorad Tomovski is supported by the European Commission and
			the Croatian Ministry of Science, Education and Sports Co-Financing Agreement
			No.\ 291823. In particular, \v{Z}ivorad Tomovski acknowledges the Marie Curie project FP7-PEOPLE-2011-COFUND
			program NEWFELPRO Grant Agreement No. 37 -- Anomalous diffusion.

	\bibliographystyle{abbrvnat}	
	\bibliography{paper}


\end{document}